\documentclass[11pt]{amsart}
\usepackage{amssymb,latexsym,graphicx, amscd}
\usepackage[T1]{fontenc}
\usepackage{float}
\newtheorem{theorem}{Theorem}[section]
\newtheorem{prop}[theorem]{Proposition}
\newtheorem{lemma}[theorem]{Lemma}
\newtheorem{remark}[theorem]{Remark}
\newtheorem{question}[theorem]{Question}
\newtheorem{definition}[theorem]{Definition}
\newtheorem{cor}[theorem]{Corollary}

\newcommand\R{{\mathbb R}}

\newcommand{\CPb}{\overline{\mathbb{CP}^2}}
\newcommand{\CP}{{\mathbb{CP}}^{2}} 

\begin{document}
\title[Fundamental group]{The fundamental group of symplectic $4$-manifolds with  $b^+=1$}

\author[A. Akhmedov]{Anar Akhmedov}
\address{School of Mathematics, University of Minnesota \newline
\hspace*{.375in} Minneapolis, MN, 55455, USA}
\email{akhmedov@umn.edu}

\author[W. Zhang]{Weiyi Zhang}
\address{Mathematics Institute, University of Warwick \newline
\hspace*{.375in} Coventry, CV4 7AL, UK}
\email{weiyi.zhang@warwick.ac.uk}

\date{February 22, 2012. Revised on August 20, 2015.}

\subjclass[2000]{Primary 57R55, 57R57}

\begin{abstract}
In this article we apply the technique of Luttinger surgery to study the complexity of the fundamental group of symplectic $4$-manifolds with holomorphic Euler number $\chi_h=1$. We discuss the topology of symplectic $4$-manifolds with $b^+=1$ and provide various constructions of symplectic $4$-manifolds with $b^+=1$ and prescribed $c_1^2$.
\end{abstract}

\maketitle

\section{Introduction}

Symplectic $4$-manifolds with $b^+=1$ play a very important role in the theory of smooth $4$-manifolds. The first compact example of homeomorphic but non-diffeomorphic orientable $4$-manifolds (Dolgachev surface $E(1)_{2, 3}$ vs rational surface $\CP\#9\CPb$) have $b^+=1$. This was proved by Donaldson in the ground-breaking article \cite{Do} using $SU(2)$ gauge theory. Nowadays, it is still an active and challenging problem to construct symplectic $4$-manifolds which are homeomorphic but not diffemorphic to small rational surfaces. A concise history of constructing exotic smooth structures on simply-connected $4$-manifolds with $b^+=1$ can be found in the introductions of \cite{AP1, AP}. For recent progress on this problem we refer the reader to (\cite{Ak1, AP1, AP}), where the first exotic minimal symplectic $\CP\#k\CPb$ (for $2 \leq k \leq 5$) were constructed. The small exotic symplectic $4$-manifolds with $b^+=1$ also play very important role in understanding the geography of symplectic $4$-manifolds. Many interesting applications of the symplectic $4$-manifolds with $b^+=1$ to the geography of simply connected non-spin and spin symplectic $4$-manifolds can be found in \cite{Gom, ABBKP, AP, AP2} and the references therein. It is also worth mentioning the work in \cite{IKRT}, where the restriction on the fundamental group of the aspherical symplectic $4$-manifolds has been studied by Ib\'a\~nez-K\k{e}dra-Rudyak-Tralle.    

The wall crossing phenomenon of Seiberg-Witten invariant along with Taubes' SW=Gr does provide us abundant $J$-holomorphic curves on symplectic $4$-manifolds with $b^+=1$. This fact is the cornerstone for many similarities between these symplectic manifolds and algebraic surfaces. Especially, this holds true for many properties involving linear systems or family of curves in a given homology class.  

A K\"ahler surface with $b^+=1$ has the geometric genus $p_g=h^{0,2}=h^{2,0}=0$. By the Kodaira embedding theorem, any such surface has to be an algebraic one. Besides several similarities with algebraic surfaces from the geometric aspects stated above, the topology of symplectic $4$-manifolds with $b^+=1$ is quite different from that of algebraic surfaces with $p_g=0$. For example, it is expected that there are infinitely many minimal symplectic manifolds which are homeomorphic but not diffeomorphic to $\CP\# k\CPb$ with $k\ge 2$. But on the other hand, there are only finitely many such algebraic surfaces since the moduli space is a quasi-projective variety. 
 
One of the main goals of the current article is to demonstrate the huge difference between the topology of symplectic $4$-manifolds with $b^+=1$ and that of algebraic surfaces. In this paper, we are mainly interested in comparing the fundamental groups that occur in these two categories, rather than simply connected $4$-manifolds. 

In a seminal paper, published in the Annals of Mathematics in 1995, Gompf \cite{Gom} showed that any finitely presented group $G$ is the fundamental group of some symplectic $4$-manifold. One drawback of the construction is that the symplectic $4$-manifolds in \cite{Gom} are too large. It is then very natural to ask the ``smallest size'' of such symplectic $4$-manifolds with a given fundamental group $G$. Since the first Betti number $b_1(G)$ is determined by the group $G$ as the rank of $\mathbb R\otimes (G/[G, G])$, a natural quantity to describe the size is $b^+$. Our first result is the following


\begin{theorem} \label{main}

For any finitely presented group $G$ which is dual finite torsion, there is a finitely presented group $G'$ with surjective group homomorphism $G'\twoheadrightarrow G$, such that there is a minimal symplectic $4$-manifolds $X(G)$ with $b^+(X(G))=b_1(G)+1$, $\pi_1(X(G))=G'$ and $c_1^2(X(G))=0$. Moreover, $b_1(G)=b_1(G')$.
\end{theorem}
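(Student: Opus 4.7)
The plan is to apply the Luttinger-surgery technique, in the spirit of Baldridge--Kirk and Akhmedov--Park, to a carefully chosen minimal symplectic building block. Fix a finite presentation $G = \langle x_1,\ldots,x_n \mid r_1,\ldots,r_m\rangle$, write $b := b_1(G)$, and take as building block $Y = E(1)\#_{T^2}(\Sigma_g\times T^2)$, the symplectic fiber sum along a regular elliptic fiber in $E(1) = \CP\#9\CPb$ and a torus factor in $\Sigma_g\times T^2$, with $g$ chosen large enough that the $2g$ surface generators suffice to surject onto $G$. Standard fiber-sum formulas yield $c_1^2(Y)=0$, $b^+(Y)=2g+1$, and $b_1(Y)=2g$, while a Seifert--van Kampen computation identifies $\pi_1(Y)$ with the free group $F_{2g}$ on the surface generators (the $\pi_1(T^2)$-factor is killed by the simply-connected $E(1)$-side of the fiber sum). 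In particular $Y$ realizes the equation $b^+=b_1+1$ on the nose, and it contains a rich supply of Lagrangian tori of product form $\gamma\times\alpha$ from the $\Sigma_g\times T^2$ summand together with rim tori from the fiber sum, all available for surgery.

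Next, I would perform a two-stage sequence of Luttinger surgeries. The first stage consists of $2g-b$ surgeries on homologically essential Lagrangian tori, each of which decreases $b_1$ and $b^+$ by one simultaneously; after this stage $b_1=b$ and $b^+=b+1$, $c_1^2$ remains zero, and $\pi_1$ has been quotiented by the relations forced by the meridians. The second stage consists of further surgeries on \emph{nullhomologous} Lagrangian tori, each of which imposes a relation of Baldridge--Kirk form (``meridian equals a commutator of push-offs'') on $\pi_1$ without altering $b_1$, $b^+$, or $c_1^2$; these are used to impose the relators $r_1,\ldots,r_m$. The dual finite torsion hypothesis on $G$ enters precisely here: it guarantees that each $r_i$ can be traced on a Lagrangian torus whose homology class vanishes in $H_2(Y;\Z)$, so the relation-imposing surgeries are homologically free. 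Call the resulting symplectic $4$-manifold $X(G)$ and set $G':=\pi_1(X(G))$.

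To finish, I would verify the four conclusions of the theorem. The homomorphism sending the surviving surface generators of $\pi_1(Y)$ to a chosen generating tuple of $G$ satisfying the surface-type relations descends to a surjection $G'\twoheadrightarrow G$, because the Luttinger relations include lifts of $r_1,\ldots,r_m$ by construction. The equality $b_1(G')=b_1(G)$ follows from $b_1(X(G))=b$ and the fact that the second-stage surgeries are homologically trivial. Minimality of $X(G)$ is inherited from minimality of $Y$ (both $E(1)$ and $\Sigma_g\times T^2$ are minimal, and the symplectic fiber sum along a genus-one surface is minimal when the summands are and the gluing tori are nontrivial in homology) together with Usher's theorem that Luttinger surgery preserves minimality.

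The main obstacle is the second stage of surgeries: one must simultaneously realize each relator $r_i$ as a word on loops in a Lagrangian torus \emph{and} force that torus to be nullhomologous, in order to protect $b^+$ from dropping below $b+1$. Generic product tori $\gamma\times\alpha$ are homologically essential in $\Sigma_g\times T^2$, so the required homologically free Lagrangian geometry is not automatic; it must be arranged using the rim tori coming from the fiber sum and cancellation moves on product tori, and it is precisely the dual finite torsion hypothesis on $G$ that makes this possible for all the $r_i$ simultaneously. The remaining bookkeeping---confirming that the homomorphism $G'\to G$ is genuinely surjective rather than factoring through a proper quotient, and that no parasitic generators survive the two stages---is a careful but standard application of Seifert--van Kampen over the Luttinger surgery decomposition.
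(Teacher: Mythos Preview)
Your overall architecture---take $\Sigma_g\times T^2$, fiber-sum with $E(1)$ along $\{pt\}\times T^2$, then perform Luttinger surgeries---is precisely the framework the paper uses. But two genuine gaps remain, and one of them is the heart of the matter.

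First, a small correction: $\pi_1(Y)$ is not the free group $F_{2g}$ but the surface group $\pi_1(\Sigma_g)$. The meridian of $\{pt\}\times T^2$ in $\Sigma_g\times T^2$ is the product $\prod_j[a_j,b_j]$; when you glue in the simply-connected piece $E(1)\setminus\nu F$ this meridian dies, so the surface relation is imposed. This is not fatal, but the later bookkeeping is different from what you wrote.

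Second, and this is the real gap: you have not explained how to put a relator $r_i$ onto an embedded Lagrangian torus at all. A relator is a word in the surface generators, hence an \emph{immersed} curve $\gamma_i\subset\Sigma_g$, and $\gamma_i\times c'$ is then not an embedded torus and cannot be Luttinger-surgered. The paper resolves this with a \emph{bridge move}: at each self-intersection of $\gamma_i$ one attaches a handle to the surface (raising the genus from $k$ to some $k'$) so that $\gamma_i$ lifts to an embedded curve $\gamma_i'\subset\Sigma_{k'}$. Only then does one perform the surgery $(\gamma_i'\times c''',\gamma_i',-1)$. Each bridge introduces two new generators $a_g,b_g$, which must themselves be controlled by further surgeries. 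Your proposal contains no mechanism to pass from immersed relator curves to embedded ones.

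This feeds into a misidentification of where the dual-finite-torsion hypothesis enters. It is \emph{not} used to make the relator tori nullhomologous. After the bridge moves, $\pi_1(X(G))$ acquires $m$ extra generators $\gamma_1',\ldots,\gamma_m'$, and the surgery relations (once $c=d=1$) give, in abelianization, $\gamma_1'^{\,m_{i1}}\cdots\gamma_m'^{\,m_{im}}=1$ for $i=1,\ldots,k$. The rank-$m$ condition on $(m_{ij})$ is exactly what forces every $\gamma_j'$ to be torsion in $H_1$, so that $b_1(G')=b_1(G)$ and hence $b^+(X(G))=b_1(G)+1$. Without it, the bridge-move generators could contribute to $b_1$ and destroy the desired equality. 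Your claim that dual finite torsion ``guarantees that each $r_i$ can be traced on a Lagrangian torus whose homology class vanishes'' is not what the hypothesis does, and no argument for it is given.
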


Here we call a finitely presented group $G = \langle x_{1}, \ \cdots, \ x_{k} \ | \ l_{1}, \ \cdots, \ l_{m} \rangle$ is {\it dual finite torsion} if we write the relators $l_i$, after abelianization, as $x_1^{m_{1i}}\cdots x_k^{m_{ki}}$, then the matrix $(m_{ij})_{1\le i\le k, 1\le j\le m}$ has rank $m$.



For most $G$, the smallest value of $b^+$ that is known to occur in symplectic $4$-manifolds with $\pi_1=G$ is $b_1(G)+1$. See later discussion for details.

By applying our Theorem~\ref{main} to the symplectic $4$-manifolds with $b^+=1$, we deduce the following corollary


\begin{cor} \label{fund_b+=1}
Given any finitely presented group $G$ which is dual finite torsion with $b_1(G)=0$ there is a minimal symplectic $4$-manifold with $b^+=1$, $c_1^2=0$ and $\pi_1=G'$ where $G'$ surjects onto $G$ and $b_1(G)=b_1(G')$.
\end{cor}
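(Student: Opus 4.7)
The plan is to simply invoke Theorem~\ref{main}, since Corollary~\ref{fund_b+=1} is essentially the specialization of that theorem to the case $b_1(G)=0$. The entire content of the corollary comes for free once the main theorem is established, so the proof should be just a few lines long.

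In more detail, I would start by observing that the hypothesis of Theorem~\ref{main} is satisfied: $G$ is finitely presented and dual finite torsion. Applying the theorem produces a finitely presented group $G'$ together with a surjective homomorphism $G' \twoheadrightarrow G$ and a minimal symplectic $4$-manifold $X(G)$ with
\[
b^+(X(G)) = b_1(G) + 1, \quad c_1^2(X(G)) = 0, \quad \pi_1(X(G)) = G', \quad b_1(G') = b_1(G).
\]
Then I would substitute the hypothesis $b_1(G)=0$ into the first equation to conclude $b^+(X(G)) = 1$, which is exactly the conclusion claimed. All other properties (minimality, $c_1^2=0$, the surjection $G' \twoheadrightarrow G$, and the equality $b_1(G')=b_1(G)$, which now reads $b_1(G')=0$) are inherited directly from the statement of Theorem~\ref{main}.

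Since this is a direct corollary, there is essentially no obstacle to overcome at this stage: the entire technical difficulty is hidden in the proof of Theorem~\ref{main} itself (which presumably uses Luttinger surgery on carefully chosen building blocks to kill the appropriate homology classes while imposing the relators of $G$). The only thing worth remarking on in the proof is why one cannot hope to replace $G'$ by $G$ outright, namely that the Luttinger surgery construction typically introduces auxiliary generators corresponding to meridians of the surgered tori, and these generators may not be killable without changing $b^+$ or $c_1^2$; hence the surjection $G' \twoheadrightarrow G$ rather than an isomorphism.
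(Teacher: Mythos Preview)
Your proposal is correct and matches the paper's approach exactly: the corollary is stated immediately after Theorem~\ref{main} as its direct specialization to the case $b_1(G)=0$, with no further argument given. Your final paragraph about why $G'$ need not equal $G$ is accurate extra commentary, but the formal proof is indeed just the substitution you describe.
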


The groups $G$ with $b_1(G)=0$ are often called $\mathbb Q$-perfect in the literature. 

Most of the symplectic $4$-manifolds that we construct in this paper are non-K\"ahler. Any minimal complex surface with $c_1^2=0$ is an elliptic surface. When $b^+=\chi_h=1$, the fundamental groups are the $2$-orbifold fundamental groups $E_{p_1, p_2, \cdots, p_k}=\{x_1, x_2, \cdots, x_k|x_1^{p_1}=x_2^{p_2}=\cdots=x_k^{p_k}=x_1x_2\cdots x_k=1\}$ (see \cite{FM} for details). While in our construction, the most groups are not of this type. It is also interesting to notice that  $E_{p_1, p_2, \cdots, p_k}$ does not surject to many groups, e.g. the congruence subgroup $K_{3, 4}$ (see Remark \ref{orbifoldgroup}).



Theorem \ref{main} and Corollary \ref{fund_b+=1} suggest the following: for any finitely presented group $G$, there is a minimal symplectic $4$-manifold $X$ with $b^+(X)=b_1(G)+1$, $\pi_1(X)=G$ and $c_1^2(X)=0$. In this paper, we verify this when $G$ is a free product of cyclic groups (Theorem \ref{freegroup}) and a slightly more general case (Remark \ref{moregenerators}). The full version will be discussed in a sequel.


 Our next focus are symplectic $4$-manifolds of general type. After constructing minimal symplectic $4$-manifolds with arbitrary first homology group in Section \ref{sec:c}, we build symplectic examples with small size and prescribed $c_1^2$ and abelian fundamental groups.
 
 \begin{theorem}\label{main2}
For any integer $2\le h\le 3$, there exist minimal symplectic $4$-manifolds $X_{p, q}$ and $X_p$ with $p, q\ge 1$ such that
\begin{enumerate}
\item $\pi_1(X_{p,q})=\mathbb Z_p\times \mathbb Z_q$, $b^+(X_{p,q})=1$ and $c_1^2(X_{p,q})=h$; 
\item $\pi_1(X_{p})=\mathbb Z\times \mathbb Z_p$, $b^+(X_p)=2$ and $c_1^2(X_p)=h$.
\end{enumerate}

For any $p, q\ge 1$ with $\gcd(p, 2q)=1$, there exists minimal symplectic $4$-manifold $X_{p, q}$ such that  
$$\pi_1(X_{p,q})=\mathbb Z_p\times \mathbb Z_q, \quad b^+(X_{p,q})=1,\quad c_1^2(X_{p,q})=1.$$
\end{theorem}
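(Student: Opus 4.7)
\medskip

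\noindent\textbf{Proof plan for Theorem~\ref{main2}.}

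The plan is to follow the Akhmedov--Park construction paradigm: assemble a symplectic ``cocktail'' out of a small symplectic building block together with a product-type piece $T^2\times\Sigma_g$ that contributes an abundant supply of geometrically disjoint Lagrangian tori, perform a symplectic fiber sum along a genus $g$ surface to control the homological invariants, and then execute a prescribed sequence of Luttinger surgeries on these Lagrangian tori so as to introduce exactly the relations $x^p=1$ and $y^q=1$ on the fundamental group. First I would fix the target topological numbers: since $b^+=1$ and $b_1=0$ force $\chi=12-c_1^2$, the cases $h=2,3$ amount to engineering a minimal symplectic manifold with $\chi=10$ or $9$ and $\sigma=-6$ or $-5$ respectively; the $b^+=2$ cases in part (2) similarly have prescribed $(\chi,\sigma)$.

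For part (1) I would take $A=T^2\times\Sigma_2$ (respectively a closely related twisted model), which carries the standard four pairs of Lagrangian tori generating $\pi_1(A)=\Z^2\oplus\pi_1(\Sigma_2)$, and fiber sum it symplectically along a genus $2$ surface of square zero with a rational-surface building block $B$ containing a symplectic genus $2$ surface with simply connected complement. A careful Euler characteristic and signature count fixes the number of blow-ups in $B$ so that $c_1^2$ lands on $h\in\{2,3\}$. The Seifert--van Kampen computation together with the usual Lagrangian framing conventions for Luttinger surgery then expresses $\pi_1$ of the fiber sum as the quotient of $\pi_1(A)$ by the vanishing of the normal circles of the summed surface. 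Applying $(+1/p)$ and $(+1/q)$ Luttinger surgeries on two well-chosen Lagrangian tori whose meridians represent the two $\Z$ factors of $\pi_1(A)/[\pi_1(\Sigma_2),\pi_1(\Sigma_2)]$ imposes the cyclic relations $x^p=1$, $y^q=1$; additional $(+1)$ Luttinger surgeries on the remaining tori kill the $\Sigma_2$ generators to force $\pi_1=\Z_p\times\Z_q$. For part (2) I would run exactly the same procedure but omit the surgery that introduces the $\Z_p$ (or $\Z_q$) torsion on one of the circle factors, leaving a free $\Z$; the homology bookkeeping then gives $b^+=2$ rather than $1$.

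For the $c_1^2=1$ case (third statement), one is squeezed into the smaller building block $T^2\times T^2$ fiber summed with an appropriate blow-up of $E(1)$ along a genus $2$ configuration (or, after replacing $T^2\times\Sigma_2$ by a $T^2$-bundle over $T^2$ piece, summed into an even smaller rational model). Here the available Lagrangian tori are fewer and the parities of the Luttinger coefficients interact with a residual involution coming from the fiber sum; I expect this to force a compatibility condition of the form $\gcd(p,2q)=1$ so that the surgery relations simultaneously produce $\Z_p$ and $\Z_q$ without collapsing into a common factor of $2$. Finally, minimality in all three parts follows from Usher's theorem, since each manifold is obtained from a symplectic fiber sum along a surface of genus $\ge 1$ (and neither summand is a rational/ruled blow-up of the summed surface's neighborhood) followed by Luttinger surgery, which preserves minimality.

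The main obstacle I expect is the fundamental group calculation: after the fiber sum, the relations introduced by the vanishing meridians, the surface relator of $\Sigma_g$, and the Luttinger relators interact non-trivially, and one must verify that the resulting presentation collapses precisely to $\Z_p\times\Z_q$ (or $\Z\times\Z_p$) rather than a larger quotient. Careful use of the base points, parallel transports, and the explicit Luttinger framing formulas of Auroux--Donaldson--Katzarkov is needed to ensure that no extra generators survive. The delicate $\gcd(p,2q)=1$ hypothesis in the third statement is precisely the algebraic trace of this collapse going through.
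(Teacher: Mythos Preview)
Your overall paradigm---fiber sum plus Luttinger surgery, with minimality via Usher---is correct for the $h=2,3$ cases, but your choice of building blocks differs from the paper's and some of your assumptions are off. The paper does \emph{not} use $T^2\times\Sigma_2$; instead it starts from $\mathbb{T}^4\#2\overline{\mathbb{CP}^2}$ containing a square-zero genus $2$ surface $\hat\Sigma_2$, performs exactly \emph{two} Luttinger surgeries $(a'\times c',a',-1/p)$ and $(b'\times c'',b',-1/q)$ to obtain $Y(1/p,1/q)$, and then fiber-sums with $Y_1=(\mathbb{T}^2\times\mathbb{S}^2)\#4\overline{\mathbb{CP}^2}$ (for $c_1^2=2$) or $Y_2=(\mathbb{T}^2\times\mathbb{S}^2)\#3\overline{\mathbb{CP}^2}$ (for $c_1^2=3$) along genus $2$ surfaces. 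Crucially, the complements in the $Y_i$ are \emph{not} simply connected---they have $\pi_1\cong\mathbb{Z}^2$---and the whole point of the gluing diffeomorphism is to send $a,b,c,d$ to $a_1,b_1,a_1^{-1},b_1^{-1}$, so that the relations $ac=1$, $bd=1$ collapse everything to $\mathbb{Z}_p\times\mathbb{Z}_q$. No extra $(+1)$ surgeries killing $\Sigma_2$ generators are needed. Your $b^+=2$ idea (omit one surgery) does match the paper.

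The real gap is in the $c_1^2=1$ case. Here the paper uses \emph{no Luttinger surgery at all}: it fiber-sums $X_1=(\mathbb{T}^2\times\mathbb{S}^2)\#4\overline{\mathbb{CP}^2}$ with $X_2=(\mathbb{T}^2\times\mathbb{S}^2)\#3\overline{\mathbb{CP}^2}$ along genus $2$ surfaces, and the entire fundamental-group control comes from the choice of gluing diffeomorphism on $\Sigma_2$. Concretely, one sends $a_1\mapsto a_2^ab_2$, $b_1\mapsto a_2^{ad-1}b_2^d$, $c_1\mapsto c_2^{2a+p}d_2$, $d_1\mapsto c_2^{2ad-2}d_2^{d-q}$, and checks via Seifert--Van Kampen that this produces the relations $x^p=1$, $y^q=1$. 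The hypothesis $\gcd(p,2q)=1$ is exactly what is needed to solve the Diophantine equation $dp-2aq=pq-1$ for integers $a,d$ so that this gluing map is an orientation-preserving diffeomorphism; it has nothing to do with ``parities of Luttinger coefficients'' or a ``residual involution.'' Your proposed route through $T^4$ fiber-summed with a blow-up of $E(1)$ does not hit $c_1^2=1$ (a torus fiber sum of $T^4$ with $E(1)$ has $c_1^2=0$, and blowing up only decreases it), so you would need a genuinely different construction there.
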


On the other hand, for algebraic surfaces with $c_1^2=1$ (resp. $c_1^2=2$) and $p_g=0$, the algebraic fundamental group has to be finite, and could only have order at most $5$ (resp. $9$). 

The main surgical technique that we employ in our constructions is Luttinger surgery \cite{Lu}. It has been effectively used recently to construct symplectic $4$-manifolds \cite{AP, ABP, BK, FPS, ABBKP} (see also more recent articles \cite{T1, T2, AO} where various interesting constructions are presented). Luttinger surgery introduces the commutator relation to the fundamental group in our examples, and thus it usually changes the fundamental group of the original $4$-manifold. In fact, by carefully choosing the surgery coefficients and performing the fiber summing with other building blocks, we construct symplectic $4$-manifolds with various fundamental groups.

Our article is organized as follows. We first begin by comparing several topological aspects of symplectic $4$-manifolds and algebraic surfaces. We mainly focus on $4$-manifolds with $b^+=1$, although there are discussions on the size of symplectic manifolds with given fundamental group in Section \ref{invariants} and related results as in Proposition \ref{sympNori}. Next, we apply Luttinger surgeries and symplectic fiber sum along $\mathbb{T}^2$ to prove Theorem \ref{main} and also construct other examples with $c_1^2>0$. In the final section, we construct symplectic $4$-manifolds using both symplectic fiber sum and rational blowdown surgeries. This provides an alternative way to obtain symplectic $4$-manifolds with $c_1^2>0$ and cyclic fundamental groups.

\section{Topology of symplectic $4$-manifolds and algebraic surfaces with $b^+=1$}

In this section, we study the topology of symplectic $4$-manifolds with $b^+=1$ and algebraic surfaces with $p_g=0$. Our main focus will be the fundamental group.

\subsection{Complex and Symplectic Kodaira Dimensions}\label{Kodaira}

The notion of Kodaira dimension has been introduced by K. Kodaira for complex manifolds. It has played a fundamental role  
in the classification theory of complex surfaces. 

If the manifold $X$ admits a complex structure $J$, then the
Kodaira dimension is defined as follows (in the case dim$_\R
X=4$):  The n-th plurigenus $P_n(X,J)$ of a complex manifold is defined
by $P_n(X,J)=h^0({\mathcal K}_J^{\otimes n})$, with ${\mathcal
K}_J$ the canonical bundle of $(X,J)$.  

\begin{definition}The complex Kodaira dimension $\kappa^{h}(X,J)$ of a complex surface $X$ is defined as

\[
\kappa^{h}(X,J)=\left\{\begin{array}{cl} 
-\infty &\hbox{ if $P_n(X,J)=0$ for all $n\ge 1$},\\
0& \hbox{ if $P_n(X,J)\in \{0,1\}$, but $\not\equiv 0$ for all $n\ge 1$},\\
1& \hbox{ if $P_n(X,J)\sim cn$; $c>0$},\\
2& \hbox{ if $P_n(X,J)\sim cn^2$; $c>0$}.\\
\end{array}\right.
\]

\end{definition}

For symplectic $4$-manifolds, there is also a notion of symplectic Kodaira 
dimension (see \cite{li, McDuffS, LeBrun}). 

\begin{definition} \label{sym Kod}
For a minimal symplectic $4$-manifold $(X^4,\omega)$ with symplectic
canonical class $K_{\omega}$,   the Kodaira dimension of
$(X^4,\omega)$ is defined in the following way:

$$
\kappa^s(X^4,\omega)=\begin{cases} \begin{array}{lll}
-\infty & \hbox{ if $K_{\omega}\cdot [\omega]<0$ or} & K_{\omega}\cdot K_{\omega}<0,\\
0& \hbox{ if $K_{\omega}\cdot [\omega]=0$ and} & K_{\omega}\cdot K_{\omega}=0,\\
1& \hbox{ if $K_{\omega}\cdot [\omega]> 0$ and} & K_{\omega}\cdot K_{\omega}=0,\\
2& \hbox{ if $K_{\omega}\cdot [\omega]>0$ and} & K_{\omega}\cdot K_{\omega}>0.\\
\end{array}
\end{cases}
$$

If $(X^4,\omega)$ is not minimal, its Kodaira dimension is defined to be that of any of its minimal models. 
\end{definition}

It is proved in \cite{li} that the symplectic Kodaira dimension is a diffeomorphism invariant. 
Also, it was shown in \cite{DZ} that the symplectic Kodaira dimension coincides with the complex Kodaira dimension 
when both are defined.

It is proved by McDuff that all symplectic $4$-manifolds with $\kappa^s=-\infty$ has to be (diffeomorphic to) a rational or ruled surface. Thus, all of them have $b^+=1$ and could be endowed with K\"ahler structure. 

All the other minimal symplectic $4$-manifolds have $c_1^2=K^2\ge 0$. Using this inequality, we obtain several topological restrictions for such symplectic $4$-manifolds. Since $c_1^2(X)=2e(X)+3\sigma(X)\ge 0$, we have $2(2-2b_1+b^++b^-)+3(b^+-b^-)\ge 0$. Here $e(X)$ and $\sigma(X)$ are Euler number and signature of $X$ respectively. In summary, when $b^+=1$, we have $4b_1+b^-\le 9$. Since $b^+=1$ and $b^+-b_1$ has to be odd, $b_1=0$ or $2$. In other word, we either have
\begin{enumerate}
\item $b_1=0$, $b^-=0, 1, \cdots, 9$; or
\item $b_1=2$, $b^-=0$ or $1$.
\end{enumerate}

\subsection{Algebraic surfaces}

Algebraic surfaces with $p_g=0$ are of Class I in Kodaira's classification list. They achieve all possible Kodaira dimensions. Let us assume that the algebraic surfaces are minimal. 

On an algebraic surface $S$, the geometric genus $p_g:=P_1:=h^0(S, K_S)$, along with the irregularity $q:=h^1(\mathcal O_S)$ determines the holomorphic Euler characteristic $\chi_h(S):=\chi(\mathcal O_S):=1-q+p_g$. 

The case $b_1=2$, $b^-=0$ cannot occur for algebraic surfaces with $p_g=0$, since it violates the Bogolomov-Miyaoka-Yau inequality $c_1^2\le 3c_2$ for surfaces of general type. All the rest have algebraic examplars. 

For the other possibility in case (2), i.e. when $b_1=2$, $b^-=1$, a surface could be an $\mathbb S^2$ bundle over $\mathbb T^2$ which has Kodaira dimension $-\infty$, or a hyperelliptic surface which has Kodaira dimension zero. 

For case (1), if $b^-=9$ and minimal, it has to be an elliptic surface. For example, it could be a Dolgachev surface, which is simply connected and has Kodaira dimension one. It also could be Enriques surface, which has Kodaira dimension zero and $\pi_1=\mathbb Z_2$. More generally, the following fundamental groups are realized by $E(1)_{p_1, p_2, \cdots, p_k}$ ($k$ logarithmic transformations with coefficients $p_1, \cdots, p_k$ performed on elliptic $E(1)$): 
$$\pi_1(E(1)_{p_1, p_2, \cdots, p_k})=\{x_1, x_2, \cdots, x_k|x_1^{p_1}=x_2^{p_2}=\cdots=x_k^{p_k}=x_1x_2\cdots x_k=1\}.$$

We denote these groups by $E_{p_1, p_2, \cdots, p_k}$. It is finite only if $k\le 3$. All the finite groups in this family are subgroups of $SO(3)$:  the cyclic groups $\mathbb Z_m$, dihedral groups $D_m$, $A_4$, $S_4$ and $S_5$, corresponding to $(mp, mq)$ with $(p, q)=1$, $(2,2,m)$, $(2,3,3)$, $(2,3,4)$ and $(2,3,5)$ respectively. It is worth mentioning that $E(1)_{2, 2}$ is an Enriques surface and $E(1)_{p, q}$, with $(p, q)=1$, are Dolgachev surfaces.

For the rest of case (1), i.e. $b^-=0, 1, \cdots, 8$, if the surface is not $\CP$ nor $\mathbb S^2\times \mathbb S^2$, it has to be a surface of general type. It is known that the number of irreducible components for these surfaces is finite. This is a corollary of the following theorem of Gieseker \cite{Gie}.

\begin{theorem}\label{Gie}
For every pair of integers $x$, $y$, the moduli space $\mathcal M_{x, y}$ of canonical surfaces with numerical invariants $K^2=x$ and $\chi_h=y$ is a (possibly empty) quasiprojective scheme and then it has a finite number of components. 
\end{theorem}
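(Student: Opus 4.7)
The plan is to realize $\mathcal{M}_{x,y}$ as a GIT quotient of a locally closed subscheme of a Hilbert scheme. First I would establish a uniform projective embedding: by Bombieri's pluricanonical theorem, for a minimal surface $S$ of general type with $K_S^2 = x$ and $\chi_h(S) = y$, the linear system $|nK_S|$ with $n \geq 5$ defines a birational morphism onto the canonical model (a normal surface with at worst Du Val singularities and ample canonical class), and it realizes the canonical model as a subvariety of $\mathbb{P}^N$ where $N = h^0(nK_S) - 1$. By Riemann--Roch together with vanishing of higher cohomology for $n$ large, both $N$ and the Hilbert polynomial $P(t) = \chi(\mathcal{O}_S(tnK_S))$ depend only on $n$, $x$, and $y$.

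Next I would use Grothendieck's theorem that the Hilbert scheme $H := \mathrm{Hilb}^{P}(\mathbb{P}^N)$ is a projective scheme, and cut out the locally closed subscheme $H^\circ \subset H$ parameterizing those subschemes that are canonical surfaces (normal, Du Val singularities, ample canonical class) with $\mathcal{O}(1) \cong \mathcal{O}(nK)$. The openness of normality, of having only rational double points, and of the relevant line bundle isomorphism in flat families makes $H^\circ$ locally closed. Two points of $H^\circ$ correspond to the same abstract canonical surface if and only if they lie in one orbit of $G := \mathrm{PGL}(N+1)$, so set-theoretically $\mathcal{M}_{x,y} = H^\circ / G$.

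The main obstacle, and the heart of Gieseker's contribution, is endowing this orbit space with the structure of a quasiprojective scheme. For this I would appeal to Mumford's geometric invariant theory, which demands that the $n$-canonical Hilbert points of canonical surfaces be Hilbert-stable (or at least semistable) for all sufficiently large $n$. This asymptotic stability is proved via the Hilbert--Mumford numerical criterion: one analyzes the weights of arbitrary $1$-parameter subgroups of $G$ on the Hilbert point and bounds them using the ampleness of $K_S$ and the Riemann--Roch-controlled growth of the spaces of $n$-canonical sections. Once stability is in hand, the GIT quotient $H^\circ /\!\!/ G$ inherits the structure of a quasiprojective scheme, and quasiprojectivity automatically yields finiteness of the number of irreducible components. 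The technically demanding step is the weight estimate establishing asymptotic stability; the rest is relatively formal from the modern perspective.
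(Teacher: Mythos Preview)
Your outline is a faithful sketch of Gieseker's original argument, but note that the paper does not prove this theorem at all: it is quoted verbatim as a result of Gieseker \cite{Gie} and used as a black box to conclude that there are only finitely many diffeomorphism types of algebraic surfaces with $p_g=q=0$ and $1\le K^2\le 9$. So there is no ``paper's own proof'' to compare against; your proposal goes well beyond what the authors intended, since they simply cite the result.

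That said, your sketch of the actual proof is essentially correct in broad strokes: Bombieri's uniform pluricanonical embedding of canonical models, the locally closed locus in the Hilbert scheme, the identification of the moduli space with the orbit space under $\mathrm{PGL}$, and Gieseker's asymptotic Hilbert stability via the Hilbert--Mumford criterion leading to a quasiprojective GIT quotient. The only points I would flag are minor: one should be a bit careful that ``$\mathcal{O}(1)\cong \mathcal{O}(nK)$'' is not an open condition per se but is arranged by a suitable stratification or by rigidifying the family, and the phrase ``openness of having only rational double points'' deserves a reference (it follows from semicontinuity and the fact that Du Val singularities deform only to Du Val or smooth points). None of this affects the logical structure you laid out.
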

 Notice in the cases that we are interested, $p_g=q=0$ and $b^-=0, \cdots, 8$, there are only finitely many choices of $x=1, \cdots, 9$ and $y=1$. Thus, there are only finitely many such surfaces up to diffeomorphism. In particular, the number of possible fundamental groups is finite.

For detailed discussion on these surfaces, especially regarding the possible (algebraic) fundamental groups, see the beautiful survey paper \cite{BCP}. One of the outstanding problems asks for the greatest positive number $a(=9-b^-)$ such that $c_1^2\le a$ implies the fundamental group to be finite, and the smallest positive number $b$ such that $c_1^2\ge b$ implies the fundamental group to be infinite. Currently, we know $b\ge 7$ and $a\le 3$. 

There is a remarkable corollary of Yau's solution of Calabi conjecture \cite{Yau}.

\begin{theorem}\label{Yau}
Let $S$ be an algebraic surface of general type. Then $K_S^2=9\chi_h(S)$ if and only if the universal covering of $S$ is the complex ball.
\end{theorem}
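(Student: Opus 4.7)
The plan is to route through the existence of a K\"ahler--Einstein metric of negative Ricci curvature and then analyze the equality case of the Miyaoka--Yau inequality pointwise. First I would translate the hypothesis into Chern class form: by Noether's formula $12\chi_h = c_1^2 + c_2$, the equality $K_S^2 = 9\chi_h$ is exactly the Miyaoka--Yau equality $c_1^2 = 3c_2$. So both directions of the theorem reduce to connecting this algebraic equality to the complex-hyperbolic structure on the universal cover.

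For the forward direction, suppose $\widetilde S \cong B^2 \subset \C^2$, so that $S = B^2/\Gamma$ for a torsion-free cocompact lattice $\Gamma \subset PU(2,1)$. The Bergman metric on $B^2$ is $PU(2,1)$-invariant and is K\"ahler--Einstein with constant negative holomorphic sectional curvature, hence descends to $S$ with the same properties. A direct Chern--Weil computation of $c_1^2$ and $c_2$ from the curvature forms of this metric, i.e.\ the calculation that underlies Miyaoka--Yau, then gives $c_1^2(S) = 3c_2(S)$.

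For the reverse direction, assume $c_1^2 = 3c_2$. I would pass to the canonical model $S_{\mathrm{can}}$, obtained from the minimal model by contracting the $(-2)$-configurations orthogonal to $K_S$, so that $K_{S_{\mathrm{can}}}$ is ample, and apply the Aubin--Yau theorem to produce a K\"ahler--Einstein metric $\omega$ with $\mathrm{Ric}(\omega) = -\omega$. The standard derivation of Miyaoka--Yau from such a metric exhibits $3c_2 - c_1^2$ as the integral of a pointwise nonnegative curvature expression that vanishes at a point precisely when the Riemannian curvature tensor has the form of a complex space form of constant negative holomorphic sectional curvature. The equality hypothesis forces this defect to vanish identically on $S_{\mathrm{can}}$, so its universal cover is a simply connected complete K\"ahler manifold of constant negative holomorphic sectional curvature, hence biholomorphic to $B^2$ by the classification of complex space forms. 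Since $B^2$ contains no rational curves while any $(-2)$-curve on $S$ would lift to one upstairs, $S$ has no $(-2)$-curves, so $S = S_{\mathrm{can}}$ and $\widetilde S \cong B^2$.

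The main obstacle is the pointwise equality analysis in the reverse direction. One needs both the analytic input from Aubin--Yau and the sharp form of the Chern--Weil identity behind Miyaoka--Yau, together with a linear-algebraic argument showing that equality forces the full K\"ahler curvature tensor into the complex-hyperbolic model rather than some weaker condition such as constant scalar curvature. The passage between $S$ and $S_{\mathrm{can}}$ is technical but routine by comparison once the universal cover of $S_{\mathrm{can}}$ has been identified.
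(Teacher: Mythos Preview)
The paper does not actually prove this theorem. It is stated in Section~2.2 as background material, attributed to Yau's solution of the Calabi conjecture and cited via \cite{Yau}; no proof is given in the paper itself. So there is nothing to compare against on the paper's side.

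Your sketch is essentially the standard argument and is correct in outline: Noether's formula $12\chi_h=c_1^2+c_2$ converts $K_S^2=9\chi_h$ into the Miyaoka--Yau equality $c_1^2=3c_2$; the forward direction is the Chern--Weil computation for the Bergman metric on a ball quotient; the reverse direction uses the Aubin--Yau K\"ahler--Einstein metric and the fact that $3c_2-c_1^2$ is the integral of a nonnegative curvature density vanishing exactly in the complex-hyperbolic case. One technical point worth tightening: when you pass to the canonical model $S_{\mathrm{can}}$ to get $K$ ample, $S_{\mathrm{can}}$ may have rational double points, so the Aubin--Yau theorem and the pointwise equality analysis must be invoked in their orbifold form (or one must otherwise justify that the singularities do not disturb the argument). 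Your concluding step---that $B^2$ contains no compact curves, hence there are no $(-2)$-curves and $S=S_{\mathrm{can}}$---then closes this loop, but the orbifold input is needed before you can reach that conclusion.
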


Especially, when $p_g=0$, $c_1^2=9 \Longrightarrow |\pi_1|=\infty$.  Recently, Prasad and Yeung (and Cartwright-Steger) give a complete classification of these so called fake projective planes. It is known that there are exactly 100 up to biholomorphism \cite{PY, CS}. 

\subsection{Symplectic $4$-manifolds}
Let us now take a look at the possible list of symplectic $4$-manifolds with $b^+=1$. We discuss it using the notion of symplectic Kodaira dimension.

For $\kappa^s=-\infty$, as we mentioned, there are nothing more than rational and ruled surfaces.

For $\kappa^s=0$, although there are no classification at this moment, in addition to hyperelliptic surfaces, we know that there are certain (non-complex) $\mathbb T^2$ bundles over $\mathbb T^2$ with $b_1=2$. All of them are symplectic \cite{Gei}. They could have different fundamental groups from hyperelliptic surfaces since $Nil^4$, $Nil^3\times \mathbb E^1$, $Sol^3\times \mathbb E^1$ and $\mathbb E^4$ are all the possible geometric types and on the other hand all hyperelliptic surfaces have geometric type $\mathbb E^4$. 

Let us digress on the definition of a geometry structure (in the sense of Thurston).
A {\it model geometry} is a simply connected smooth manifold $X$ together with a transitive action of a Lie group $G$ on $X$ with compact stabilizers.
A model geometry is called {\it maximal} if $G$ is maximal among groups acting smoothly and transitively on $X$ with compact stabilizers. Sometimes this condition is included in the definition of a model geometry.
A {\it geometric structure} on a manifold $M$ is a diffeomorphism from $M$ to $X/\Gamma$ for some model geometry $X$, where $\Gamma$ is a discrete subgroup of $G$ acting freely on $X$. If a given manifold admits a geometric structure, then it admits one whose model is maximal. In other word, different geometries are distinguished by their fundamental groups.

It is remarkable that T.-J. Li \cite{L_Kod0c} proved that all symplectic manifolds with $\kappa^s=0$, SCY surfaces, have the same rational homology types as $K3$ surfaces, Enrique surfaces or $\mathbb T^2$ bundles over $\mathbb T^2$. Moreover, the paper of S. Friedl and S. Vidussi discussed the restrictions on the fundamental groups of SCY $4$-manifolds \cite{FV}.

For $\kappa^s=1$ or $2$, the topology of symplectic $4$-manifolds is far more complicated than algebraic surfaces. As shown by Gompf, any finitely presented group could be realized as fundamental groups for symplectic $4$-manifolds of $\kappa^s=1$, resp. $\kappa^s=2$. When we restrict on $b^+=1$, it is interesting to know 

\begin{question}\label{questionb+=1} Is any finitely presented group $G$ with $G/[G, G]$ finite abelian group, the fundamental group of a symplectic $4$-manifold with $b^+=1$?
\end{question}

We answer this question at homology level in Corollary \ref{fund_b+=1}. Our manifolds are symplectically minimal and all have $\kappa^s=1$. An infinite family of minimal symplectic $4$-manifolds with $b^+=1$, $b_1=2$, $b^-=1$ and $\kappa^s=1$ were constructed in \cite{Ak}.


For symplectic $4$-manifolds with $\kappa^s=2$, there are possibly two cases: $b_1=2$, $b^-=0$ or $b_1=0$, $b^-=0, 1, \cdots, 8$. The first case is not expected to occur in the symplectic category since there are currently no examples beyond the BMY line, i.e. $c_1^2>3c_2$. Hence, we focus on the second case. In this paper, we construct several examples through Luttinger surgery or rational blowdown with fixed $c_1^2$ ($=9-b^-$). This would demonstrate the difference between symplectic and algebraic categories.

Let us discuss an example to savor the difference between symplectic $4$-manifolds and algebraic surfaces on fundamental groups.

In his paper \cite{Nor} on Zariski's conjecture, Nori shows that for any embedded algebraic curve $C$ with $C^2>0$ in an algebraic surface $S$, the induced group homomorphism $\pi_1(C)\rightarrow \pi_1(S)$ is surjective (Corollary 2.4 B). However, such result does not hold in symplectic category.

Let us first recall the definition of the rank of a finitely presented group. This should not be confused with the rank of a vector space.

\begin{definition}
The rank of a finitely generated group $G$, $rank(G)$, is defined as the smallest cardinality of a set $\Theta$ such that there exists an onto homomorphism $F(\Theta)\rightarrow G$, where $F(\Theta)$ is the free group with free basis $\Theta$. 
\end{definition}

It is clear from the definition that if $H$ is a quotient group of $G$ then $rank(H)\le rank(G)$. In addition, according to classic Grushko theorem, rank behaves additively with respect to taking free product, i.e. for any groups $A$ and $B$ we have $rank(A*B)=rank(A)+rank(B)$. Especially, $rank({\Large{*}}_{i=1}^k \mathbb Z_{p_i})=\sum_{i=1}^k rank(\mathbb Z_{p_i})=k$. Here ${\Large{*}}_{i=1}^k \mathbb Z_{p_i}$ is the free product of cyclic groups $\mathbb Z_{p_i}$.


\begin{prop}\label{sympNori} For any finitely presented group $G$, we have a minimal symplectic $4$-manifold $X$ with $K^2=1$, $\pi_1(X)=G$ and $b^+>1$. Moreover, when $rank(G)>4$, there is an embedded symplectic surface $C$ in $X$ such that the induced group homomorphism $\pi_1(C)\rightarrow \pi_1(X)$ is not surjective.
\end{prop}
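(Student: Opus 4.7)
The statement has two parts and I plan to address them via a single construction.

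For the realization of $\pi_1(X)=G$ with $K^2=1$ and $b^+>1$, the strategy is a variant of Gompf's construction that preserves $c_1^2=1$. Begin with a simply-connected minimal symplectic $4$-manifold $Y$ of general type with $c_1^2(Y)=1$ and $b^+(Y)\ge 2$, containing a square-zero symplectic torus $T_Y$. Such $Y$ can be produced by symplectically summing a small $c_1^2=1$ building block from the geography literature with a $c_1^2=0$ block to raise $b^+$ while keeping the first Chern square fixed. Separately, apply Gompf's theorem to obtain a symplectic $4$-manifold $M_G$ with $\pi_1(M_G)=G$, $c_1^2(M_G)=0$, containing a nullhomotopic symplectic square-zero torus $T_M$. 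Set $X=Y\#_T M_G$ by fiber summing along $T_Y$ and $T_M$. The standard fiber-sum formulas give $c_1^2(X)=c_1^2(Y)+c_1^2(M_G)=1$ and $b^+(X)=b^+(Y)+b^+(M_G)\ge 2$, and van Kampen---using simple connectivity of $Y$ together with the nullhomotopy of $T_M$ in $M_G$---yields $\pi_1(X)=G$. Any exceptional sphere introduced by the gluing can be blown down to preserve minimality.

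For the non-surjectivity claim assuming $rank(G)>4$, the plan is to exhibit a genus-$2$ embedded symplectic surface $C\subset X$ with $C^2>0$ and then compare ranks. The adjunction formula with $K_X^2=1$ shows that a genus-$2$ symplectic surface $C$ with $K_X\cdot C=0$ satisfies $C^2=2>0$. To produce such a $C$, arrange $Y$ to also contain two square-zero symplectic tori $T_1,T_2$ meeting transversally at a single positive point, both disjoint from $T_Y$. Symplectically resolving the intersection $T_1\cup T_2$ produces an embedded symplectic genus-$2$ surface $C\subset Y$ of self-intersection $2$. Because the fiber sum is supported away from $C$, the surface persists inside $X$ with the same genus and self-intersection. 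Since $\pi_1(C)$ has rank $4$ and rank cannot grow under surjection, the image of $\pi_1(C)\to \pi_1(X)=G$ has rank at most $4$, and is therefore a proper subgroup whenever $rank(G)>4$.

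The main obstacle is building the block $Y$ with all the required features simultaneously: $c_1^2=1$, $b^+\ge 2$, simple connectivity, minimality, and three pairwise disjoint square-zero symplectic tori $T_1,T_2,T_Y$ with $T_1\cdot T_2=1$. This can be engineered by a careful symplectic sum of two explicit blocks---one contributing the intersecting pair $T_1,T_2$ (for instance taken from a product neighborhood of type $T^2\times T^2$ embedded inside a larger piece) and the other providing the $c_1^2=1$ contribution---while tracking disjointness and the numerical invariants at each step using the standard fiber-sum formulas. A subsidiary point is verifying that Gompf's $M_G$ can be arranged with $c_1^2(M_G)=0$ alongside a nullhomotopic symplectic torus $T_M$; this follows by inspecting his construction, in which such tori arise naturally after all relators have been introduced.
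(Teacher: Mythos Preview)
Your plan carries genuine gaps and is far more elaborate than needed.

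For the first part, the paper simply cites Gompf's Theorem~6.2(B) (geography with prescribed fundamental group), which already produces a minimal symplectic $X$ with $\pi_1(X)=G$, $K^2=1$, and $b^+>1$ for every finitely presented $G$. Your reconstruction of a special block $Y$ is unnecessary, and you explicitly acknowledge that building $Y$ with all the listed properties is ``the main obstacle'' while not actually carrying it out. Moreover, your minimality remark is wrong: blowing down an exceptional sphere raises $c_1^2$ by one, so it would destroy $K^2=1$. The correct route is Usher's theorem on minimality of symplectic sums, applied to minimal pieces.

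For the second part, you miss the key idea. Since $b^+(X)>1$, Taubes' $\mathrm{SW}\Rightarrow\mathrm{Gr}$ theorem furnishes, in \emph{any} such $X$, an embedded symplectic surface $C$ representing the canonical class $K_X$. Adjunction gives $2g(C)-2=K_X\cdot C + C^2 = 2K_X^2 = 2$, so $g(C)=2$; then your rank comparison (which matches the paper's) finishes. The hypothesis $b^+>1$ is there precisely to invoke Taubes, and this eliminates any need to engineer $C$ inside a particular building block. Your alternative---arranging two extra square-zero tori $T_1,T_2$ with $T_1\cdot T_2=1$ inside $Y$, disjoint from $T_Y$---remains unproven and forces the whole construction of $X$ to be tailored to producing $C$, rather than obtaining $C$ a posteriori from Taubes.
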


\begin{proof}
The first statement follows from Theorem 6.2 (B) in \cite{Gom}. In this theorem, the geography of symplectic $4$-manifolds with given fundamental group is discussed. 

By Taubes' theorem \cite{T}, there is an embedded symplectic surface $C$ in the canonical class $K$. By adjunction formula, it has genus two. Since $rank(\pi_1(\Sigma_2))\le 4$ (it is actually true that $rank(\pi_1(\Sigma_2))= 4$), $rank(G)$ has to be no greater than $4$ if the homomorphism  $\pi_1(C)\rightarrow \pi_1(X)$ is surjective. This contradicts our assumption.
\end{proof}

It would be interesting to know whether this is still true when $X$ is an algebraic surface. However, if Conjecture A in \cite{Nor} is true, then $\pi_1(X)$ would be a quotient group of $\pi_1(\Sigma_g)$ where $g=K^2+1$. Hence the fundamental group of the symplectic surface in the above construction would surject to $\pi_1(X)$.

\subsection{Topological fundamental groups and algebraic fundamental groups}
By Lefschetz's hyperplane theorem and Bertini theorem, the fundamental group of a non-singular projective variety of dimension $\ge 3$ equals the fundamental group of a non-singular hyperplane section. Therefore the set of the fundamental groups of projective varieties is the same as that of algebraic surfaces.

The are many restrictions on the fundamental groups of an algebraic surface. For example, $b_1(G)$ of any such group $G$ should be an even number. However, there is a classical result proved by Serre \cite{Ser}.

\begin{theorem}
Any finite group is the fundamental group of some algebraic surface. 
\end{theorem}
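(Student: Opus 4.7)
The plan is to follow Serre's classical construction: build a smooth, simply connected projective variety $Y$ on which $G$ acts freely, and then pass to the quotient $S=Y/G$. Once such a $Y$ is in hand, $Y\to S$ is an unramified Galois covering with deck transformation group $G$, and since $Y$ is simply connected, the covering space correspondence forces $\pi_1(S)=G$.

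To produce $Y$, I would start from any faithful complex linear representation $\rho\colon G\hookrightarrow GL(V)$, inducing an action of $G$ on $\mathbb{P}(V)$. This action is never free: for each $g\ne 1$ the fixed locus $F_g\subset\mathbb{P}(V)$ is a union of projectivizations of eigenspaces of $\rho(g)$, hence a proper closed subvariety. The key trick is to enlarge the representation by replacing $V$ with $V^{\oplus k}$ carrying the diagonal action; in $\mathbb{P}(V^{\oplus k})$ the codimension of each $F_g$ grows linearly in $k$, so for $k$ large enough the full bad locus $F=\bigcup_{g\ne 1}F_g$ has codimension at least $3$.

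Next I would cut out a smooth $G$-invariant surface inside $\mathbb{P}(V^{\oplus k})$ that misses $F$. Since $G$ acts linearly, the graded ring of invariants $\bigoplus_{d}H^0(\mathbb{P}(V^{\oplus k}),\mathcal{O}(d))^G$ is finitely generated, and for $d$ sufficiently large the invariant sections of $\mathcal{O}(d)$ separate points and tangent vectors on the $G$-invariant open set $\mathbb{P}(V^{\oplus k})\setminus F$. A generic choice of $G$-invariant complete intersection $Y=H_1\cap\cdots\cap H_{N-2}$ cut out by such sections is then smooth of dimension $2$ by Bertini's theorem, is $G$-invariant by construction, and—by a dimension count using the codimension bound on $F$—can be arranged to be disjoint from $F$. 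Consequently $G$ acts freely on $Y$.

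Applying the Lefschetz hyperplane theorem inductively to the smooth complete intersection $Y\subset\mathbb{P}(V^{\oplus k})$ of dimension $2$ gives $\pi_1(Y)=\pi_1(\mathbb{P}(V^{\oplus k}))=1$, so $S=Y/G$ is a smooth projective algebraic surface with $\pi_1(S)=G$. The main obstacle is the third step: one must simultaneously guarantee smoothness, $G$-invariance, and disjointness from $F$, which requires combining a Bertini argument for the invariant linear system with the codimension bound. Once $F$ has been pushed into codimension at least $3$ via the diagonal enlargement, these three requirements are compatible, and the construction goes through.
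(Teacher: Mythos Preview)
The paper does not supply its own proof of this theorem; it simply records Serre's result and cites \cite{Ser}. So there is nothing in the paper to compare your argument against.

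That said, your proposal is a correct outline of Serre's classical construction. The codimension count is right: if $g\neq 1$ and the largest eigenspace of $\rho(g)$ in $V$ has dimension $d<\dim V=n$, then in $\mathbb{P}(V^{\oplus k})$ the corresponding fixed component has codimension $k(n-d)\ge k$, so $k\ge 3$ suffices to push $F$ into codimension $\ge 3$. The Lefschetz step is also fine, since a smooth complete intersection surface in projective space is simply connected. The only place that deserves a bit more care is your Step~3: the cleanest way to simultaneously achieve smoothness, $G$-invariance, and disjointness from $F$ is to pass to the quotient $(\mathbb{P}(V^{\oplus k})\setminus F)/G$, which is smooth and quasi-projective, embed it projectively, and apply Bertini there to cut out a smooth surface $S$ contained in the open part; its preimage $Y$ is then automatically a smooth $G$-invariant complete intersection missing $F$. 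Your phrasing in terms of generic members of the invariant linear system amounts to the same thing, but routing through the quotient makes the Bertini argument and the dimension count transparent.
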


\subsubsection{Algebraic fundamental groups}
When we talk about projective varieties, the algebraic fundamental group is a more suitable notion to work with. 

The topological fundamental group could be understood as the group of deck transformation of the universal covering space. However, universal covering space is not very practical to deal with in the algebraic category. On the other hand, finite \'etale morphisms are the appropriate generalization of the covering spaces in the algebraic category. The notion of \'etale fundamental group is introduced by Grothendieck in \cite{SGA1} as a manageable generalization of topological ones. We refer the readers to \cite{SGA1} for a complete definition. However, we would like to point out the following fact: When the base field $K$ is the complex field $\mathbb C$ (or more generally a separably closed field of characteristic $0$), we know the {\it algebraic fundamental group} $\pi_1^{alg}$, as \'etale fundamental group is typically called in this case, is the profinite completion (i.e. the projective limit of all finite quotients) of the topological fundamental group.

\subsubsection{Algebraic fundamental groups for surfaces with $p_g=0$}

When $S$ is a surface of general type with $p_g=0$, the irregularity $q$ has to be $0$ and $\chi_h(S)=1$.

The algebraic fundamental group behaves as two extremes for surfaces of general type with $p_g=0$. When $c_1^2=9$, Theorem \ref{Yau} asserts that the fundamental groups should be always infinite, since all such manifolds are ball quotients.

On the other hand, when $c_1^2=1$ or $2$, there is the following remarkable result of M. Reid \cite{Reid}.
\begin{theorem}\label{reid}
1) $c_1^2=1 \Longrightarrow \pi_1^{alg}\cong \mathbb Z_m$ for $1\le m\le 5$.

2)  $c_1^2=2 \Longrightarrow |\pi_1^{alg}|\le 9$.
\end{theorem}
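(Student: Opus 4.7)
The strategy is to convert bounds on $|\pi_1^{alg}(S)|$ into numerical bounds via étale Galois covers. Any finite quotient $G$ of $\pi_1^{alg}(S)$ corresponds to an étale Galois cover $\tilde S\to S$ of degree $|G|$; since the cover is étale, $\tilde S$ is again a minimal surface of general type, and the standard multiplicativity formulas give
$$c_1^2(\tilde S)=|G|\,c_1^2(S),\qquad \chi_h(\tilde S)=|G|\,\chi_h(S)=|G|,$$
where $\chi_h(S)=1$ since $p_g=q=0$. The identity $p_g=\chi_h-1+q$ together with $q(\tilde S)\ge 0$ then yields $p_g(\tilde S)\ge|G|-1$. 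The task reduces to bounding $|G|$ by applying geography-type inequalities for surfaces of general type to $\tilde S$, and finally taking $G=\pi_1^{alg}(S)$ itself.

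For part (1), the main ingredient is Noether's inequality $p_g\le\tfrac12 c_1^2+2$ applied to $\tilde S$:
$$|G|-1\le p_g(\tilde S)\le\tfrac12|G|\,c_1^2(S)+2=\tfrac{|G|}{2}+2,$$
which gives $|G|\le 6$. To sharpen this to $|G|\le 5$, one excludes $|G|=6$: equality would force $\tilde S$ onto the Noether line with $K^2=6$, $p_g=5$, $q=0$, and Horikawa's classification of Noether-line surfaces realizes any such $\tilde S$ as a double cover of a specific rational surface whose involution structure is incompatible with an additional free $\Z/6$-action. To upgrade ``$|G|\le 5$'' to ``$\pi_1^{alg}$ is cyclic'', one applies the same analysis to every $p$-Sylow quotient of $\pi_1^{alg}(S)$: each such quotient has order at most the prime-power bound allowed by the inequality, and a brief group-theoretic argument combining Sylow constraints forces $\pi_1^{alg}(S)\cong \Z/m$ with $m\le 5$.

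For part (2), Noether's inequality reads $|G|-1\le|G|+2$, which is vacuous, so a finer estimate is required. The natural substitute uses the second plurigenus: for a minimal surface of general type, Riemann--Roch together with the vanishing $h^0(-K)=0$ gives $P_2=\chi_h+c_1^2$, so $P_2(\tilde S)=|G|+2|G|=3|G|$. Combining this lower bound with geometric upper bounds coming from the study of the bicanonical map of $\tilde S$ (whose image sits in $\mathbb P^{P_2(\tilde S)-1}$ and whose degree is controlled by $(2K_{\tilde S})^2=8|G|$), and incorporating the Bogomolov--Miyaoka--Yau inequality $c_1^2(\tilde S)\le 3c_2(\tilde S)$, one obtains an inequality forcing $|G|\le 9$.

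The main obstacle in both parts is the extremal-degree analysis. For $c_1^2=1$ the crude Noether bound must be improved by one via Horikawa's structural theorem for Noether-line surfaces, and then the cyclicity conclusion requires a Sylow-by-Sylow argument rather than direct control of the whole group. For $c_1^2=2$ one cannot use Noether at all, and must substitute a finer estimate involving plurigenera and the geometry of the bicanonical system on $\tilde S$; organising these inequalities into a sharp bound of $9$ rather than a weaker one is where the real work lies.
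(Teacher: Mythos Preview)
The paper does not actually prove this theorem: it is quoted as a result of Reid, and the only remark the paper makes about the proof is the single sentence ``The proof of the results applies the Noether-Horikawa inequality for algebraic surfaces.'' So there is very little to compare against; what follows is an assessment of your outline on its own terms and against that one hint.

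Your treatment of part (1) is essentially the standard argument and matches the paper's hint. The covering trick plus Noether $K^2\ge 2p_g-4$ gives $|G|\le 6$ exactly as you wrote (this step is attributed to Deligne). You correctly identify that the sharpening to $|G|\le 5$ requires excluding the Noether-line case $K^2=6$, $p_g=5$ with a free $\mathbb Z/6$-action, and that cyclicity is a separate argument. Your description here is a reasonable sketch, though Reid's actual argument for cyclicity goes through the torsion subgroup of $\mathrm{Pic}(S)$ and half-canonical linear systems rather than a pure Sylow count.

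Your part (2), however, is not a proof. You correctly observe that Noether's inequality is vacuous here, but the substitute you propose does not work as written. The formula $P_2(\tilde S)=3|G|$ is fine, and the bicanonical image lives in $\mathbb P^{3|G|-1}$ with degree at most $(2K_{\tilde S})^2=8|G|$; but comparing $8|G|$ to $3|G|-1$ gives no upper bound on $|G|$. The Bogomolov--Miyaoka--Yau inequality is likewise vacuous on $\tilde S$: it reads $2|G|\le 3\cdot 10|G|$. So the three ingredients you list (bicanonical degree, $P_2$, BMY) do not combine to any bound at all, let alone $9$. Your final paragraph effectively concedes this by saying ``organising these inequalities into a sharp bound of $9$ is where the real work lies,'' but you have not indicated any mechanism that would produce a bound. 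Reid's actual argument for $c_1^2=2$ is not a covering/inequality argument of this shape; it proceeds by a direct analysis of torsion line bundles and the associated linear systems $|K_S+\tau|$ on $S$ itself, and is substantially more delicate than the $c_1^2=1$ case.
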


The statements are sharp in both cases. For the case $c_1^2=1$, all could be realized. For $c_1^2=2$, out of all finite groups with order no greater than $9$, only the groups $\mathbb Z_4$ and $\mathbb Z_2\times \mathbb Z_3$ are not known how to be realized as $\pi_1^{alg}$. It is conjectured that topological fundamental group should also satisfy both bounds (\cite{BCP}). In addition to the cases mentioned, $\mathbb Z_3$ for both, and $\mathbb Z_6$ for $c_1^2=2$ are not yet confirmed to be topological fundamental groups.

The proof of the results applies the Noether-Horikawa inequality for algebraic surfaces. When a surface has $q=0$, the Noether-Horikawa inequality reads as $K^2\ge 2\chi_h-6$. In addition, for a symplectic $4$-manifold $X$, we could define $\chi_h(X)=\frac{e(X)+\sigma(X)}{4}$. However, even for a simply-connected symplectic $4$-manifold of general type, the inequality does not hold. This gives us space for larger fundamental groups.

\subsection{Invariants for finitely presented groups}\label{invariants}
It is known that any finitely presented group is the fundamental group of some minimal symplectic $4$-manifold by Gompf \cite{Gom}. We could choose these manifolds such that they have $c_1^2=0$ or $c_1^2>0$.

It is then very natural to ask the ``minimal size" of such manifolds with a given fundamental group $G$. Since the first Betti number is determined by the group $G$ as the rank $b_1(G)$, 
natural quantities to describe the size are $b^+$ and $b_2$.


\begin{question}Fix a finitely presented group $G$, what is the minimal $b^+(X)$ (or $b_2(X)$) such that $X$ is a symplectic $4$-manifold with $\pi_1(X)=G$?
\end{question} 

Since the fundamental group $G$ is fixed, asking for minimal $b^+(X)$ is equivalent to asking minimal $\chi_h(X)$ since $\chi_h(X)=\frac{b^+(X)-b_1(X)+1}{2}$. Meanwhile, asking for minimal $b_2(X)$ is equivalent to asking minimal $e(X)=b_2(X)-2b_1(X)+2$.

Let us denote this number by $b^+(G)$ (resp. $b^2(G)$). Similar question could be asked under the assumptions $c_1^2(X)=0$ and $X$ is minimal, in which we call the corresponding numbers $b_0^+(G)$ and $b_0^2(G)$. Especially, these would provide us invariants for finitely presented groups. By definition, $b_0^2(G)\ge b^2(G)$ and $b_0^+(G)\ge b^+(G)$.

By the classification of symplectic $4$-manifolds depending on symplectic Kodaira dimension (see Section \ref{Kodaira}), except for the case when $G$ is a surface group, any minimal symplectic $X$ with $\pi_1(X)=G$ would have $c_1^2(X)\ge 0$. In other word, we have $2e(X)+3\sigma(X)\ge 0$. 
Then $2(2-2b_1(G)+b^++b^-)+3(b^+-b^-)\ge 0$. In other words, $b^+(G)\ge \frac{4b_1(G)-4+b^-(G)}{5}\ge \frac{4b_1(G)-4}{5}$. This gives a lower bound of $b^+(G)$. In general, this is not sharp. 


Gompf's original construction gives us an upper bound. In his examples, $e(X)=12r$ and $\sigma(X)=-8r$ where $r(G)\ge k+l+1$ is a number explicitly determined from a presentation $G = <g_1, \cdots, g_k | r_1, \cdots, r_l>$. It is remarkable that \cite{BKcmh} realizes $r(G)= k+l+1$. Then $b^+(X)=2r(G)+b_1(G)-1=2(k+l)+b_1(G)+1$. Which is an upper bound of our number $b_{(0)}^+(G)$. (It also provides an upper bound of $b_{(0)}^2(G)$ which is $12r(G)+2b_1(G)-2$.) 
If $G$ is not trivial, $2r(G)+b_1(G)-1\ge 3$.  On the other hand, Corollary 6.10 and Proposition 6.4 of \cite{Gom} realizes a free product of $n$ cyclic groups by a symplectic manifold with $c_1^2=0$ and $c_2=48+12m$ whenever $n\le 2m+6$ ($m\ge 0$). Thus $b^+(X)=b_1(G)+1+(2m+6)$, and the last term can be taken to be $n$ rounded up to the nearest even integer (slightly modified when $n\le 4$).
To summarize, for free groups $\mathbb F_n$, construction in \cite{Gom} gives 
\[
b^+(\mathbb F_n)\le\left\{\begin{array}{cl} 
2n+1 &\hbox{ for all even $n\ne 4$},\\
2n+2& \hbox{ for all odd $n$},\\
11 & \hbox{ for $n=4$}.\\
\end{array}\right.
\]

\[
b_2(\mathbb F_n)\le\left\{\begin{array}{cl} 
8n+10 &\hbox{ for all even $n\ne 4$},\\
8n+16& \hbox{ for all odd $n$},\\
54 & \hbox{ for $n=4$}.\\
\end{array}\right.
\]

In the case when $G$ is a surface group, $b^+(G)=1$ and $b^2(G)=2$ if $G$ is nontrivial and $b^2(G)=1$ if $G$ is trivial. They are realized by ruled surfaces.

Moreover, our Question \ref{questionb+=1} is equivalent to: if a group $G$ has a finite group as its abelization, is $b^+(G)=1$? Our Theorem \ref{main} suggests the following more general bound. 

\begin{question} \label{min b+}Is it true that $$b^+(G)\le b_1(G)+1?$$

Moreover, for which group $G$, $b^+(G)<b_1(G)+1$? 
\end{question}

Later in the paper, we verify this first part of the question when $G$ is a free product $({\Large{*}}_{i=1}^n \mathbb{Z}) \Large{*} ({\Large{*}}_{i=1}^k \mathbb Z_{p_i})$, or an abelian group of types $\mathbb Z_p \times \mathbb Z_q$ or $\mathbb Z_p \times \mathbb Z$. In particular, these are much better bounds than those of \cite{Gom} which we summarized above. We notice that for any minimal symplectic $4$-manifold $X$ with $0<c_1^2(X)\le 3c_2(X)$ (i.e. $-2e(X)\le 3\sigma(X)\le e(X)$) and $\pi_1(X)=G$, we have $b^+(X)\ge b_1(X)+1$. 

The second part of Question \ref{min b+} is essentially asking for the fundamental groups of manifolds with $\chi_h\le 0$. Minimal algebraic surfaces with $\chi_h\le 0$ is either ruled surfaces or surface bundles over torus or torus bundles over surface. Luttinger surgery on these manifolds may produce new symplectic manifolds. It is interesting to know the fundamental groups of those manifolds. Especially, manifolds obtained from applying several Luttinger surgeries on torus bundles over torus have special interests in the classification of symplectic $4$-manifolds with $\kappa^s=0$ (c.f. \cite{HoLi}).





There is another problem regarding the geography of manifolds with given fundamental group. 

\begin{question}
For any positive integer $a$, is there a symplectic $4$-manifold $X$ such that $\pi_1(X)=G$ and $b^+(X)=b^+(G)+2a$?
\end{question}

Our construction in the following section shows that we could find symplectic $4$-manifold $X$ such that $b^+(X)=b_1(G)-1+2a$ for any positive integer $a$.

We could similarly define the numbers $b_{alg}^2(G)$ and $b_{alg}^+(G)$ for algebraic surfaces. If $G$ cannot be realized as fundamental group of algebraic surface, e.g. $G=\mathbb Z$ or ${\Large{*}}_{i=1}^{k\ge 2} \mathbb Z_{p_i}$, we define both numbers to be $\infty$. Serre's theorem demonstrates that when $G$ is finite, both numbers are finite.

\section{Luttinger surgery and Arbitrary fundamental groups}
\label{subsec:Luttinger and Kodaira}
Luttinger surgery is introduced in \cite{Lu}. It has been very effective tool recently for constructing exotic smooth structures on $4$-manifolds. In this section, we would construct symplectic $4$-manifolds with arbitrary fundamental groups using Luttinger surgery.

\subsection{Luttinger surgery}
\label{subsec:Luttinger}

Let us briefly review Luttinger surgery. For the details, we refer the reader to \cite{Lu} and \cite{ADK}.

\begin{definition} Let $(X, \omega)$ be a symplectic $4$-manifold, and the torus $\Lambda$ be a Lagrangian submanifold of $X$ (so it has self-intersection $0$). Given a simple co-oriented loop $\lambda$ on $\Lambda$, let $\lambda'$ be a simple loop on $\partial(\nu\Lambda)$ that is parallel to $\lambda$ under the Lagrangian framing. For any integer $m$, the $(\Lambda,\lambda,1/m)$ \emph{Luttinger surgery}\/ on $X$\/ will be 
$X_{\Lambda,\lambda}(1/m) = ( X - \nu(\Lambda) ) \cup_{\phi} (S^1 \times S^1 \times D^2)$,  the $1/m$\/ surgery on $\Lambda$ with respect to $\lambda$ under the Lagrangian framing. Here 
$\phi : S^1 \times S^1 \times \partial D^2 \to \partial(X - \nu(\Lambda))$ denotes a gluing map satisfying $\phi([\partial D^2]) = m[{\lambda'}] + [\mu_{\Lambda}]$ in $H_{1}(\partial(X - \nu(\Lambda))$, where $\mu_{\Lambda}$ is a meridian of $\Lambda$.

\end{definition}

It is  shown in \cite{ADK} that $X_{\Lambda,\lambda}(1/m)$ possesses a symplectic form that restricts to the original symplectic form $\omega$ on $X\setminus\nu\Lambda$. The following lemma is easy to verify and the proof is left as an exercise to the readers.

\begin{lemma} 
\medskip
\medskip

\noindent \item $\pi_1(X_{\Lambda,\lambda}(1/m)) = \pi_1(X- \Lambda)/N(\mu_{\Lambda} \lambda'^m)$.
\smallskip
\noindent \item $\sigma(X)=\sigma(X_{\Lambda,\lambda}(1/m))$ and $e(X)=e(X_{\Lambda,\lambda}(1/m))$.
\end{lemma}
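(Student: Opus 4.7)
The plan is to view $X_{\Lambda,\lambda}(1/m)$ as the pushout
$$X_{\Lambda,\lambda}(1/m) = (X \setminus \nu(\Lambda)) \cup_{T^3} (T^2 \times D^2),$$
glued along $\partial\nu(\Lambda) \cong T^3$ via the map $\phi$, and to handle the two statements separately with Seifert--van Kampen and standard additivity formulas.

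For the fundamental group computation, I would apply Seifert--van Kampen to an open cover obtained by thickening the common $T^3$ boundary into collars on both sides. Since $X \setminus \Lambda$ deformation retracts onto $X \setminus \nu(\Lambda)$, the first piece contributes $\pi_1(X \setminus \Lambda)$. The second piece $T^2 \times D^2$ has $\pi_1 = \mathbb Z^2$ generated by the two $T^2$-factor loops, and under $\phi$ these loops are identified with loops on $\partial\nu(\Lambda)$ that already inject into $\pi_1(X \setminus \nu(\Lambda))$ via the boundary inclusion; so the amalgamation introduces no new generators from the glued solid torus bundle. What it does introduce is the single relation coming from the kernel of $\pi_1(T^3) \to \pi_1(T^2 \times D^2) = \mathbb Z^2$, namely $[\partial D^2]$ becoming null-homotopic. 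The defining condition $\phi_*([\partial D^2]) = m[\lambda'] + [\mu_\Lambda]$ in $H_1(\partial\nu(\Lambda)) = \pi_1(T^3)$ (the latter being abelian) identifies this class with $\mu_\Lambda \lambda'^m$, whose image in $\pi_1(X \setminus \nu(\Lambda))$ is the normal generator we must kill. This yields the presentation $\pi_1(X_{\Lambda,\lambda}(1/m)) = \pi_1(X - \Lambda)/N(\mu_\Lambda \lambda'^m)$.

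For the Euler characteristic and signature, I would use additivity under decomposition along a closed $3$-manifold. Because $e(T^3) = 0$ and $e(T^2 \times D^2) = e(T^2) = 0$, ordinary additivity of $e$ gives $e(X_{\Lambda,\lambda}(1/m)) = e(X \setminus \nu(\Lambda))$; applying the same to $X = (X \setminus \nu(\Lambda)) \cup_{T^3} \nu(\Lambda)$ with $\nu(\Lambda) \simeq T^2$ gives $e(X) = e(X \setminus \nu(\Lambda))$, and the two are equal. For the signature, Novikov additivity yields $\sigma(X_{\Lambda,\lambda}(1/m)) = \sigma(X \setminus \nu(\Lambda)) + \sigma(T^2 \times D^2)$. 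The piece $T^2 \times D^2$ has vanishing signature since its only nontorsion $H_2$-class is represented by the zero-section torus, with self-intersection zero; the identical argument applied to $\nu(\Lambda)$ gives $\sigma(X) = \sigma(X \setminus \nu(\Lambda))$, so both signatures coincide.

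The only real bookkeeping step is verifying the identification of $\phi_*([\partial D^2])$ with $\mu_\Lambda \lambda'^m$ inside $\pi_1(X \setminus \nu(\Lambda))$; once the Lagrangian framing fixes the curve $\lambda'$ on $\partial\nu(\Lambda)$, this is an immediate translation of the definition of $\phi$, and the remaining steps are routine.
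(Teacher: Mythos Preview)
Your proof is correct and is exactly the standard argument the paper has in mind; in fact the paper does not supply a proof at all, stating that ``the following lemma is easy to verify and the proof is left as an exercise to the readers.'' Your Seifert--van Kampen computation (using that $\pi_1(T^3)\to\pi_1(T^2\times D^2)$ is surjective with kernel generated by $[\partial D^2]$) and the additivity/Novikov argument for $e$ and $\sigma$ are precisely what is expected. One minor wording quibble: the two $T^2$-factor loops need not \emph{inject} into $\pi_1(X\setminus\nu(\Lambda))$; what you actually use (and what suffices) is that they lie in the image of $\pi_1(T^3)$, so that the glued piece contributes no new generators.
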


A direct corollary of this lemma is summarized below, which is essentially Proposition 4.4 of \cite{HoLi}. 

\begin{lemma} \label{3case}

The difference $b_1(X_{\Lambda,\lambda}(1/m))-b_1(X)$ is one of $-1, 0 ,1$.  And $$b^+(X_{\Lambda,\lambda}(1/m))-b^+(X)=b_1(X_{\Lambda,\lambda}(1/m))-b_1(X)=\frac{1}{2}(b_2(X_{\Lambda,\lambda}(1/m))-b_2(X)).$$
\end{lemma}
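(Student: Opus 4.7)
The plan is to first compute both $H_1(X)$ and $H_1(X_{\Lambda,\lambda}(1/m))$ as quotients of a common group by a single relator, and then use the invariance of $e$ and $\sigma$ from the previous lemma to leverage the statement on $b_1$ into the statements on $b^+$ and $b_2$.

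First I would fix $Y := X \setminus \nu\Lambda$, whose boundary is a 3-torus, and observe that both $X$ and $X' := X_{\Lambda,\lambda}(1/m)$ are obtained by gluing $T^2 \times D^2$ back onto $Y$ along $T^3$, with only the gluing diffeomorphism changed. Applying Mayer--Vietoris to each decomposition and using connectedness to drop the $H_0$ tail, I obtain
$$H_1(T^3) \xrightarrow{(\iota_Y,\,\iota_B)} H_1(Y) \oplus H_1(T^2) \longrightarrow H_1(X) \longrightarrow 0,$$
and the analogous sequence for $X'$. The map $\iota_Y$ is inclusion-induced from $\partial Y \hookrightarrow Y$ and is the same in both cases; only $\iota_B$ depends on the gluing. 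Using the basis $\lambda', \lambda'^{\perp}, \mu_\Lambda$ of $H_1(T^3)$ coming from the $Y$-side, the boundary of the filling disk represents $\mu_\Lambda$ in $X$ and $m\lambda' + \mu_\Lambda$ in $X'$. The two $H_1(T^2)$-generators can be used to eliminate $\iota_Y(\lambda')$ and $\iota_Y(\lambda'^{\perp})$ from the cokernel, and what remains is a single relation imposed on $H_1(Y)$:
$$H_1(X) \;=\; H_1(Y)\big/\langle\,\iota_Y(\mu_\Lambda)\,\rangle, \qquad H_1(X') \;=\; H_1(Y)\big/\langle\,\iota_Y(\mu_\Lambda) + m\,\iota_Y(\lambda')\,\rangle.$$

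Each of these is a quotient of $H_1(Y)$ by a single element, so $b_1(X)$ and $b_1(X')$ are each either $b_1(Y)$ (when the relator is a torsion element or zero) or $b_1(Y)-1$ (when it is of infinite order). Therefore $b_1(X') - b_1(X) \in \{-1, 0, 1\}$, proving the first claim. For the remaining equalities I would combine this with the preservation of $e$ and $\sigma$: from $e = 2 - 2b_1 + b_2$ and $e(X) = e(X')$ I get $b_2(X') - b_2(X) = 2\bigl(b_1(X') - b_1(X)\bigr)$, and from $\sigma = b^+ - b^-$, $b_2 = b^+ + b^-$, $\sigma(X) = \sigma(X')$ I get $b^+(X') - b^+(X) = \tfrac{1}{2}\bigl(b_2(X') - b_2(X)\bigr)$. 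Chaining the two yields the asserted chain of equalities.

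The only slightly delicate step is the careful identification of $\iota_B$ for the two gluings, together with the verification that eliminating the $H_1(T^2)$-factor really leaves exactly one relation in $H_1(Y)$; but since Luttinger surgery is simply a shear of the gluing diffeomorphism along the meridian direction by $m$ copies of $\lambda'$, this is essentially a $3\times 3$ matrix computation and I do not expect it to present serious difficulty.
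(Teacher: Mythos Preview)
Your argument is correct. The paper does not give a proof of this lemma at all; it simply states that it is ``a direct corollary'' of the preceding lemma (the $\pi_1$ formula together with invariance of $e$ and $\sigma$) and cites Proposition~4.4 of Ho--Li. Your Mayer--Vietoris computation is one clean way to supply the missing details, and your use of $e$- and $\sigma$-invariance for the second assertion is exactly what the paper has in mind.

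One minor remark: since the preceding lemma already gives $\pi_1(X_{\Lambda,\lambda}(1/m)) = \pi_1(X\setminus\Lambda)/N(\mu_\Lambda\lambda'^m)$, you could reach your key formulas
\[
H_1(X)=H_1(Y)/\langle\iota_Y(\mu_\Lambda)\rangle,\qquad H_1(X')=H_1(Y)/\langle\iota_Y(\mu_\Lambda)+m\,\iota_Y(\lambda')\rangle
\]
by simply abelianizing that statement (and applying it also with $m=0$), rather than running Mayer--Vietoris from scratch. This is presumably what the authors intend by ``direct corollary''; your route is equivalent but slightly longer.
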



The following result is also due to C.-I. Ho and T.J. Li \cite{HoLi}. 

\begin{theorem}\label{Kod:Luttinger}
The symplectic Kodaira dimension $\kappa^s$ is unchanged under Luttinger surgery.
\end{theorem}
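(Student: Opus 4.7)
My plan is to reduce the invariance of $\kappa^s$ under Luttinger surgery to the invariance of two numerical quantities, $K_\omega^2$ and $K_\omega\cdot[\omega]$, since together these determine $\kappa^s$ by Definition \ref{sym Kod}, and then to address minimality so that the definition via minimal models applies consistently on both sides.

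The invariance of $K_\omega^2$ is immediate from the identity $c_1^2(X)=2e(X)+3\sigma(X)$ valid for any almost complex 4-manifold, combined with the lemma preceding the theorem which records that $e$ and $\sigma$ are preserved by Luttinger surgery. For the pairing $K_\omega\cdot[\omega]$, I would exploit the local structure of the surgery. By the Auroux--Donaldson--Katzarkov construction, the new symplectic form $\omega'$ on $X':=X_{\Lambda,\lambda}(1/m)$ agrees with $\omega$ on $V:=X\setminus\nu\Lambda$, and an $\omega$-compatible almost complex structure may be chosen so that, after a cutoff in $\nu\Lambda$, the canonical classes $K_\omega$ and $K_{\omega'}$ restrict to the same class on $V$. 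Two Lagrangian facts then come into play: $[\omega]\cdot[\Lambda]=0$ by the Lagrangian condition, and $c_1(TX)|_\Lambda=0$ because $TX|_\Lambda\cong T\Lambda\oplus T^*\Lambda$ and $\Lambda\cong T^2$ has trivial tangent bundle. The same vanishings hold for the Lagrangian torus $\Lambda'\subset X'$ produced as the core of the reglued $T^2\times D^2$. A Mayer--Vietoris comparison between $H^2(X)$, $H^2(X')$ and $H^2(V)$ then shows that $[\omega]$ and $[\omega']$ coincide on $V$, and that they differ from each other only by a class supported in the surgery region which pairs trivially with the canonical class; an identical argument for $K_\omega$ and $K_{\omega'}$ yields $K_\omega\cdot[\omega]=K_{\omega'}\cdot[\omega']$.

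To extend from the minimal case to the general case, I would argue that Luttinger surgery commutes with blowdown: any symplectic exceptional sphere $E\subset X$ satisfies $E\cdot\Lambda=0$ and, since a Lagrangian torus has trivial normal bundle, $E$ can be isotoped off a neighborhood of $\Lambda$; the surgery is then carried out entirely away from $E$, so exceptional classes correspond bijectively and the minimal models of $X$ and $X'$ are themselves related by the same Luttinger surgery. Applying the previous two steps to the minimal models yields $\kappa^s(X)=\kappa^s(X')$. The main obstacle will be making the cohomological comparison for $K\cdot[\omega]$ fully rigorous: one must carefully track how the surgery gluing map $\phi$, with its twist by $m[\lambda']+[\mu_\Lambda]$, acts on the restriction maps to $H^*(\partial\nu\Lambda)\cong H^*(T^3)$, to be sure no spurious contribution is hidden in the reglued solid torus bundle. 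Should the direct de Rham argument become cumbersome, I would fall back on Taubes' SW $=$ Gr: the sign and vanishing of $K\cdot[\omega]$ are detected by $J$-holomorphic representatives of the canonical class, which generically avoid the Lagrangian $\Lambda$, so the characterization is intrinsic to the symplectic structure on $V$ and therefore manifestly preserved by the surgery.
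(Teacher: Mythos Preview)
The paper does not supply a proof of this theorem; it merely records the result and attributes it to Ho and Li \cite{HoLi}. So there is no in-paper argument to compare against, and your proposal should be measured against the proof in that reference.

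Your treatment of $K_\omega^2$ is correct and standard. Your sketch for $K_\omega\cdot[\omega]$ is also in the right spirit and is essentially how Ho--Li proceed: both $[\omega]$ and $c_1$ vanish on the Lagrangian torus, so the pairing can be computed on the complement and is unchanged by the regluing. The Mayer--Vietoris bookkeeping you flag as the ``main obstacle'' is indeed where the care is needed, but it can be carried out.

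The genuine gap is in your handling of minimality. Your assertion that ``any symplectic exceptional sphere $E\subset X$ satisfies $E\cdot\Lambda=0$'' is unjustified: nothing forces a Lagrangian torus to be homologically orthogonal to an exceptional class, and easy examples in blowups of ruled surfaces show it can fail. Even if $[E]\cdot[\Lambda]=0$ held, symplectically isotoping $E$ off a tubular neighborhood of $\Lambda$ is a nontrivial step you have not supplied. The SW$=$Gr fallback does not rescue this either: a $J$-holomorphic representative of $K_\omega$ has no reason to avoid $\Lambda$ for generic $J$, and in any case controlling the sign of $K\cdot\omega$ alone does not settle minimality of the surgered manifold. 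Ho--Li's actual argument does not attempt to commute the surgery with blowdowns; they treat the $\kappa^s=-\infty$ case separately via the rational/ruled classification and establish directly that Luttinger surgery preserves minimality. You should consult \cite{HoLi} for those details rather than relying on the displacement argument sketched here.
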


\subsection{Complexity of fundamental group for small manifolds}
Gompf \cite{Gom} shows that any finitely presented group is the fundamental group of some minimal symplectic $4$-manifold. We can choose these manifolds so that $c_1^2=0$. His technique for construction is the symplectic sum, which is often called the {\it fiber sum}.
\smallskip
\smallskip

\noindent {\bf Symplectic sum:} Let $(X_1, \ \omega_{1})$ and $(X_2, \ \omega_{2})$ be closed symplectic $4$-dimensional manifolds. Each of them contains an embedded surface $F_j\subset X_j$ with normal bundle $\nu_j$. Assume that the Euler class of $\nu_i$ satisfy $e(\nu_1) +  e(\nu_2) = 0$. Then for any choice of an orientation reversing $\psi: \nu_1 \cong \nu_2$, the \emph{symplectic  sum} of $X_1$ and $X_2$ along $Y$ is the manifold $(X_1\setminus \nu_1)\cup_{\psi} (X_2\setminus \nu_2)$ and is denoted by $Z=X_1 \#_{\psi} X_2$. The symplectic sum operation provides a natural isotopy class of symplectic structures on $Z$. We have
$$(e, \sigma)(X_1 \#_{\psi} X_2)=(e, \sigma)(X_1)+(e, \sigma)(X_2)+ (4g-4, 0)$$

\smallskip
\smallskip

It is then very natural to ask the ``minimal size" of such manifolds with a given fundamental group $G$. Since the first Betti number $b_1(G)$ is determined by the group $G$ as the rank of $\mathbb R\otimes (G/[G, G])$, a natural quantity to describe the size is $b^+$. In this sense, as discussed in Section \ref{invariants}, Gompf's manifolds all have large size.

In this section, we prove Theorem \ref{main} which provides manifolds with small size and could be viewed as a stratification of Gompf's result according to $b^+$. 

We start with the following result, which verifies Question \ref{min b+} when $G$ is a free product of cyclic groups.

\begin{theorem}\label{freegroup}
For any finitely generated group $G$ of the form $({\Large{*}}_{i=1}^n \mathbb{Z}) \Large{*} ({\Large{*}}_{i=1}^k \mathbb Z_{p_i})$, we have a minimal symplectic $4$-manifold $X(G)$ with $b^+(X(G))=b_1(G)+1$, $\pi_1(X(G))=G$ and $c_1^2(X(G))=0$. Especially, any finitely generated abelian group could be realized as first homology of such symplectic $4$-manifolds.
\end{theorem}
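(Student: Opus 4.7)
The plan is to construct $X(G)$ by starting from a base symplectic $4$-manifold with a large fundamental group and then performing a calibrated sequence of Luttinger surgeries that cuts $\pi_1$ down to $G$ while preserving $\chi_h = 1$ and $c_1^2 = 0$.

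\emph{The base manifold.} Set $g = n + k$ and consider the K\"ahler surface $\Sigma_g \times T^2$, which has $(e,\sigma) = (0,0)$, $\pi_1 = \pi_1(\Sigma_g) \times \Z^2$, $b_1 = 2g+2$, and $b^+ = 2g+1$. I would symplectically fiber-sum this with $E(1)$ along the torus $\{pt\} \times T^2$ and a smooth elliptic fiber. Since $\pi_1(E(1) \setminus \nu F) = 1$ (the fiber meridian is killed by a section $E_i \cong S^2$, and the two fiber loops are vanishing cycles of nodal fibers), van Kampen yields $\pi_1(M_0) = \pi_1(\Sigma_g)$ for the resulting $M_0$, while additivity of $(e,\sigma)$ under $T^2$-fiber sum gives $(e,\sigma)(M_0) = (12,-8)$. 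Thus $c_1^2(M_0) = 0$, $\chi_h(M_0) = 1$, $b_1(M_0) = 2(n+k)$, and $b^+(M_0) = 2(n+k) + 1$.

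\emph{Luttinger surgeries.} Inside $\Sigma_g \times T^2 \subset M_0$ there are $2g$ natural Lagrangian tori $T_j = \gamma_j \times S^1$, where $\{\gamma_j\}$ is a standard symplectic basis of $H_1(\Sigma_g)$ and $S^1 \subset T^2$ is one of the coordinate circles. I would perform $n + 2k$ Luttinger surgeries on such tori: first, $n + k$ of them with coefficient $1/1$, chosen in Baldridge--Kirk style so that the imposed commutator relations resolve the surface relation of $\pi_1(\Sigma_g)$ and collapse pairs of generators, leaving the free product $F_n * (*_{i=1}^{k} \Z)$; then $k$ further surgeries with coefficients $1/p_i$ along tori whose Lagrangian push-off $\lambda_i'$ represents the $i$-th torsion generator, imposing $x_{n+i}^{p_i} = 1$. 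By the preceding lemma, $(e,\sigma)$ is invariant under each surgery, so $c_1^2(X(G)) = 0$ and $\chi_h = 1$ throughout. Lemma \ref{3case} pegs $b^+ - b_1$ at $1$, and if each of the $n + 2k$ surgeries strictly reduces $b_1$ by $1$, then $b_1(X(G)) = n$ and $b^+(X(G)) = n+1$, as required. Minimality follows from minimality of $\Sigma_g \times T^2$ and $E(1)$, Usher's minimality theorem for symplectic fiber sums, and the fact that Luttinger surgery preserves minimality.

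\emph{Main obstacle.} The key difficulty is showing that the final fundamental group is \emph{exactly} $G$, rather than a proper quotient or a group carrying extra commutator relations. Each Luttinger surgery imposes a relation $\lambda'^m = \mu_\Lambda^{-1}$, and $\mu_\Lambda$ is in general only a commutator in $\pi_1(M_0 \setminus \Lambda)$ whose precise expression depends on the Lagrangian framing and on the order of prior surgeries. A clean strategy is to choose the framings of the first $n + k$ commutator surgeries so that their cumulative effect reproduces the Baldridge--Kirk identification of the surface relation with $T^2$-commutators, and to arrange the $k$ torsion tori so that their meridians $\mu_{\Lambda_i}$ are null-homotopic after the first batch of surgeries, making the torsion relation collapse to the pure $x_{n+i}^{p_i} = 1$. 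Verifying this in detail requires a careful, step-by-step Seifert--van Kampen bookkeeping, and this is where most of the technical weight of the argument will sit.
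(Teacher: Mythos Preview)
Your proposal is essentially the paper's own proof: start from $\Sigma_{n+k}\times T^2$, perform Luttinger surgeries $(a_i'\times c',\,a_i',\,-1)$ for all $i$ and $(b_{n+j}'\times c'',\,b_{n+j}',\,-1/p_j)$ for $1\le j\le k$, and fiber-sum with $E(1)$ along $\{pt\}\times T^2$ (the paper surgers first and sums second, but since the regions are disjoint the order is immaterial). The ``main obstacle'' you flag dissolves once the surgeries are written this explicitly: the fiber sum sets $c=d=1$, so the Luttinger relations $[b_i^{-1},d^{-1}]=a_i$ and $[a_{n+j}^{-1},d]=b_{n+j}^{p_j}$ collapse to $a_i=1$ and $b_{n+j}^{p_j}=1$, the surface relation $\prod_i[a_i,b_i]=1$ becomes vacuous, and no separate Baldridge--Kirk mechanism is needed to ``resolve'' it.
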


We notice that except for the cyclic group $\mathbb Z_i$, all the other groups are not the fundamental group of K\"ahler surfaces.


\begin{proof}
Our main technique will be $\emph{Luttinger surgery}$. Let us fix integers $p_{i} \geq 0$ and $q_{i} \geq 0$ , where $1 \leq i \leq g$. Let $$Y_{g}(1/p_{1},1/q_{1}, \cdots, 1/p_{g}, 1/q_{g})$$ denote symplectic $4$-manifold obtained by performing the following $2g$ Luttinger surgeries on $\Sigma_{g}\times \mathbb{T}^2$:

\begin{equation}\label{eq: Luttinger surgeries for Y_g(m)} 
\begin{split}
(a_1' \times c', a_1', -1/p_{1}), \ \ (b_1' \times c'', b_1', -1/q_{1}),\\  \nonumber
(a_2' \times c', a_2', -1/p_{2}), \ \ (b_2' \times c'', b_2', -1/q_{2}),\\  \nonumber
 \cdots, \ \ \cdots, \ \ \cdots, \ \  \cdots, \ \ \cdots, \ \ \cdots, \ \  \\ \nonumber
(a_g' \times c', a_g', -1/p_{g}), \ \ (b_g' \times c'', b_g', -1/q_{g}).
\end{split}
\end{equation}

Here, $a_i,b_i$ ($i=1,\cdots, g$) and $c,d$\/ denote the standard generators of $\pi_1(\Sigma_{g})$ and $\pi_1(\mathbb{T}^2)$, respectively. Since all the surgeries above are Luttinger surgeries, $Y_{g}(1/p_{1},1/q_{1},\cdots, 1/p_{g}, 1/q_{g})$ is a minimal symplectic 4-manifold with $\kappa^s=1$. The fundamental group of $Y_{g}(1/p_{1},1/q_{1},\cdots, 1/p_{g}, 1/q_{g})$ is generated by $a_i,b_i$ ($i=1,\cdots, g$) and $c,d$, with the following relations: 

\begin{gather}\label{Luttinger relations for Y_1(m)}
[b_1^{-1},d^{-1}]=a_1^{p_{1}},\ \  [a_1^{-1},d]=b_1^{q_{1}},\ \
[b_2^{-1},d^{-1}]=a_2^{p_{2}},\ \  [a_2^{-1},d]=b_2^{q_{2}},\\ \nonumber
\cdots, \ \ \cdots, \ \ \cdots \  \\ \nonumber
[b_{g}^{-1},d^{-1}]=a_g^{p_{g}},\ \  [a_{g}^{-1},d]=b_g^{q_{g}},\\ \nonumber
[a_1,c]=1,\ \  [b_1,c]=1,\ \ [a_2,c]=1,\ \  [b_2,c]=1,\\ \nonumber
\cdots, [a_g,c]=1,\ \  [b_g,c]=1,\\ \nonumber
[a_1,b_1][a_2,b_2]\cdots[a_g,b_g]=1,\ \ [c,d]=1.
\end{gather}

Let $T \subset Y_{g}(1/p_{1},1/q_{1},\cdots, 1/p_{g}, 1/q_{g})$ denote a genus one symplectic surface that descends from the surface $ pt \times \mathbb{T}^2$ in $\Sigma_{g}\times \mathbb{T}^2$.

Next, we form the symplectic sum of $Y_{g}(1/p_{1},1/q_{1},\cdots, 1/p_{g}, 1/q_{g})$ along the torus $T$ with an elliptic surface $E(1)$. 
\begin{equation*}
X_{g}(p_{1}, q_{1}, \cdots, p_{g}, q_{g})= Y_{g}(1/p_{1},1/q_{1},\cdots, 1/p_{g}, 1/q_{g})\#_{id}(E(1))
\end{equation*}

Choose a base point $x$ of $\pi_1(Y_{g}(1/p_{1},1/q_{1},\cdots, 1/p_{g}, 1/q_{g})$ on $\partial(\nu T)$ such that $\pi_1(Y_{g}(1/p_{1},1/q_{1},\cdots, 1/p_{g}, 1/q_{g})\setminus\nu T,x)$ is normally generated by $a_i,b_i$ ($i=1, \cdots, g$) and $c,d$.  Since the symplectic torus $ pt \times \mathbb{T}^2$ is disjoint from the neighborhoods of $2g$ Lagrangian tori listed above, all the relations in (\ref{Luttinger relations for Y_1(m)}) continue to hold in $\pi_1(Y_{g}(1/p_{1},1/q_{1},\cdots, 1/p_{g}, 1/q_{g})\setminus\nu T)$ except the relation $[a_1,b_1][a_2,b_2]\cdots[a_g,b_g]=1$. The surface relation $[a_1,b_1][a_2,b_2]\cdots[a_g,b_g]$ is no longer trivial. In $\pi_1(Y_{g}(1/p_{1},1/q_{1},\cdots, 1/p_{g}, 1/q_{g}) \setminus\nu T)$, it represents a meridian of $T$.

Since $\pi_1(E(1) \setminus (\nu(T)) = 1$, after the fiber sum we have $c = d = 1$ in the fundamental group of $X_{g}(p_{1}, q_{1}, \cdots, p_{g}, q_{g})$. As a consequence of this, we obtain the following presentation for the fundamental group of $X_{g}(p_{1}, q_{1}, \cdots, p_{g}, q_{g})$:

\begin{gather}\label{Relations}
a_1^{p_1} = 1, \nonumber  \\ 
b_1^{q_1} = 1, \nonumber \\
\cdots, \nonumber \\
\cdots, \nonumber \\
a_g^{p_g} = 1, \nonumber \\ 
b_g^{q_g} = 1, \nonumber \\
\Pi_{j=1}^{g} [a_j, b_j] = 1. \nonumber \\
\nonumber
\end{gather}

By setting  $p_1 = q_1 = \cdots = p_g = q_g = 1$, we obtain a symplectic $4$-manifold $X_{g}(1, 1, \cdots, 1, 1)$ with trivial fundamental group. The symplectic $4$-manifold $X_{g}(1,0, \cdots 1, 0)$ has the fundamental group $F_{g}$, a free group of rank $g$. In general, the symplectic $4$-manifold $X_{g, k}(1,0, \cdots 1, 0, 1, p_1, \cdots, 1, p_k)$ has the fundamental group $({\Large{*}}_{i=1}^n \mathbb{Z}) \Large{*} ({\Large{*}}_{i=1}^k \mathbb Z_{p_i})$. The manifold $X_{g, k}(1,0, \cdots 1, 0, 1, p_1, \cdots, 1, p_k)$ has Euler number $e=12$ and signature $\sigma=-8$. Since the first Betti number $b_1=g$, we have $b^+=g+1$ and $b^-=g+9$.
\end{proof}



Now, we are ready to prove Theorem \ref{main}. For the convenience of readers, let us rephrase it in the below.
\begin{theorem} \label{stratGompf}
For any finitely presented group $G$ which is dual finite torsion, there is a finitely presented group $G'$ with surjective group homomorphism $G'\twoheadrightarrow G$, such that there is a minimal symplectic $4$-manifolds $X(G)$ with $b^+(X(G))=b_1(G)+1$, $\pi_1(X(G))=G'$ and $c_1^2(X(G))=0$. Moreover, $b_1(G)=b_1(G')$.

\end{theorem}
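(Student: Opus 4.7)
The plan is to extend the construction in the proof of Theorem~\ref{freegroup} by performing additional Luttinger surgeries along Lagrangian tori that are geometric realisations of the relators of $G$. Fix a dual finite torsion presentation $G = \langle x_1, \ldots, x_k \mid l_1, \ldots, l_m \rangle$; by hypothesis the exponent matrix has rank $m$, so $b_1(G) = k - m$.

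Beginning with $\Sigma_k \times \mathbb{T}^2$, first perform $k$ Luttinger surgeries of coefficient $-1$ along the standard Lagrangian tori $(a_i' \times c', a_i')$ as in the free-group case, introducing relations $[b_i^{-1}, d^{-1}] = a_i$. The $b_i$'s will play the role of the $x_i$'s. Next, for each relator $l_j$ choose a loop $\gamma_j$ on $\Sigma_k$ whose homotopy class, expressed in terms of the $b_i$'s after the later identifications $a_i = 1$, equals $l_j(b_1, \ldots, b_k)$. After small perturbations we may arrange the $\gamma_j$ to be embedded, mutually disjoint, and disjoint from the tori already used; each $\gamma_j \times c''$ is then a Lagrangian torus carrying its natural product framing, and we perform a further $-1$ Luttinger surgery along each. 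This introduces a relation of the form $\mu_j = \gamma_j$ where $\mu_j$ is the meridian of $\gamma_j \times c''$; for product Lagrangian tori, this meridian lies in the commutator subgroup modulo $d$, so it becomes trivial after the final fiber sum with $E(1)$ along a parallel copy of $\mathrm{pt}\times \mathbb{T}^2$, which forces $c = d = 1$.

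The resulting manifold $X(G)$ is minimal by Usher's theorem (it is a torus fiber sum of minimal pieces along an essential fiber) and has $c_1^2 = 0$, since Luttinger surgery preserves $e$ and $\sigma$ while the fiber sum contributes $(e,\sigma) = (12,-8)$, giving $2e + 3\sigma = 0$. Its fundamental group $G'$ is presented by $b_1, \ldots, b_k$ together with the relators $l_j = 1$ and additional commutator-type relations from the meridian formulae of the Luttinger surgeries; killing these extras supplies the surjection $G' \twoheadrightarrow G$. For the numerical invariants, $\Sigma_k \times \mathbb{T}^2$ has $b_1 = 2k + 2$ and $b^+ = 2k+1$. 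Each of the $k$ generator surgeries drops $b_1$ (hence $b^+$, by Lemma~\ref{3case}) by one; each of the $m$ relator surgeries drops $b_1$ by one \emph{precisely because} the dual finite torsion hypothesis makes the $m$ relators linearly independent in the abelianisation, so each successive surgery kills a fresh $H_1$-class; and the fiber sum with $E(1)$ kills $c$ and $d$, dropping $b_1$ by a further two. The accounting gives $b_1(G') = 2k+2 - k - m - 2 = k - m = b_1(G)$ and $b^+(X(G)) = k - m + 1 = b_1(G) + 1$, as required.

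The key obstacle is the geometric step of realising each relator $l_j$ by an embedded simple closed curve $\gamma_j$ on $\Sigma_k$ such that the resulting Lagrangian tori are mutually disjoint and admit compatible framings. For complicated relators, or when $m$ is large compared with $k$, a direct choice on $\Sigma_k$ may not suffice; in that case one replaces $\Sigma_k$ by $\Sigma_{k+N}$ with $N \gg 0$, uses the extra genus both to represent each word by an embedded loop and to provide the required disjointness, and then kills the $2N$ auxiliary generators by additional $-1$ Luttinger surgeries whose homological contributions cancel against the enlargement. The dual finite torsion condition is invoked crucially in this final book-keeping to ensure that exactly $m$ independent $H_1$-classes are removed by the relator surgeries, so that $b^+$ lands on the nose at $b_1(G)+1$ rather than dropping further.
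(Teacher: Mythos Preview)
Your overall strategy---Luttinger surgeries on $\Sigma\times\mathbb{T}^2$ followed by fiber sum with $E(1)$, with the $b_i$'s playing the role of generators and further surgeries along curves $\gamma_j$ encoding the relators---is exactly the paper's approach. But two of the steps you flag as ``obstacles'' and then wave away are in fact where the entire content of the proof lies, and your treatment of them contains genuine gaps.

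First, an arbitrary relator word in the $b_i$'s is almost never represented by an \emph{embedded} simple closed curve on $\Sigma_k$ (e.g.\ $b_1^2$ never is), so the construction on $\Sigma_k$ as written simply does not exist. You acknowledge this and propose passing to $\Sigma_{k+N}$, but you give no construction. The paper supplies one: the \emph{bridge move}, which resolves each self-intersection of the immersed $\gamma_j$ by routing one strand over a new handle, yielding an embedded $\gamma_j'$ on $\Sigma_{k'}$, $k'>k$.

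Second, and more seriously, once you add extra handles your surgery bookkeeping is no longer the naive count you give. You must perform $2(k'-k)$ additional Luttinger surgeries to kill the new $a_g,b_g$; but the Lagrangian tori $b_g'\times c''$ and $\gamma_j'\times c'''$ now puncture the \emph{same} dual tori $a_g\times d$, producing mixed relations of the form $[a_g^{-1},d]=\gamma_{k_g}'\,b_g^{\pm 1}$ rather than independent ones. After setting $d=1$, these do \emph{not} automatically kill the bridge generators; instead they identify each $b_{g_i}$ with $\gamma_i'$, which survives as a new generator of $H_1$. The paper's alternating sign choice $(-1)^{c(g)-1}$ and the parity condition (each $\gamma_i'$ crosses an odd number of bridges) are engineered precisely so that, in the abelianisation, these extra generators satisfy $\gamma_1'^{m_{i1}}\cdots\gamma_m'^{m_{im}}=1$ for each $i\le k$; the dual finite torsion hypothesis (rank of $(m_{ij})$ equals $m$) is then what forces every $\gamma_i'$ to be torsion, so that $b_1(G')=b_1(G)$. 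Your invocation of dual finite torsion---``so each successive relator surgery kills a fresh $H_1$-class''---misidentifies where the hypothesis enters: it is needed not to make the relator surgeries independent, but to kill the new $H_1$-classes created by the genus you had to add.
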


\begin{proof}

Now, let $G = \langle x_{1}, \ \cdots, \ x_{k} \ | \ l_{1}, \ \cdots, \ l_{m} \rangle$ be any finitely presented group whose Abelianization is denoted by $AG$. To construct manifolds with fundamental group $G$, we use similar strategy as for Proposition \ref{freegroup}. 

A new ingredient is the {\it bridge move} operation, as illustrated in Figure \ref{fig:bridge1}. It is used to resolve the self-intersection points of an immersed loop on surface $\Sigma$. Namely, for each self-intersection point $x$ with two segments $l$ and $m$ intersecting locally, we increase the genus of $\Sigma$ by adding a ``bridge'' with $l$ going across the bridge and $m$ going under the bridge. This operation of resolving self-intersection points is introduced in \cite{ABKP} (see also \cite{Kork}).

\begin{figure}
\centering
\includegraphics[scale=.43]{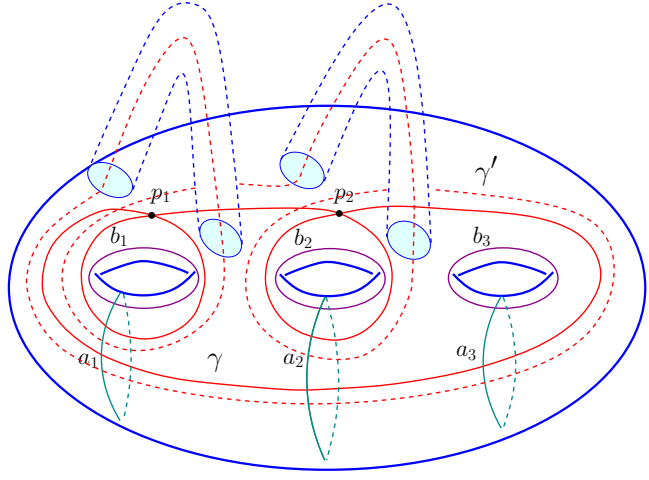}
\caption{Bridge move on $\Sigma_3$ for the relation $b_{1}b_{2}b_{3}b_{2}b_{1}$}
\label{fig:bridge1}
\end{figure}

With this operation in mind, we prove Theorem \ref{stratGompf}.
For each word $l_i$ with $i=1, \cdots, m$, let $\gamma_i$ be a smooth immersed oriented circle on $F=\Sigma_{k}$ representing the word $l_i$ with generators $x_j$ replaced by $b_j$. Possibly after perturbation, we could choose the loops $\gamma_i$ such that only two segments intersect locally at each self-intersection point.

Now, for each self intersection of $\gamma_i$, we resolve it by adding a bridge over it which increase the genus of $F$ by one (see Figure \ref{fig:bridge1}). We will denote the modified curves by $\gamma_i'$. After finitely many such ``cross bridge'' moves, $F$ is changed to a surface $F'$ of genus $k'>k$ with $\gamma_i'$ all embedded curves. We want each $\gamma_i'$ passes through odd number of bridges. If not, we add one more bridge along any (smooth) point of the curve $\gamma_i'$. Notice that each bridge would have only one $\gamma_i'$ passing through. For each increased genus, we also have a pair of new generators $a_g$ and $b_g$. We order these bridges as the following. Let $\gamma'_i$ pass through the handle $a_g\times b_g$ and $\gamma'_j$ pass through the handle  $a_{g'}\times b_{g'}$. If $i>j$, then $g>g'$. Furthermore, we assume that the first bridge $\gamma_i'$ passes through is $a_{g_i}\times b_{g_i}$.

We perform Luttinger surgeries
$$(a_g' \times c', a_g', -1), \ \ (b_g' \times c'', b_g', (-1)^{c(g)-1}), \ \ k+1\le g\le k'.$$
Here $c(g)=\min_{1\le i\le m}\{g-g_i+1| g-g_i\ge 0\}$.

As for free groups, we again take the same $2k$ Lagrangian tori in $\Sigma_{k'}\times \mathbb{T}^2$ corresponding to $a_g$ and $b_g$ with $1\le g\le k$, and choose $p_1=\cdots =p_k=1$, $q_1=\cdots =q_k=0$. 

In addition to applying these $2k'$ Luttinger surgeries to these standard tori, we apply $m$ more surgeries:

$$(\gamma_1' \times c''', \gamma_1', -1), \cdots,  (\gamma_m' \times c''', \gamma_m', -1).$$


Let us denote the manifold after the $2k'+m$ surgeries by $Y(G)$. Then we apply symplectic sum of $Y(G)$ along the torus $T$ descending from $ pt \times \mathbb{T}^2$ with $E(1)$, and denote the resulting symplectic manifold by $X(G)$.

The ``dual tori'' $a_{g}\times d$ for $g>k$ is now twice punctured (corresponding to surgeries $(b_g' \times c'', b_g', (-1)^{c(g)-1})$ and $(\gamma_{k_g}' \times c''', \gamma_{k_g}', -1)$), and would produce relation $[a_{g}^{-1},d]=\gamma'_{k_g}b_g^{(-1)^{c(g)}}$ in the fundamental group of $Y(G)$. Here $\gamma'_{k_g}$ denotes the $\gamma_i'$ passing through the bridge $a_g\times b_g$.



After fiber summing, $c=d=1$. These relations would become 
\begin{equation}\label{gammarelation}
\gamma'_{k_g}b_g^{(-1)^{c(g)}}=1, g>k.
\end{equation}

Let us fix $k_g=i$ in the following. After abelianization, $\gamma_i'$ could be written as $\gamma_ib_{g_i}\cdots b_{g_{i+1}-1}$. Since each $\gamma_i'$ passes through odd number of bridges, $g_{i+1}-g_i$ is an odd number. Now, the relations \eqref{gammarelation} for $k_g=i$ read as  
\begin{equation}\label{i-relation}
b_{g_i}=\gamma_i', \quad b_{g_i+1}=\gamma_i'^{-1}, \cdots, \quad b_{g_{i+1}-1}=\gamma_i'^{(-1)^{g_{i+1}-g_i-1}}=\gamma_i'.
\end{equation}
Hence we have 
\begin{equation}\label{product}
b_{g_i}\cdots b_{g_{i+1}-1}=\gamma_i'. 
\end{equation}
Recall the relation $\gamma_i'b_{g_i}^{-1}=1$ becomes $\gamma_ib_{g_i}\cdots b_{g_{i+1}-1}\cdot b_{g_i}^{-1}=1$ after abelianization. After replacing \eqref{product} into this new expression, we have $\gamma_i\gamma_i'b_{g_i}^{-1}=1$. Since $\gamma_i'b_{g_i}^{-1}=1$ by \eqref{gammarelation}, we finally have 
 relations $\gamma_i=1$ in the abelianization of $\pi_1(X(G))$. By \eqref{i-relation}, for each relator $l_i$ in the presentation of $G$, we introduce only one more free generator $b_{g_i}=\gamma_i'$.
 

The other relations in $\pi_1(Y(G))$ are almost the ones in (\ref{Luttinger relations for Y_1(m)}) (with $g=k$). The only differences are the commutators $[a_i^{-1}, d]$ would be equal to $ P_i(\gamma_1', \cdots, \gamma_m')$ since $q_i$ are set to be zero, where $P_i(\gamma_1', \cdots, \gamma_m')$ is a monomial of the words $\gamma_1', \cdots, \gamma_m'$. There relations are determined as following. We write the relators $l_i$, after abelianization, as $x_1^{m_{1i}}\cdots x_k^{m_{ki}}$. Then $P_i(\gamma_1', \cdots, \gamma_m')=\gamma_1'^{m_{i1}}\cdots \gamma_m'^{m_{im}}$. Hence if the matrix $(m_{ij})_{1\le i\le k, 1\le j\le m}$ has rank $m$, we have $\gamma_1', \cdots, \gamma_m'$ are all of finite order.


To summarize, we have $b_1, \cdots, b_k$ and $\gamma_1', \cdots, \gamma_m'$ as the generators in the abelianization of $\pi_1(X(G))$. In addition to the commutator relations, we have $\gamma_1=\cdots=\gamma_m=1$. All $\gamma_i$ are words of $b_1, \cdots, b_k$. Moreover, we also have $\gamma_1', \cdots, \gamma_m'$ are of finite order.
In other words, we have constructed symplectic $4$-manifold $X(G)$ with the first homology $H_1(X(G), \mathbb Q)=AG\otimes \mathbb Q$, where $AG$ is the abelianization of $G$. 
Since Luttinger surgery would preserve minimality, $Y(G)$ is minimal. From Usher's theorem \cite{U1}, $X(G)$ is minimal.

Let us denote $G'=\pi_1(X(G))$. As discussed above, $b_1, \cdots, b_{k'}$ are generators of this group. The surjective group homomorphism $G'\twoheadrightarrow G$ is the natural one: sending $b_1, \cdots, b_k$ to $x_1, \cdots, x_k$ and $b_{k+1}, \cdots, b_{k'}$ to $1$ (remember the relations in \eqref{i-relation}).

Finally, let us calculate their topological quantities. Since the Luttinger surgeries keep the Euler number and signature, $e(Y(G))=\sigma(Y(G))=0$. After fiber summing, $e(X(G))=12$ and $\sigma(X(G))=-8$, thus $c_1^2=2e+3\sigma=0$. 
Finally, since $b_1(X(G))=b_1(G)$, we have $b^+(X(G))=b_1(G)+1$ and $b^-(X(G))=b_1(G)+9$. This completes the proof of Theorem \ref{stratGompf}. 
\end{proof}


When varying $G$ with the same $\mathbb Q$-abelianization $AG\otimes \mathbb Q$, we would have infinitely many non-K\"ahler symplectic $4$-manifolds whose first $\mathbb Q$-homology groups are $AG\otimes \mathbb Q$.

If we fiber sum $X(G)$ along the torus $T$ descending from $ pt \times \mathbb{T}^2$ with $E(n)$ along with torus fiber, we would obtain symplectic $4$-manifolds with $H_1^{\mathbb Q}=AG\otimes \mathbb Q$ and $\chi_h=n$, i.e. $b^+=b_1+2n-1$.

\begin{remark}\label{moregenerators}
If $G = \langle x_{1}, \ \cdots, \ x_{k}, \ x_{k+1}, \cdots, \ x_{k+m} \ | \ l_{1}, \ \cdots, \ l_{m} \rangle$, where $l_i$ are words of  $x_{1}, \ \cdots, \ x_{k}$ only, then we have symplectic $4$-manifold $X(G)$ with $b^+(X(G))=b_1(G)+1$, $\pi_1(X(G))=G$ and $c_1^2(X(G))=0$.

To achieve this, we modify the construction for Theorem \ref{stratGompf} by starting with a genus $k$ surface with generators $a_1, b_1, \cdots, a_k, b_k$ and set the coefficients of Luttinger surgeries for $(b_{g_i}' \times c'', b_{g_i}')$ to be $0$.
\end{remark}

This following remark is essentially due to Jun Yu.

\begin{remark}\label{orbifoldgroup} As was mentioned before, the fundamental groups of an elliptic surface with $b^+=\chi_h=1$ are groups $E_{p_1, p_2, \cdots, p_k}=\{x_1, x_2, \cdots, x_k|x_1^{p_1}=x_2^{p_2}=\cdots=x_k^{p_k}=x_1x_2\cdots x_k=1\}$. Note that the possible quotients of all $E_{p_1, p_2, \cdots, p_k}$ and that of free product of cyclic groups are the same. Clearly, there is a surjection ${\Large{*}}_{i=1}^k \mathbb Z_{p_i}\rightarrow E_{p_1, p_2, \cdots, p_k}$. On the other hand,  $E_{p_1, p_2, \cdots, p_k, p_k, \cdots, p_1}$ surjects to ${\Large{*}}_{i=1}^k \mathbb Z_{p_i}$. This can be seen by adding the relations $x_{2k-i}=x_i^{-1}$, and then $x_1x_2\cdots x_{2k}=1$ holds automatically.

However, there are many $\mathbb Q$-perfect groups which are not the quotient of any $E_{p_1, p_2, \cdots, p_k}$ or equivalently of ${\Large{*}}_{i=1}^k \mathbb Z_{p_i}$. In fact, there are even torsion free (i.e. no finite order elements) $\mathbb Q$-perfect finitely presented groups. The congruence subgroups $K_{n,m}$ are the group of $n\times n$ integral matrices which are congruent to identity modulo $m$. These groups are finitely presented and with finite (but non-trivial) abelianization. When $m=p^{k}>>1$, they are torsion free. For example, $K_{3,4}$ is torsion free. Moreover, a finitely presented torsion-free simple (thus perfect) group is constructed in \cite{Rat}.

It is also worth mentioning that many interesting groups (e.g. all finite groups, free products of cyclic groups and Thompson groups $T$ and $V$) are quotients of certain $E_{p_1, p_2, \cdots, p_k}$.
\end{remark}

\subsection{Construction of symplectic $4$-manifolds with  $c_{1}^2 \geq 1$ and $H_1^{\mathbb Q} = AG\otimes \mathbb Q$} \label{sec:c}

Our construction strategy for Proposition \ref{freegroup} and Theorem \ref{stratGompf} could be applied to construct symplectic $4$-manifolds with $c_1^2\ge 1$ as well. In this section, we sketch the construction for this case.


To produce such examples, we can proceed as follows. Let us fix integers $n\geq 2$, $p_{i} \geq 0$ and $q_{i} \geq 0$ , where $1 \leq i \leq n$. Let $Y_{n}(1/p_{1},1/q_{1}, \cdots, 1/p_{n}, 1/q_{n})$ denote symplectic $4$-manifold obtained by performing $2n + 4$ Luttinger surgeries on $\Sigma_{n}\times \Sigma_{2}$ (\cite{FPS, AP}). These $2n+4$ surgeries consist of the following $8$ surgeries

\begin{eqnarray}\label{first 8 Luttinger surgeries}
&&(a_1' \times c_1', a_1', -1), \ \ \nonumber (b_1' \times c_1'', b_1', -1), \\ \nonumber
&&(a_2' \times c_2', a_2', -1), \ \ (b_2' \times c_2'', b_2', -1),\\ \nonumber
&&(a_2' \times c_1', c_1', +1/p_1), \ \ (a_2'' \times d_1', d_1', +1/q_1),\\ \nonumber
&&(a_1' \times c_2', c_2', +1/p_2), \ \ (a_1'' \times d_2', d_2', +1/q_2),
\end{eqnarray}
together with the following $2(n-2)$ additional Luttinger surgeries
\begin{gather*}
(b_1'\times c_3', c_3',  -1/p_3), \ \ 
(b_2'\times d_3', d_3', -1/q_3), \\  \cdots  ,\\
(b_1'\times c_n', c_n',  -1/p_n), \ \
(b_2'\times d_n', d_n', -1/q_n).
\end{gather*}
Here, $a_i,b_i$ ($i=1,2$) and $c_j,d_j$ ($j=1,\cdots,n$) denote the standard loops that generate $\pi_1(\Sigma_2)$ and $\pi_1(\Sigma_n)$, respectively. See Figure~\ref{fig:lagrangian-pair} for a typical Lagrangian tori along which the surgeries are performed. The Figure~\ref{fig:lagrangian-pair}, which we borrowed from \cite{AP2} (with a minor modification), illustrates a typical Lagrangian tori along which our Luttinger surgeries performed.  

\begin{figure}[ht]
\begin{center}
\includegraphics[scale=.89]{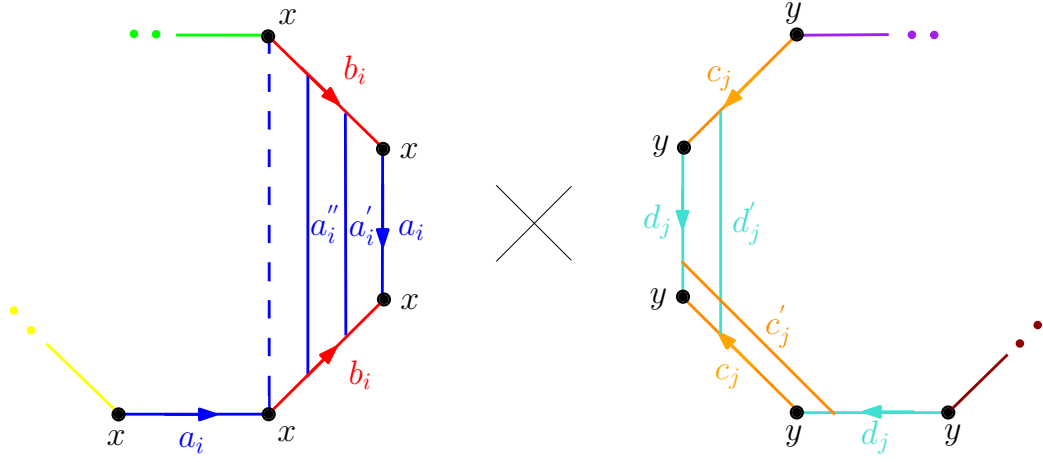}
\caption{Lagrangian tori $a_i'\times c_j'$ and $a_i''\times d_j'$}
\label{fig:lagrangian-pair}
\end{center}
\end{figure}

The Euler characteristic of $Y_{n}(1/p_{1},1/q_{1}, \cdots, 1/p_{n}, 1/q_{n})$ is $4n-4$ and its signature is $0$. The fundamental group $\pi_1(Y_{n}(1/p_{1},1/q_{1}, \cdots, 1/p_{n}, 1/q_{n}))$ is generated by $a_i,b_i,c_j,d_j$ ($i=1,2$ and $j=1,\cdots,n$) and the following relations hold in $\pi_1(Y_{n}(1/p_{1},1/q_{1}, \cdots, 1/p_{n}, 1/q_{n}))$:  

\begin{gather}\label{Luttinger relations}
[b_1^{-1},d_1^{-1}]=a_1,\ \  [a_1^{-1},d_1]=b_1,\ \  [b_2^{-1},d_2^{-1}]=a_2,\ \  [a_2^{-1},d_2]=b_2,\\ \nonumber
[d_1^{-1},b_2^{-1}]=c_1^{p_1},\ \ [c_1^{-1},b_2]=d_1^{q_1},\ \ [d^{-1}_2,b^{-1}_1]=c_2^{p_2},\ \ [c_2^{-1},b_1]=d_2^{q_2},\\ \nonumber
 [a_1,c_1]=1, \ \ [a_1,c_2]=1,\ \  [a_1,d_2]=1,\ \ [b_1,c_1]=1,\\ \nonumber
[a_2,c_1]=1, \ \ [a_2,c_2]=1,\ \  [a_2,d_1]=1,\ \ [b_2,c_2]=1,\\ \nonumber
[a_1,b_1][a_2,b_2]=1,\ \ \prod_{j=1}^n[c_j,d_j]=1,\\ \nonumber
[a_1^{-1},d_3^{-1}]=c_3^{p_3}, \ \ [a_2^{-1} ,c_3^{-1}] =d_3^{q_3}, \  \cdots, \ 
[a_1^{-1},d_n^{-1}]=c_n^{p_n}, \ \ [a_2^{-1} ,c_n^{-1}] =d_n^{q_n},\\ \nonumber
[b_1,c_3]=1,\ \  [b_2,d_3]=1,\ \cdots, \
[b_1,c_n]=1,\ \ [b_2,d_n]=1.
\end{gather}

The surfaces $\Sigma_2\times\{{\rm pt}\}$ and $\{{\rm pt}\}\times \Sigma_n$ in $\Sigma_2\times\Sigma_n$ descend to surfaces in $Y_{n}(1/p_{1},1/q_{1}, \cdots, 1/p_{n}, 1/q_{n})$. They are still  symplectic submanifolds in $Y_{n}(1/p_{1},1/q_{1}, \cdots, 1/p_{n}, 1/q_{n})$. We will denote their images by $\Sigma_2$ and $\Sigma_n$. Note that $[\Sigma_2]^2=[\Sigma_n]^2=0$ and $[\Sigma_2]\cdot[\Sigma_n]=1$. 

Another building block we would make use is $Z''(1,1)$ of \cite{AP}. If we denote $\alpha_i$, $i=1, \cdots, 4$ to be the standard generators in $\pi_1(\mathbb T^4)$, it is constructed from $\mathbb T^4\# \CPb$ by performing the following two Luttinger surgeries:
\begin{equation}\label{eq: Luttinger surgeries in T^4}
(\alpha_2'\times\alpha_3',\alpha_3',-1), \ \ (\alpha_2''\times\alpha_4',\alpha_4',-1).  
\end{equation}

There is a genus two symplectic surface $\bar \Sigma_2$ in $\mathbb T^4\# \CPb$ with homology class $[\alpha_1\times \alpha_2]+2[\alpha_3\times \alpha_4]-2[E]$. The inclusion $\bar \Sigma_2\rightarrow \mathbb T^4\# \CPb$ maps the generators of $\pi_1(\bar \Sigma_2)$ as:
\begin{equation}\label{eq: embedding of Sigma_2'}
\bar{a}_1  \mapsto  \alpha_1, \ \ 
\bar{b}_1  \mapsto  \alpha_2, \ \ 
\bar{a}_2  \mapsto  \alpha_3^2, \ \ 
\bar{b}_2  \mapsto  \alpha_4.
\end{equation}
It is still a symplectic submanifold of $Z''(1,1)$ with self-intersection zero.

We have the following theorem, which is Theorem 6 in \cite{AP}.

\begin{theorem}\label{Z''}
There exists a nonnegative integer\/ $s$ such that\/ $\pi_1(Z''(1, 1)\setminus\nu\bar{\Sigma}_2)$ is a quotient of the following finitely presented group
\begin{eqnarray}\label{pi_1(Sigma_2' complement in Z''(m))}
\langle
\alpha_1,\alpha_2,\alpha_3,\alpha_4, g_1, \cdots, g_s 
&\mid& \alpha_3=[\alpha_1^{-1},\alpha_4^{-1}],\, \alpha_4=[\alpha_1,\alpha_3^{-1}],\\
&&[\alpha_2,\alpha_3]=[\alpha_2,\alpha_4]=1
\rangle.  \nonumber
\end{eqnarray}
In\/ $\pi_1(Z''(1, 1)\setminus\nu\bar{\Sigma}_2)$, 
the images of\/ $g_1,\cdots,g_s$ are elements of the subgroup normally generated by the image of\/ $[\alpha_3,\alpha_4]$.  
\end{theorem}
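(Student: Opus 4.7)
The strategy is to first identify $\pi_1\bigl((\T^4 \# \CPb) \setminus \nu\bar{\Sigma}_2\bigr)$ via a direct van Kampen computation, and then incorporate the two Luttinger surgeries of \eqref{eq: Luttinger surgeries in T^4}. Because the Lagrangian tori $\alpha_2' \times \alpha_3'$ and $\alpha_2'' \times \alpha_4'$ can be arranged to lie in the complement of $\bar{\Sigma}_2$, performing the surgeries commutes with removing a tubular neighborhood of $\bar{\Sigma}_2$, so the two surgery relations translate directly into relations in $\pi_1\bigl(Z''(1,1) \setminus \nu\bar{\Sigma}_2\bigr)$.

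For the van Kampen step, I would realize $\bar{\Sigma}_2$ as the symplectic resolution in $\T^4$ of the singular surface $(\alpha_1 \times \alpha_2) \cup S$, where $S$ is a surface representing $2[\alpha_3 \times \alpha_4]$, with the transverse intersection points smoothed at the cost of two blow-ups, producing the $-2[E]$ contribution to the homology class of $\bar{\Sigma}_2$. Starting from $\pi_1(\T^4) = \langle \alpha_1,\alpha_2,\alpha_3,\alpha_4 \mid [\alpha_i,\alpha_j]=1\rangle$, the complement is generated by $\alpha_1,\alpha_2,\alpha_3,\alpha_4$ together with auxiliary generators $g_1,\dots,g_s$ arising from the local topology near the resolved intersections and the two exceptional divisors. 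A local analysis identifies each $g_i$, up to conjugation, with the image of $[\alpha_3, \alpha_4]$: away from the resolution loci a normal slice to $\bar{\Sigma}_2$ links precisely the $\alpha_3 \times \alpha_4$ direction, since this is the dominant transverse contribution in the homology class of $\bar{\Sigma}_2$. The commutator relations $[\alpha_2,\alpha_3]=1$ and $[\alpha_2,\alpha_4]=1$ survive because the tori $\alpha_2 \times \alpha_3$ and $\alpha_2 \times \alpha_4$ can be isotoped off both $\bar{\Sigma}_2$ and the exceptional divisors, while the remaining ambient commutators need not survive, as their spanning tori are forced to meet $\bar{\Sigma}_2$ and their failures are absorbed into the $g_i$.

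The second step applies Lemma~\ref{3case} to the Luttinger surgeries. In $\T^4 \setminus (\alpha_2' \times \alpha_3')$, the meridian of the Lagrangian torus is $[\alpha_1^{-1},\alpha_4^{-1}]$, the commutator of the two loops transverse to the torus with signs dictated by the Lagrangian framing; this formula persists in $(\T^4 \# \CPb) \setminus \nu\bar{\Sigma}_2$ since both the torus and these transverse loops are disjoint from $\bar{\Sigma}_2$. The $-1$-surgery relation $\mu_{\Lambda}(\alpha_3')^{-1}=1$ therefore becomes $\alpha_3 = [\alpha_1^{-1},\alpha_4^{-1}]$, and an analogous computation for the second surgery yields $\alpha_4 = [\alpha_1,\alpha_3^{-1}]$, the sign difference reflecting the distinct parallel copies $\alpha_2'$ versus $\alpha_2''$. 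Together with the surviving commutators these are precisely the four relations of \eqref{pi_1(Sigma_2' complement in Z''(m))}, and the $g_i$ remain as generators whose images lie in the normal closure of $[\alpha_3,\alpha_4]$ as required. The main obstacle is the van Kampen bookkeeping in the first step, specifically verifying the local-chart claim about the $g_i$: the $\alpha_3 \times \alpha_4$-direction enters with multiplicity two in the homology class of $\bar{\Sigma}_2$, so one must check carefully that every loop introduced by the two smoothings and the two blow-ups is expressible as a conjugate of $[\alpha_3, \alpha_4]$, rather than only of a nontrivial power of it.
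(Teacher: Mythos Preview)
The paper does not prove this theorem at all: it simply cites it as Theorem~6 of \cite{AP}. So there is no ``paper's proof'' to compare against here, and your proposal is an attempt to supply what the authors deliberately imported from elsewhere.

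Your two-step strategy---compute $\pi_1\bigl((\T^4\#\CPb)\setminus\nu\bar\Sigma_2\bigr)$ first, then impose the Luttinger relations---is the right outline and is essentially how \cite{AP} proceeds. The surgery step is fine: the meridians of the Lagrangian tori $\alpha_2'\times\alpha_3'$ and $\alpha_2''\times\alpha_4'$ are indeed the commutators you write, and the $(-1)$-surgeries give exactly the relations $\alpha_3=[\alpha_1^{-1},\alpha_4^{-1}]$ and $\alpha_4=[\alpha_1,\alpha_3^{-1}]$.

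There is, however, a factual error in your description of $\bar\Sigma_2$. There is only \emph{one} blow-up (the ambient manifold is $\T^4\#\CPb$, not $\T^4\#2\CPb$); the $-2[E]$ term does not come from two exceptional divisors but from the single exceptional class appearing with multiplicity two. Concretely, one takes a torus $T_1$ in class $[\alpha_1\times\alpha_2]$ and a connected torus $T_2$ in class $2[\alpha_3\times\alpha_4]$ (this is why $\bar a_2\mapsto\alpha_3^2$ in \eqref{eq: embedding of Sigma_2'}); they meet in two points, one of which is blown up and the other symplectically resolved. This changes the local picture you describe and, more importantly, the identification of the extra generators.

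The more substantive gap is the claim that each $g_i$ lies in the normal closure of $[\alpha_3,\alpha_4]$. Your justification---that a normal slice ``links precisely the $\alpha_3\times\alpha_4$ direction''---only applies near the $T_1$ piece of $\bar\Sigma_2$. Near the $T_2$ piece the na\"ive meridian is instead (a square root of) $[\alpha_1,\alpha_2]$, and it is not a priori a conjugate of $[\alpha_3,\alpha_4]$. What saves the day is that $\bar\Sigma_2$ is \emph{connected}, so all meridians lie in a single conjugacy class; together with the annulus that the exceptional sphere $E$ contributes (since $[\bar\Sigma_2]\cdot[E]=2$), this forces the various local meridians to be conjugate. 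You should make this explicit, because it is exactly the ``bookkeeping'' you flag as the main obstacle, and without it the statement about the $g_i$ is unproven.
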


We form the symplectic sum of $Y_{n}(1/p_{1},1/q_{1},\cdots, 1/p_{n}, 1/q_{n})$ along the genus two surface with $Z''(1,1)$. 

\begin{equation*}
X_{n}(p_{1}, q_{1}, \cdots, p_{n}, q_{n})= Y_{n}(1/p_{1},1/q_{1},\cdots, 1/p_{n}, 1/q_{n})\#_{id}Z''(1,1)
\end{equation*}

We claim the fundamental group of $X_{n}(1, 1, 1, 1, 1, 0 \cdots, 1, 0)$ is free group $\mathbb F_{n-2}$ with $n-2$ generators.

From Seifert-Van Kampen theorem,
the fundamental group is a quotient of the following group:  
\begin{equation}\label{pi_1(X_n(m))}
\frac{\pi_1(Y_{n}(1, 1, 1, 1, 1, 1/0, \cdots, 1, 1/0)\setminus\nu\Sigma_{2})\ast
\pi_1(Z''(1,1)\setminus\nu\bar{\Sigma}_2)}{\langle a_1=\alpha_1,\,
b_1=\alpha_2,\, a_2=a_3^2,\, 
b_2=\alpha_4,\, 
\mu(\Sigma_{2})=\mu(\bar{\Sigma}_2)^{-1} \rangle}.
\end{equation}

All the relations in (\ref{Luttinger relations}) continue to hold in (\ref{pi_1(X_n(m))}) except possibly for
$\prod_{j=1}^n[c_j,d_j]=1$.  This product may no longer be trivial and now represents a meridian of $\Sigma_{2}$.  

In (\ref{pi_1(X_n(m))}), we have $a_1=[b_1^{-1},d_1^{-1}]=[b_1^{-1},[c_1^{-1},b_2]^{-1}]=[b_1^{-1},[b_2,c_1^{-1}]]$.  
Since $[\alpha_2,\alpha_4]=1$ in (\ref{pi_1(Sigma_2' complement in Z''(m))}), we conclude that $[b_1,b_2]=1$.  Since we also have $[b_1,c_1]=1$ from (\ref{Luttinger relations}), we easily deduce that $a_1=1$. From $a_1=1$ and first eight relations in \eqref{Luttinger relations}, we have $a_i$, $b_i$, $c_i$, $d_i$ are all trivial when $i=1$, $2$. We could also kill all $c_i$ for $i=3, \cdots, n$. Consequently, all the generators $\alpha_i$ would be all killed. Especially, $\alpha_3=[\alpha_1^{-1},\alpha_4^{-1}]=[a_1^{-1},b_2^{-1}]=1$. Finally, since $[\alpha_3,\alpha_4]$  is trivial and hence the generators $g_1,\cdots, g_s$ die as well. In conclusion, the fundamental group is a free group generated by $d_3, \cdots, d_n$.

In general, we apply bridge moves as we did in the proof of Theorem \ref{stratGompf}. Let $G = \langle x_{1}, \ \cdots, \ x_{k} \ | \ l_{1}, \ \cdots, \ l_{m} \rangle$ whose $\mathbb Q$-abelianization equals $AG\otimes \mathbb Q$. We represent $x_1, \cdots, x_k$ by $d_3, \cdots, d_{k+2}$. For any relation $l_i$ in a given presentation of $G$, we find a immersed loop $\gamma_i$ generated by $d_i$ with $i=3, \cdots, k+2$. After performing bridge moves, the self-intersection points of $\gamma_i$ are resolve with the cost of increasing genus $k+2$ to $k'$. Similarly, we order the bridges corresponding to the order of $\gamma_i'$. In addition to the $2(k+2)+4$ Luttinger surgeries in the beginning of \ref{sec:c}, we perform Luttinger surgeries to generators $c_g$, $d_g$ of added bridges:

$$(c_g' \times b_1', c_g', -1), \ \ (d_g' \times b_2', d_g', (-1)^{c(g)-1}), \ \  k+3\le g \le k',$$
and to the embedded loops $\gamma_i'$:

$$(\gamma_1' \times b_2', \gamma_1', -1), \cdots,  (\gamma_m' \times b_2', \gamma_m', -1).$$

We call this manifold $Y^+(G)$. Similarly, we apply symplectic sum of $Y^+(G)$ along the genus two surface $\Sigma_2$ descending from $\Sigma_2\times pt$ with $Z''(1,1)$, and denote the resulting symplectic manifold by $X^+(G)$.




Exactly the same argument as in Theorem \ref{stratGompf}, the abelianization of $\pi_1(X(G)$ is $AG\otimes \mathbb Q$ if the presentation $G= \langle x_{1}, \ \cdots, \ x_{k} \ | \ l_{1}, \ \cdots, \ l_{m} \rangle$ is dual finite torsion.
Moreover, by Usher's theorem \cite{U1}, $X^+(G)$ is minimal.

Our manifold $X^+(G)$ has  $e(X^+(G))=4k'+1$ and $\sigma(X^+(G))=-1$, thus $c_1^2=8k'-1$ and $\chi_h=k'$. However, the number $k'$ is determined by the given presentation of $G$. It would be interesting to know smaller examples and $c_1^2>0$.

The following remark was pointed out to us by the referee.

\begin{remark} Notice that each symplectic $4$-manifold constructed in the previous section contains a $\pi_{1}$-trivial symplectic torus with self-intersection zero. This torus descends to our manifold from the surface $pt \times \mathbb{T}^2$ of $F' \times \mathbb{T}^{2}$. Thus, one should be able to realize any $c_{1}^{2} \geq 0$ for each corresponding group, with additional control on the geography, by Theorem 6.2 in \cite{Gom}, combined with its subsequent Remark 2, and Usher's Theorem on symplectic minimality in \cite{U1}. 
\end{remark}


\subsection{Small symplectic $4$-manifolds with $1 \leq c_1^2 \leq 7$}

The construction in the previous section provides examples with $c_1^2>0$ but with large $b^+$. Our goal in this section is to provide examples with minimal $b^+$ for some abelian groups. Our examples with $c_1^2=1, 2$ and $b^+=1$ are interesting to compare with Theorem \ref{reid}, which implies that the algebraic fundamental group of minimal algebraic surfaces with $c_1^2=1$ (resp. $c_1^2=2$) and $b^+=1$ could only be cyclic groups of order at most five (resp. groups of order at most nine). 

\subsubsection{Small symplectic $4$-manifolds with $c_1^2=1$}
\

To construct our first example, we use the genus $2$ symplectic surfaces of self-intersection $0$ in $(\mathbb{T}^2 \times \mathbb{S}^2)\#4\,\CPb$ and $(\mathbb{T}^2 \times \mathbb{S}^2)\#3\,\CPb$.

First, we consider $\mathbb{T}^2 \times \mathbb{S}^2$ and take the union of three symplectic surfaces $(\mathbb T^2\times  \{s_1\} )\cup (\{t\}\times \mathbb S^2)\cup (\mathbb T^2\times \{s_2\})$ in it. Resolve the two double points symplectically, we have a genus two symplectic surface in $\mathbb{T}^2 \times \mathbb{S}^2$ with self-intersection four.  Blow up four times, we obtain a symplectic genus two surface $\Sigma_2$ of self-intersection zero in $X_1=(\mathbb{T}^2 \times \mathbb{S}^2)\#4\,\CPb$.


Next, we consider $\mathbb{T}^2 \times \mathbb{S}^2$ with a standard product symplectic form, and let $T_1 = \mathbb{T}^2 \times pt$, $T_2 = 2(\mathbb{T}^2 \times pt)$, and $S = pt \times \mathbb{S}^2$ be the symplectic configuration of curves show in Figure~\ref{fig:bc}. 

\begin{figure}[H]
\begin{center}
\includegraphics[scale=.53]{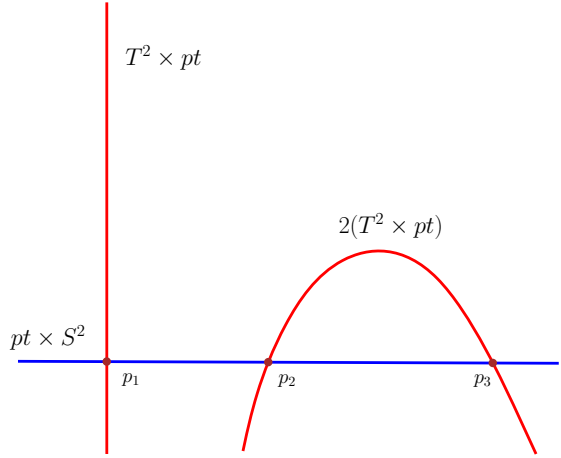}
\caption{Configuration of curves in $\mathbb{T}^2 \times \mathbb{S}^2$}
\label{fig:bc}
\end{center}
\end{figure}

Symplectically resolve the intersection points $p_1$, and $p_2$, and symplectically blow up $p_3$ to obtain a genus two symplectic surface of self-intersection two in $(\mathbb{T}^2 \times \mathbb{S}^2) \#\,\CPb$. Next, we symplectically blow up this surface twice to obtain a genus $2$ symplectic surface $\Sigma_2'$ of self-intersection $0$ in $X_2=\mathbb{T}^2 \times \mathbb{S}^2 \#3\,\CPb$. Let us denote the standard generators of $\pi_{1}(\Sigma_2')$  under the inclusion by $c_{2}$, $d_{2}$, $a_{2}$, $b_{2}$. The inclusion induced homomorphism from $\pi_1(\Sigma'_2)$ into $\pi_1(X_2)=\langle x,y| [x, y]=1\rangle$ as:
$$c_2\mapsto x, \ \  d_2\mapsto y, \ \ a_2\mapsto x^{-2}, \ \ b_2\mapsto y^{-1}.$$

Similarly, let us denote the standard generators of $\pi_1(\Sigma_2)$ in $X_1$ by $a_1$ and $b_1$, and $c_1$ and $d_1$ respectively. The inclusion induced homomorphism from $\pi_1(\Sigma'_2)$ into $\pi_1(X_2)=\langle x',y'| [x', y']=1\rangle$ as:
$$a_1\mapsto x', \ \  b_1\mapsto y', \ \ c_1\mapsto x'^{-1}, \ \ d_1\mapsto y'^{-1}.$$ 
 
The normal circles $\mu$ and $\mu'$ of $\Sigma_2$ and $\Sigma_2'$, in $X_1 \setminus \Sigma_2$ and $X_2 \setminus \Sigma_2'$, can be deformed using the exceptional spheres coming from the blow ups. Hence the fundamental group of the complements of $\Sigma_2$ in $X_1$ and $\Sigma_2'$ in $X_2$ are both isomorphic to $\mathbb{Z}^2$, generated by $a_1$ and $b_1$, and $c_1$ and $d_1$ respectively.

We take the symplectic fiber sum of $X_1 = \mathbb{T}^2 \times \mathbb{S}^2 \#4\,\CPb$ and $X_2  = \mathbb{T}^2 \times \mathbb{S}^2 \#3\,\CPb$ along the surfaces $\Sigma_2$ and $\Sigma_2'$, via a diffeomorphism that extends the orientation preserving diffeomorphism from $\Sigma_2'$ (with $a_2, b_2, c_2, d_2$ generates $H_1(\Sigma_2')$) to $\Sigma_2$ (with $a_1, b_1, c_1, d_1$ generates $H_1(\Sigma_2)$) described by:
$$a_1\mapsto a_2^ab_2, \ \ b_1 \mapsto a_2^{ad-1}b_2^d, \ \ c_1\mapsto c_2^{2a+p}d_2, \ \ d_1\mapsto c_2^{2ad-2}d_2^{d-q}.$$
Here we assume $\gcd(p, 2q)=1$ and the pair $(a, d)$ solves $dp-2aq=pq-1$. One can check that all the intersection pairings are preserved, hence there is an orientation preserving diffeomorphism from $\Sigma_2'$ to $\Sigma_2$.

By Seifert-Van Kampen theorem, the fundamental group of the resulting manifold $X'$ can be seen to be generated by $x$, $y$, $x'$ and $y'$, which all commute with each other and satisfying $xx' = 1, yy' = 1$, with the relations $$x^{-2a}y^{-1}x^{2a+p}y=1, \quad x^{2-2ad}y^{-d}x^{2ad-2}y^{d-q}=1.$$ The latter two relations are equivalent to $x^p=1, y^q=1$.  Thus $\pi_1(X)$ is isomorphic to $\mathbb Z_{|p|} \times  \mathbb Z_{|q|}$. It is easy to check that $e(X) = 11$ and $\sigma(X) = -7$. Usher's Theorem guarantees that $X$ is minimal. Notice $pq\ne 0$, hence all the manifolds here have $b^+=1$. 

Hence, we have 

\begin{theorem}\label{K2=1}
For any $p, q\ge 1$ with $\gcd(p, 2q)=1$, there exists minimal symplectic $4$-manifold $X_{p, q}$ such that  
$$\pi_1(X_{p,q})=\mathbb Z_p\times \mathbb Z_q, \quad b^+(X_{p,q})=1, \quad c_1^2(X_{p,q})=1.$$
\end{theorem}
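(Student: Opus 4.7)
The plan is to construct $X_{p,q}$ as a symplectic fiber sum of two explicit blow-ups of $\mathbb{T}^2\times\mathbb{S}^2$ along genus two symplectic surfaces of self-intersection zero, with a gluing diffeomorphism tailored to the pair $(p,q)$. For the first block, $X_1=(\mathbb{T}^2\times\mathbb{S}^2)\#4\,\CPb$, I would take two parallel tori $\mathbb{T}^2\times\{s_1\}$, $\mathbb{T}^2\times\{s_2\}$ together with a sphere section $\{t\}\times\mathbb{S}^2$, symplectically resolve the two double points to obtain a genus two symplectic surface of self-intersection four, and then blow up four points on it to produce a symplectic $\Sigma_2$ of self-intersection zero. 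For the second block, $X_2=(\mathbb{T}^2\times\mathbb{S}^2)\#3\,\CPb$, I would start from the configuration of Figure~\ref{fig:bc}, resolve the two intersection points symplectically, blow up at the remaining node to get a genus two surface of self-intersection two, and then blow up twice more, yielding $\Sigma_2'$ of self-intersection zero. Since each exceptional sphere meets its surface transversely in a single point, the meridian of $\Sigma_2$ (resp.\ $\Sigma_2'$) is null-homotopic in the complement, so $\pi_1(X_1\setminus\nu\Sigma_2)\cong\pi_1(X_2\setminus\nu\Sigma_2')\cong\mathbb{Z}^2$.

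The heart of the argument is the choice of the gluing diffeomorphism $\phi\colon\Sigma_2'\to\Sigma_2$ prescribed on the standard $H_1$-basis by $a_1\mapsto a_2^a b_2$, $b_1\mapsto a_2^{ad-1}b_2^d$, $c_1\mapsto c_2^{2a+p}d_2$, $d_1\mapsto c_2^{2ad-2}d_2^{d-q}$. Under the hypothesis $\gcd(p,2q)=1$ the linear equation $dp-2aq=pq-1$ has an integer solution $(a,d)$; with such a choice one verifies directly that the prescribed $4\times 4$ integer matrix preserves the symplectic intersection form on $H_1(\Sigma_2)$, so it is realized by an orientation-preserving self-diffeomorphism of the genus two surface. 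Applying Seifert--van Kampen to the resulting fiber sum yields a presentation for $\pi_1(X_{p,q})$ on generators $x,y,x',y'$ of the two ambient $\mathbb{Z}^2$ factors (all commuting pairwise) together with the identifications imposed by pushing $a_1,b_1,c_1,d_1$ into $\pi_1(X_1)$ and $\phi(a_2),\phi(b_2),\phi(c_2),\phi(d_2)$ into $\pi_1(X_2)$. Exploiting commutativity, these identifications collapse precisely to $x^p=1$ and $y^q=1$, whence $\pi_1(X_{p,q})\cong\mathbb{Z}_p\times\mathbb{Z}_q$.

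The numerical invariants follow mechanically: $(e,\sigma)(X_1)=(4,-4)$ and $(e,\sigma)(X_2)=(3,-3)$, so the genus two symplectic sum formulae $e(X_1\#_\psi X_2)=e(X_1)+e(X_2)+4g-4$ and $\sigma(X_1\#_\psi X_2)=\sigma(X_1)+\sigma(X_2)$ give $e(X_{p,q})=11$ and $\sigma(X_{p,q})=-7$, hence $c_1^2=2e+3\sigma=1$; since $\pi_1(X_{p,q})$ is finite we have $b_1=0$, so $b_2=e-2=9$ and $b^+=(b_2+\sigma)/2=1$. Minimality follows from Usher's theorem: each exceptional sphere introduced was used to lower the self-intersection of $\Sigma_2$ or $\Sigma_2'$ to zero, so every such sphere meets the summing surface non-trivially and cannot survive as a disjoint $-1$ sphere in $X_{p,q}$. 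The main obstacle in executing this plan is the middle paragraph: verifying that the prescribed matrix is symplectic (this is precisely where $\gcd(p,2q)=1$ enters, through the solvability of $dp-2aq=pq-1$), and then tracking the Seifert--van Kampen bookkeeping carefully enough to confirm that no spurious torsion creeps into $\pi_1$ beyond $x^p=y^q=1$.
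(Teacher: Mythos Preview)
Your proposal is correct and follows essentially the same route as the paper: the same two building blocks $X_1=(\mathbb{T}^2\times\mathbb{S}^2)\#4\,\CPb$ and $X_2=(\mathbb{T}^2\times\mathbb{S}^2)\#3\,\CPb$, the same genus two surfaces, the same gluing matrix with $(a,d)$ solving $dp-2aq=pq-1$ under $\gcd(p,2q)=1$, the same Seifert--van Kampen reduction to $x^p=y^q=1$, and the same appeal to Usher for minimality. Your invariant computations and your identification of where the hypothesis $\gcd(p,2q)=1$ enters match the paper exactly.
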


Especially, all the cyclic groups could realized by letting $p=1$.

\smallskip

The following remark has been mostly communicated to us by the referee. 

\begin{remark} The construction above also works with $X_2$ replaced by $\mathbb{T}^2 \times \mathbb{S}^2 \#2\,\CPb$, where the genus two surface is obtained using the tori $T_{1} = \mathbb{T}^2 \times pt$, $T_3 = 3(\mathbb{T}^2 \times pt)$, and sphere $S = pt \times \mathbb{S}^2$ in $\mathbb{T}^2 \times \mathbb{S}^2$. We symplectically blow up two intersection points $p_3$, $p_4$ of $T_3$ and $S$, and symplectically resolve the remaining two intersection points $p_1$ of $T_{1}$ and $S$, and $p_2$ of $T_{2}$ and $S$ to obtain a genus two symplectic surface of self-intersection zero in $\mathbb{T}^2 \times \mathbb{S}^2 \#2\,\CPb$. Using this symplectic building block in our construction above, we can obtain the minimal symplectic $4$-manifolds $X_{p,q}$ with the fundamental group isomorphic to $\mathbb Z_{|p|} \times  \mathbb Z_{|q|}$ when $\gcd(p, 2q)=1$, and $e(X_{p,q}) = 10$ and $\sigma(X_{p,q}) = -6$. By replacing $X_1$ by $\mathbb{T}^2 \times \mathbb{S}^2 \#2\,\CPb$ and using the genus two surface of self-intersection $+2$ obtained using two disjoint tori $\mathbb{T}^2 \times pt$ and sphere $S = pt \times \mathbb{S}^2$ in $\mathbb{T}^2 \times \mathbb{S}^2$, we can similarly produce the minimal symplectic $4$-manifolds $Y_{p,q}$ with the fundamental group isomorphic to $\mathbb Z_{|p|} \times  \mathbb Z_{|q|}$ when $\gcd(p, 2q)=1$, and $e(Y_{p,q}) = 9$ and $\sigma(Y_{p,q}) = -5$. In this case, our second building block is $X_2$, and the symplectic genus two surface with self-intersection $-2$ obtained using the tori $T_{1} = \mathbb{T}^2 \times pt$, $T_4 = 4(\mathbb{T}^2 \times pt)$ and sphere $S = pt \times \mathbb{S}^2$ in $\mathbb{T}^2 \times \mathbb{S}^2$. We symplectically blow up three intersection points $p_3$, $p_4$ of $T_3$ and $S$, and symplectically resolve the remaining two intersection points $p_1$ and $p_2$ to obtain a genus two symplectic surface of self-intersection zero in $\mathbb{T}^2 \times \mathbb{S}^2 \#2\,\CPb$.
\end{remark}

\subsubsection{Small symplectic $4$-manifolds with $2\le c_1^2\le 5$}
\

Now we outline the construction of symplectic $4$-manifolds with $c_1^2=2, 3$ and $b^+=1, 2$. To construct such examples, we start with a square zero genus 2 symplectic surface in $\mathbb{T}^4\#2\CPb$. We equip $\mathbb{T}^4=\mathbb{T}^2 \times \mathbb{T}^2$ with a product symplectic form. Next we symplectically resolve the intersection point of the configuration $(\{x\}\times \mathbb T^2)\cup (\mathbb T^2\times \{y\})$ in $\mathbb{T}^4$ to obtain a genus 2 surface of self-intersection 2 in $\mathbb{T}^4$. Symplectically blow up this surface twice to obtain a genus 2 symplectic surface $\hat{\Sigma}_2$ of self-intersection 0 in $\mathbb{T}^4\#2\CPb$. Since $\hat{\Sigma}_2$ has a dual sphere resulting from a blow-up, any meridian of $\hat{\Sigma}_2$  is nullhomotopic in $\mathbb{T}^4\#2\CPb \setminus \hat{\Sigma}_2$. Let $a$, $b$, $c$ and $d$ denote the standard loops that generate $\pi_1(\{x\}\times \mathbb T^2)$ and $\pi_1(\mathbb T^2\times \{y\})$ in $\pi_1(\mathbb T^2\times \mathbb T^2)$ .

Choose  a pair of non-negative integers $p$ and $q$. Let $Y(1/p,1/q)$ denote the symplectic $4$-manifold gotten by performing the following two Luttinger surgeries on $\mathbb{T}^4\#2\CPb$:
\begin{equation}\label{eq:  Y(1/q,1/r)}
(a' \times c', a', -1/p), \ \ 
(b' \times c'', b', -1/q).
\end{equation} 

We take the symplectic fiber sum of $Y(1/p,1/q)$ with the manifolds $Y_{1} = \mathbb{T}^2 \times \mathbb{S}^2 \#4\,\CPb$ (and with $Y_{2}  = \mathbb{T}^2 \times \mathbb{S}^2 \#3\,\CPb$) along the surfaces $\hat{\Sigma}_2$ and $\Sigma_2$ (and $\Sigma_2'$), determined by a map that sends the circles $a$, $b$, $c$, $d$ to $a_1$, $b_1$, $a_{1}^{-1}$, $b_{1}^{-1}$ in the same order. By identifying the meridians $[d ,b ^{-1}]$ and $[a^{-1},d]$ of tori $a' \times c'$ and 
$b' \times c''$ on dual tori (see appendix in \cite{ABP} for the details on this) and applying Seifert-Van Kampen's theorem, the fundamental group of the resulting manifold $Y'$ can be seen to be generated by $a$, $b$, $c$ and $d$, which all commute with each other and the following relations hold: 

\begin{gather*}
[d ,b ^{-1}]=(dad^{-1})^{p},\ \ [a^{-1},d]={b}^{q},\\
ac = 1,\ \ bd = 1,\\
\end{gather*}

Thus $\pi_1(Y')$ is isomorphic to $\mathbb{Z}_p \times \mathbb{Z}_q$. We have $e(Y') = 10$ and $\sigma(X') = -6$. Hence $c_{1}^2 = 2$ and $b^+=1$. If we only perform one Luttinger surgery, then we acquire symplectic manifolds with $b^+=2$, $\pi_1=\mathbb Z\times \mathbb Z_p$ and $c_{1}^2 = 2$, $\chi_{h} = 1$. 

Replacing, $Y_{1}$ with $Y_{2}$ in our construction above, yields symplectic manifolds with $c_{1}^2 = 3$, $\chi_{h} = 1$ and the fundamental groups $\mathbb{Z}_p \times \mathbb{Z}_q$ and $\pi_1=\mathbb Z\times \mathbb Z_p$. 

\begin{remark}Using the building blocks in \cite{AP1, AP}, one can similarly construct symplectic $4$-manifolds with fundamental groups $\mathbb{Z}_p \times \mathbb{Z}_q$ and $2 \leq c_1^2 \leq 5$. The idea is to change the coefficients of the Luttinger surgeries to $-1/p$ and $-1/q$ as in \eqref{eq:  Y(1/q,1/r)}. We leave the details to the interested reader as an exercise (also see the articles \cite{AP, T2} for such examples).  
\end{remark}

To summarize, we have

\begin{theorem}\label{2-7}
For any integer $2\le h\le 3$, there exist minimal symplectic $4$-manifolds $X_{p, q}$ and $X_p$ with $p, q\ge 1$ such that
\begin{enumerate}
\item $\pi_1(X_{p,q})=\mathbb Z_p\times \mathbb Z_q$, $b^+(X_{p,q})=1$ and $c_1^2(X_{p,q})=h$; 
\item $\pi_1(X_{p})=\mathbb Z\times \mathbb Z_p$, $b^+(X_p)=2$ and $c_1^2(X_p)=h$.
\end{enumerate}
\end{theorem}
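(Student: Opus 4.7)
The plan is to package the construction sketched in the paragraph immediately preceding the statement into a uniform argument that handles both cases $h=2,3$ and both fundamental groups $\mathbb{Z}_p\times\mathbb{Z}_q$ and $\mathbb{Z}\times\mathbb{Z}_p$. The building block on the ``torus side'' is the manifold $Y(1/p,1/q)$ obtained from $\mathbb{T}^4\#2\,\CPb$ by the two Luttinger surgeries in \eqref{eq:  Y(1/q,1/r)}, which contains a square-zero genus-$2$ symplectic surface $\hat\Sigma_2$ with $\pi_1$-trivial complementary meridian (thanks to a blow-up dual sphere). The building blocks on the ``sphere side'' are the manifolds $Y_1=\mathbb{T}^2\times\mathbb{S}^2\#4\,\CPb$ and $Y_2=\mathbb{T}^2\times\mathbb{S}^2\#3\,\CPb$, each containing a square-zero genus-$2$ symplectic surface ($\Sigma_2$ and $\Sigma_2'$ respectively) constructed as in the preceding section. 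The identification of these blocks by an orientation reversing bundle isomorphism along the paired genus-$2$ surfaces yields Gompf's symplectic fiber sum.

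First I would handle part (1) with $h=2$ by performing the symplectic fiber sum $Y(1/p,1/q)\#_{\hat\Sigma_2=\Sigma_2}Y_1$ along the map sending the standard loops $a,b,c,d$ on $\hat\Sigma_2$ to $a_1,b_1,a_1^{-1},b_1^{-1}$ on $\Sigma_2$. Applying Seifert--Van Kampen and using that the meridians of the glued surfaces bound punctured exceptional spheres (so both meridians are trivial in the complements), the resulting group is generated by $a,b,c,d$, all commuting, with $ac=bd=1$ together with the two Luttinger commutator relations $[d,b^{-1}]=(dad^{-1})^p$ and $[a^{-1},d]=b^q$ read off via the meridian identifications on dual tori (as in the appendix of \cite{ABP}). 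Since everything commutes, these reduce to $a^p=b^q=1$, giving $\pi_1\cong\mathbb{Z}_p\times\mathbb{Z}_q$. The Euler characteristic and signature are additive under fiber sum along a genus-$2$ surface (with the $4g-4$ correction), yielding $e=10$, $\sigma=-6$, hence $c_1^2=2e+3\sigma=2$ and $b^+=1$. For $h=3$ I would simply replace $Y_1$ by $Y_2$; the fundamental group computation is identical, while $e$ drops by one and $\sigma$ rises by one, giving $c_1^2=3$ and still $b^+=1$.

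Next I would address part (2) by performing only the single Luttinger surgery $(a'\times c',a',-1/p)$ in place of the pair \eqref{eq:  Y(1/q,1/r)}, obtaining a variant $Y(1/p)$ of the torus-side block with $b_1=3$, and then fiber-summing with $Y_1$ (resp. $Y_2$) along the same genus-$2$ surfaces. The Van Kampen calculation now leaves the single relation $a^p=1$ among the four commuting generators, with $b$ surviving as an infinite-order generator while $c,d$ are killed by $ac=bd=1$ in exactly one way, producing $\pi_1\cong\mathbb{Z}\times\mathbb{Z}_p$. Computing $e$ and $\sigma$ in the same way shows $c_1^2=h$ and $b^+=2$.

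Finally, minimality in all four cases follows from Usher's theorem \cite{U1}: the fiber sums are performed along genus-$2$ surfaces of positive genus, and the exceptional divisors in the blown-up sphere-side pieces are absorbed into the genus-$2$ surface before summing, so neither side contributes a $(-1)$-sphere disjoint from the surface along which we sum. I expect the main technical obstacle to be bookkeeping in the Van Kampen step, specifically verifying that the meridians of the Lagrangian tori used in the Luttinger surgery are expressible as the claimed commutators on the dual tori once the sphere-side meridian has been killed; this is the step where one must be careful with the base-point choices and the conventions of \cite{ABP}, but it is a now-standard computation in this line of constructions.
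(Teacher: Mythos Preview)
Your proposal is correct and follows essentially the same approach as the paper: it uses the Luttinger-surgered block $Y(1/p,1/q)$ (or its one-surgery variant) from $\mathbb{T}^4\#2\,\CPb$, fiber-sums it along the genus-$2$ surface with $Y_1$ or $Y_2$ via the gluing $a,b,c,d\mapsto a_1,b_1,a_1^{-1},b_1^{-1}$, computes $\pi_1$ via Seifert--Van Kampen using the dual-torus meridian identifications from \cite{ABP}, reads off $e,\sigma,c_1^2,b^+$, and appeals to Usher for minimality. The only cosmetic slip is in your minimality sentence---the exceptional spheres are not ``absorbed into'' the genus-$2$ surface but rather meet it transversally, which is precisely the hypothesis Usher's theorem needs---but the conclusion and citation are correct.
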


\section{Cyclic fundamental groups via rational blowdown}

In this section we present a new technique for constructing symplectic $4$-manifolds with cyclic fundamental groups. The following lemma will be a key technical ingredient in our construction, which we call ``the summing spheres''. 

\begin{prop}\label{sewing} Let $X$ be a symplectic $4$-manifold, $T$ is a symplectic torus in $X$ with self-intersection zero, and $\pi_1(X) = 1$. Assume that the spheres $S_1$, $S_2$, ... , $S_p$ ($p>0$) are symplectic submanifolds of $X$ with self-intersections $n_1$, $n_2$, $\cdots$, $n_p$ (where $n_{i} \leq 0$), each $S_i$ has a single transversal intersection with $T$ at distinct points, and all intersections are positive. Then, given the above data, there exists a symplectic $4$-manifold $Y$ with $c_1^2 (Y) = c_1^2 (X)$, $\chi_{h} (Y) = \chi_{h} (X)$, $\pi_1 (Y) = \mathbb{Z}_{p}$ such that $Y$ contains a symplectic sphere of self-intersection $n_{1} + \cdots + n_{p}$.   
\end{prop}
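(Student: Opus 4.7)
The plan is to construct $Y$ from $X$ using the standard Fintushel--Stern symplectic rational blowdown applied to a configuration built from $T \cup S_1 \cup \cdots \cup S_p$, combined with a sequence of symplectic blow-ups. The overall strategy is: (i) blow up to produce a linear chain of symplectic spheres whose neighborhood bounds a lens space filling a rational ball with $\pi_1 = \mathbb{Z}_p$; (ii) rationally blow the chain down; (iii) identify the summed sphere among the surviving classes; (iv) verify the topological invariants by the additivity formulas for blow-ups and rational blowdowns.

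First, I would modify the configuration $T \cup \bigcup S_i$ by performing a total of $p$ symplectic blow-ups — one at each intersection $T \cap S_i$, and possibly additional blow-ups at generic points on the $S_i$'s, using the hypothesis $n_i \leq 0$ to lower their self-intersections. The proper transforms $T'$, $S_i'$ and the exceptional divisors $E_i$ are to be arranged so that, after symplectically resolving appropriate nodes using Gompf's symplectic smoothing of transverse intersections, one produces a linear plumbing $C_p = [-(p+2), -2, \ldots, -2]$ of $p$ symplectic spheres sitting inside the blown-up $X$. This step decreases $c_1^2$ by exactly $p$ and preserves $\chi_h$.

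Second, I would apply Symington's symplectic rational blowdown to the $C_p$ configuration, replacing its plumbing neighborhood (bounded by $L(p^2, p-1)$) by the rational homology ball $B_p$ with $\pi_1(B_p) = \mathbb{Z}_p$. By the standard Fintushel--Stern formula this operation increases $c_1^2$ by $p$ and preserves $\chi_h$, so the net effect of the blow-ups and the blowdown is $c_1^2(Y) = c_1^2(X)$ and $\chi_h(Y) = \chi_h(X)$. A Van Kampen computation, using that $\pi_1(X) = 1$ and that the spheres $S_i$ and the exceptional divisors trivialize everything in the complement of the $C_p$ neighborhood, will show that $\pi_1(Y) = \pi_1(B_p) = \mathbb{Z}_p$.

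Third, I would track the images of the classes $[S_1], \ldots, [S_p]$ through both the blow-ups and the rational blowdown. The class $[S_1] + \cdots + [S_p]$ survives as a single symplectic sphere $\Sigma \subset Y$ obtained by concatenating the pieces of each $S_i$ lying outside the $C_p$-neighborhood and gluing them across the rational ball. Using the intersection-form computation for $B_p$ (a rational ball, so its intersection form is trivial over $\mathbb{Q}$), the self-intersection of $\Sigma$ is computed to be $\sum_i n_i$, matching the required value.

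The main obstacle will be step one: explicitly constructing the $C_p$ chain inside the blown-up configuration. Our starting plumbing is star-shaped with the torus $T'$ of genus $1$ at the center, whereas $C_p$ is a linear chain of spheres. Converting one into the other requires combining blow-ups with symplectic smoothings of selected nodes, and this is where the hypothesis $n_i \leq 0$ enters crucially: it guarantees enough negativity on each $S_i$ to absorb the blow-ups needed to drop its self-intersection to $-2$ (and similarly for the ``anchor'' sphere which needs self-intersection $-(p+2)$). The second technical subtlety is verifying that the resolution of the torus-at-center into a chain can be done symplectically while keeping the classes $[S_i]$ represented by disjoint symplectic spheres outside the $C_p$-neighborhood, so that the summed sphere $\Sigma$ can be read off at the end.
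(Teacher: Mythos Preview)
Your approach is quite different from the paper's and, as written, does not work. The paper does not use rational blowdown for this proposition at all; it constructs $Y$ as the symplectic fiber sum of $X$ with $\mathbb{T}^2\times\mathbb{S}^2$, gluing the torus $T\subset X$ to a connected \emph{braided} torus $T_p$ in the class $p[\mathbb{T}^2\times\{pt\}]$. Since $e$ and $\sigma$ are additive under a genus-one fiber sum and both vanish on $\mathbb{T}^2\times\mathbb{S}^2$, the equalities $c_1^2(Y)=c_1^2(X)$ and $\chi_h(Y)=\chi_h(X)$ are immediate. The Van Kampen computation is one line: the two generators of $\pi_1(T)$ are null in $X$, and under the gluing they are sent to $b^p$ and $c$ in $\pi_1(\mathbb{T}^2\times\mathbb{S}^2)=\langle b,c\rangle$, so $\pi_1(Y)=\mathbb{Z}_p$. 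Finally, the sphere $\{pt\}\times\mathbb{S}^2$ meets $T_p$ transversely in $p$ points; after the sum its $p$ discs cap off the $p$ punctured spheres $S_i\setminus\nu T$, producing a single symplectic sphere of square $\sum n_i$.

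The fatal gap in your plan is exactly the step you flag as ``the main obstacle,'' and it is not surmountable within the framework you propose. After blowing up the $p$ points $T\cap S_i$, the proper transform $T'$ is still a \emph{torus} (now of square $-p$) sitting at the hub of a star. A $C_p$ chain consists entirely of rational curves; but blow-ups do not change the genus of a smooth surface, and Gompf's symplectic smoothing of a transverse node between surfaces of genera $g_1,g_2$ yields a surface of genus $g_1+g_2$, so genus can only go \emph{up}. There is no mechanism in your toolkit that converts the genus-one hub into part of a sphere chain. Two smaller issues: your numerics are off by one (rational blowdown of the Fintushel--Stern $C_p$, which has $p-1$ vertices, raises $c_1^2$ by $p-1$, not $p$, so even granting step one you would get $c_1^2(Y)=c_1^2(X)-1$); and the hypothesis $n_i\le 0$ does not help you reach self-intersection exactly $-2$ by blowing up when some $n_i$ is already below $-2$.
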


\begin{proof} The manifold $Y$ will be constructed from $X$ by applying Gompf's symplectic sum operation along $T$. Consider $\mathbb{T}^2 \times \mathbb{S}^2$ equipped with a product symplectic form, and let $T_{p}$ denote connected braided symplectic torus representing the homology class $p[\mathbb{T}^2 \times \{ pt \}]$ in $\mathbb{T}^2 \times \mathbb{S}^2$. We form the symplectic sum of $X$ and $\mathbb{T}^2 \times \mathbb{S}^2$ along the tori $T$ and $T_p$, using a gluing map that sends the nullhomotopic circles $a_1$ and $a_2$ of $T$ to the circles $b_1 = b^p$, $b_2 = c$ of $T_p$, in the same order. By Seifert-Van Kampen theorem, the fundamental group of the resulting symplectic manifold $Y$ can be seen to be generated by $b$ and $c$. The following relations hold in $\pi_1(Y)$: $c = 1$ and $b^p = 1$. Thus, $\pi_1(Y)$ is isomorphic to $\mathbb{Z}_p$. The Euler characteristic can be computed as $e(Y) = e(X) + e(\mathbb{T}^2 \times \mathbb{S}^2 ) = e(X)$, and the Novikov additivity gives $\sigma(Y) = \sigma(X) + \sigma(\mathbb{T}^2 \times \mathbb{S}^2 ) = \sigma(X)$. When performing the symplectic sum, we have a freedom to identify $p$ meridional circles $\alpha_i$ of torus $T$ on $S_i$ with $p$ disjoint meridional circles of $T_p$ on the sphere $pt \times \mathbb{S}^2$ (See Figure~\ref{fig:sphere}). Thus, we obtain a closed sphere $S$ with self-intersection $n_{1} + \cdots + n_{p}$. Finally, using the Corollary 1.2 (\cite{Gom}, page 17), we conclude that $S$ is a symplectic submanifold of $Y$.                                                                                                  
\end{proof}

\begin{remark} If we do symplectic sum the other way, e.g. if we sum up one $p$ multiple of the fiber in an elliptic fibration with $\mathbb T^2\times \mathbb S^2$ along fiber, we get a manifold diffeomorphic to the original one. This operation is called smoothly trivial \cite{U1}.
\end{remark}

\begin{figure}
\begin{center}
\includegraphics[scale=.3]{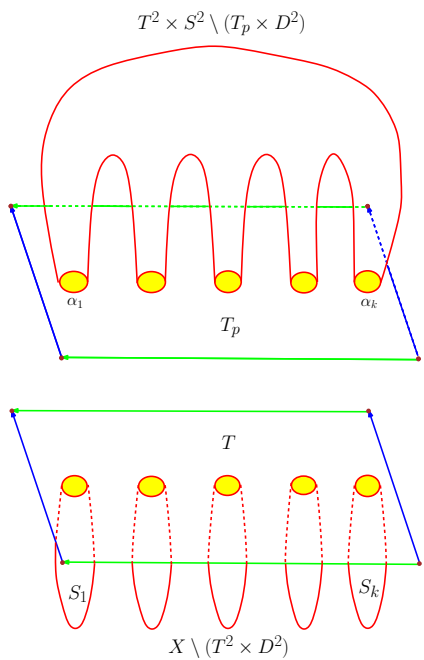}
\caption{Summing Spheres}
\label{fig:sphere}
\end{center}
\end{figure}

\subsection{Elliptic fibrations as words in $\mathbb{SL}(2,\mathbb{Z})$}

Let $f\colon M^4 \to \mathbb{S}^2$ denote a genus $1$ Lefschetz fibration. $M^4$ is $\mathbb{T}^2$ bundle over $\mathbb{S}^2$ except at finitely many critical values of $f$. The fibers over the critical values are called singular fibers of $f$. Furthemore, Lefschetz fibrations are characterized by their global monodromy. This means a factorization of the identity element in the mapping class group $\Gamma_{1}$ of the fiber $\mathbb{T}^2$ as a product of right-handed Dehn twists. The mapping class group $\Gamma_{1} = \mathbb{SL}(2,\mathbb{Z})$ is generated by $t_{a}$, $t_{b}$, and subject to the relations \[ t_{a}t_{b}t_{a}=t_{b}t_{a}t_{b}\qquad {\mbox{ and }} \qquad (t_{a}t_{b})^6=1 \]

\noindent where $t_{a}$ and $t_{b}$ are Dehn twists along the standard generators $a$ and $b$ of the first homology of a torus. 

The classification of the elliptic Lefschetz fibrations is due to Moishezon. He showed that after a possible perturbation an elliptic fibration over $\mathbb{S}^2$ is equivalent to one given by the monodromy $(t_{a}t_{b})^{6n}=1$ in $\Gamma_{1}$. The total space of such fibration is $E(n)$, a simply connected elliptic surface with holomorphic Euler characteristic $\chi_{h} = n$. Furthemore, any elliptic Lefschetz fibration on $E(n)$ admits a sphere section with self--intersection $-n$.

\subsection{Singular fibers and sections in elliptic fibrations}

We briefly discuss the certain singular fibers that occur in elliptic fibrations. We will also discuss the sections of such fibration. 

\medskip

$\bf {Type \ I_{1}}$ : Such a singular fiber is an immersed 2-sphere with one positive double point and referred to as the fishtail fiber. The monodromy of the fishtail fiber is a conjugate of $t_{a}$. 

\medskip

$\bf{Type \ I_{k}}$ : Such a singular fiber is a plumbing of $k$ smooth $2$ spheres of self-intersection $-2$ along a circle and referred to as $I_k$ singularity (or necklace fiber). The monodromy of $I_k$ singularity is a conjugate of $t_{a}^{k}$ ($k\geq 2$). 

\medskip

Persson \cite{Pers} provides a complete classification of the configurations of singular fibers on rational elliptic surfaces. We also refer to \cite{HKK} and \cite{KM} for complete treatment on the topology of elliptic surfaces and their singular fibers. In what follows, we will use an elliptic Lefschetz fibration $E(1) \rightarrow \mathbb{S}^2$ with a singular fiber $F$ of type $I_{5}$, seven singular fibers of type $I_1$. The existence of such fibration is given on page 9 of \cite{Pers}. Here we also provide a proof using a mapping class group argument.

\begin{lemma} 

There exists an elliptic Lefschetz fibration $E(1) \rightarrow \mathbb{S}^2$ with 
a singular fiber $F$ of type $I_{5}$ and seven fishtails.

\end{lemma}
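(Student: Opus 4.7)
The plan is to produce the required elliptic Lefschetz fibration by exhibiting a suitable monodromy factorization in the mapping class group $\Gamma_1 = SL(2,\mathbb{Z})$ of the generic torus fiber. By Moishezon's theorem, any elliptic Lefschetz fibration on $E(1)$ over $\mathbb{S}^2$ arises (after a small perturbation) from a factorization of the identity as a product of $12$ right-handed Dehn twists Hurwitz equivalent to $(t_a t_b)^6 = 1$. The local monodromy around a fishtail fiber is a single Dehn twist $t_c$, while the local monodromy around a singular fiber of type $I_k$ is a conjugate of $t_c^k$; equivalently, an $I_k$ fiber is obtained by coalescing $k$ consecutive Dehn twists along a common vanishing cycle in the factorization. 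It therefore suffices to rewrite the standard factorization of length twelve so that five consecutive factors are Dehn twists along a single simple closed curve, while the remaining seven are Dehn twists along (possibly other) simple closed curves. The arithmetic is consistent: $5+7\cdot 1 = 12 = e(E(1))$.

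Starting from $(t_a t_b)^6 = 1$, I would apply Hurwitz moves together with the braid relation $t_a t_b t_a = t_b t_a t_b$ to accumulate copies of $t_a$ (or of a suitable conjugate $t_c$) adjacent to each other. Concretely, a Hurwitz move at position $i$ replaces the pair $(\phi_i,\phi_{i+1})$ by $(\phi_i\phi_{i+1}\phi_i^{-1},\phi_i)$, preserving the ordered product, the positivity of each factor, and the Lefschetz fibration up to isomorphism. Iterating such moves and applying the braid relation whenever a consecutive subword $t_at_bt_a$ (or its image under an element of $\Gamma_1$) appears, one can successively shift conjugates of $t_a$ to the left, clustering five of them consecutively. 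After this rearrangement the first five factors can be deformed into a single $I_5$ singular fiber, while the last seven give the seven fishtails.

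The main obstacle is the explicit bookkeeping of Hurwitz moves, since $\Gamma_1$ has nontrivial centralizers and several inequivalent ways of clustering twists. Fortunately, for the purpose of this lemma only the existence of such a factorization is needed, and this is guaranteed by Persson's classification of singular fiber configurations on rational elliptic surfaces \cite{Pers}, where the combination $I_5 + 7I_1$ appears in the list (p.~9). Since every genus one Lefschetz fibration over $\mathbb{S}^2$ whose singular fibers contribute total Euler characteristic $12$ has total space diffeomorphic to $E(1)$, this completes the proof. The mapping class group derivation sketched above can then be included as an independent verification.
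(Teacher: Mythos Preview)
Your proposal is correct and follows essentially the same two-pronged approach as the paper: both invoke Persson's classification \cite{Pers} for existence and point to a mapping class group derivation as an independent check. The difference is one of completeness. The paper actually carries out the explicit computation, using the braid relation to transform $(t_at_b)^6=1$ into $t_at_bt_at_bt_a^3t_bt_a^2t_b^2=1$ and then conjugating to obtain
\[
t_a\,t_b\,(t_at_bt_a^{-1})\,t_a^5\,(t_a^{-1}t_bt_a)\,t_b\,(t_b^{-1}t_at_b)\,t_b = 1,
\]
in which the block $t_a^5$ is visibly the $I_5$ monodromy and the remaining seven factors are single Dehn twists. You only describe the strategy (Hurwitz moves plus the braid relation) without producing such a word, and then lean on Persson to close the argument. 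That is logically sufficient, but since the lemma is stated independently and the paper's own proof is self-contained, you may want to supply the explicit factorization rather than leave it as a sketch.
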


\begin{proof} Using the braid relation several times, it is easy to convert the word $(t_{a}t_{b})^{6}=1$ into the following word

\[ t_at_bt_at_bt_{a}^{3}t_{b}t_{a}^{2}t_{b}^{2} = 1 \]
 
Next, by conjugating, we obtain 

\[ t_at_b(t_at_bt_a^{-1})t_a^5(t_a^{-1}t_{b}t_{a})t_b(t_b^{-1}t_{a}t_{b})t_{b} = 1 \]
 
\noindent The last word translates into the singular fiber of type $I_{5}$ and seven fishtail fibers.
\end{proof}

Moreover, in addition to the information of singular fibers, we would like to know how many disjoint sections such fibration admits. One can construct the fibration with sections explicitly by choosing the cubic polynomials $p_0$ and $p_1$ carefully in the standard picture of obtaining elliptic $E(1)$. Next, we provide two constructions with different elliptic fibration structures.

1. We choose $p_0$ as a multiplication of three degree one polynomials, whose zero set $C_0$ corresponds to three lines $l_{1}$, $l_{2}$, and $l_{3}$ of general position in $\mathbb{CP}^2$. Let the three intersection points be $P, Q, R$. Choose $p_1$ such that (i) the zero set $C_{1}$ corresponding to a smooth elliptic curve; (ii) It shares the common zeros with $p_0$ at $P, Q$ and at other five points other than $R$. The family $t_0p_0+t_1p_1$ for $(t_0, t_1)\in \mathbb{CP}^1$ gives the pencil structure on $\CP$. We could choose a generic $p_1$ satisfying (i) and (ii) such that this family tangent to $C_1$ at points $P, Q$ when $t_0\cdot t_1\neq 0$, and there are at most fishtail singularities in the other fibers. See Figure \ref{fig:config}. Then we blow up consecutively twice at $P$ and $Q$, once at the other five points. We finally achieve an elliptic $E(1)$ with $I_5$ fiber and seven $I_1$. The homology classes for the components of $I_5$ are $H-E_1-E_6-E_7$, $H-E_2-E_8-E_9$, $E_2-E_5$, $H-E_1-E_2-E_3$ and $E_1-E_4$. We thus have seven disjoint sections $E_3, \cdots, E_9$.

\begin{figure}
\begin{center}
\includegraphics[scale=.43]{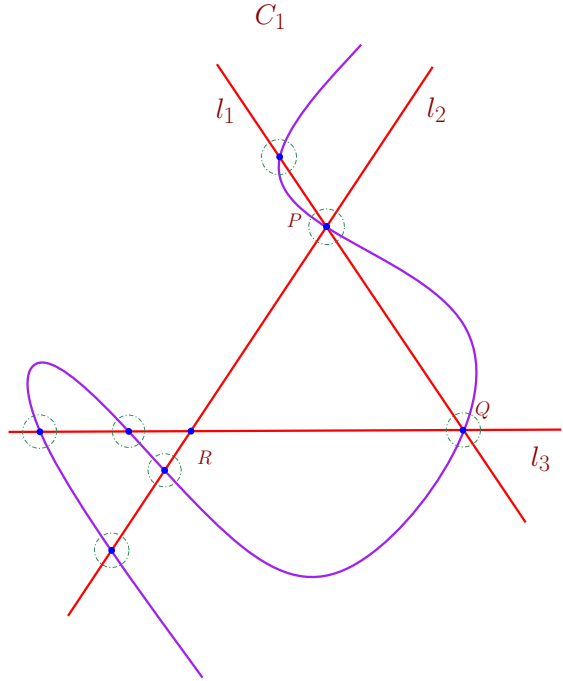}
\caption{Configuration of curves in $\mathbb{CP}^2$}
\label{fig:config}
\end{center}
\end{figure}

2. We choose a cubic $p_0$ such that it's zero set $C_0$ has a unique double point $P$. We also choose another cubic $p_1$ such that it's zero set $C_1$ is smooth and tangent to $C_0$ at the point $P$ with multiplicity five (i.e., this means a $4$-fold tangency to one branch and transverse to the other branch). The other four intersections are in a general position. Notice that such a configuration exists since there is a cubic curve passing through any $6$ given points. Then we choose the $6$ points such that four of them are on one branch near $P$, one point is on the other branch, and the remaining one point is outside of $C_0$. If needed, we perturb the obtained curve such that the other $4$ intersections with $C_0$ are transversal. Now we can find a family of cubics by letting the first $5$ of the $6$ points vary in family and approach to the point $P$. The limiting curve is $C_1$.

 The family $t_0p_0+t_1p_1$ for $(t_0, t_1)\in \mathbb{CP}^1$ gives the pencil structure. We could choose $p_1$ such that this family tangent to $C_1$ at point $P$ (with order five) when $t_0\neq 0$, and there are at most fishtail singularities in the other fibers. See Figure \ref{fig:cubics}.  We do five blow-ups consecutively at $P$ and once at each of the other four. Then we have a $I_5$ fiber with homology classes: $E_1-E_2$, $E_2-E_3$, $E_3-E_4$, $E_4-E_5$ and $3H-2E_1-E_2-E_3-E_4-E_6-E_7-E_8-E_9$. The $-1$ curves $E_5, \cdots, E_9$ are five disjoint sections. Notice that $E_6, \cdots, E_9$ intersect the same component $3H-2E_1-E_2-E_3-E_4-E_6-E_7-E_8-E_9$ and $E_5$ intersects an adjacent component $E_4-E_5$. 

Apparently, the similar construction (by choosing cubic $p_1$ such that the zero set $C_1$ is smooth and tangent to $C_0$ at $P$ with multiplicity four) works to produce $I_4$ fiber with six sections. Five of them intersect the fourth component of $I_4$ and the other intersects the first component. 

\begin{figure}
\begin{center}
\includegraphics[scale=.33]{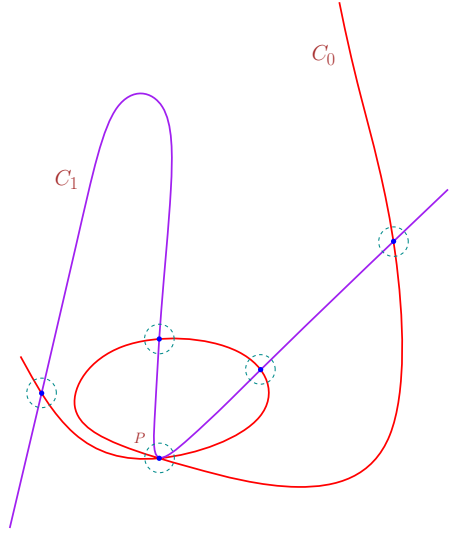}
\caption{Configuration of curves in $\mathbb{CP}^2$}
\label{fig:cubics}
\end{center}
\end{figure}

\subsection{Constructions of symplectic manifolds with $b^+=1$ and $c_1^2>0$}

The final ingredient for our construction is the rational blowdown. Below we recall the definition, and refer the reader to \cite{FS1} for full details

\subsubsection{Rational blowdown}

Let $C_p$ be the smooth $4$-manifold obtained by plumbing $p-1$ disk bundles over the $2$-sphere according to the diagram.

\begin{figure}[h]
\setlength{\unitlength}{0.09in} 
\centering 
\begin{picture}(30,8) 
\put(1, 4){\line(1, 0){11}}
\put(19, 4){\line(1, 0){11}}

\put(0.8, 3.65){\small$\bullet$}
\put(10.8, 3.65){\small$\bullet$}
\put(19.8, 3.65){\small$\bullet$}
\put(29.8, 3.65){\small$\bullet$}

\put(0, 5){$-p-2$}
\put(10, 5){$-2$}
\put(20, 5){$-2$}
\put(29, 5){$-2$}
\put(0, 2.5){$u_0$}
\put(10, 2.5){$u_1$}
\put(20, 2.5){$u_{p-3}$}
\put(29, 2.5){$u_{p-2}$}

\put(13.4, 3.65){$\cdots$}
\put(16, 3.65){$\cdots$}
\end{picture}
\end{figure}

\noindent Here the classes of the spheres have $u_0^2=-(p+2)$ and $u_i^2=-2$, with $i=1, \cdots, p-2$. According to Casson and Harer \cite{CH}, the boundary of $C_p$ is the lens space $L(p^2, p-1)$ which also bounds a rational ball $B_p$ with $\pi_1(B_p) = Z_p$ and $\pi_1(\partial B_p) \rightarrow  \pi_1(B_p)$ surjective. If $C_p$ is embedded in a $4$-manifold $X$ then the rational blowdown manifold $X_p$ is obtained by replacing $C_p$ with $B_p$, i.e., $X_p = (X \setminus C_p) \cup B_p$.  

\begin{lemma}\label{thm:rb} $b_{2}^{+}(X_p) = b_2^{+}(X)$, $\sigma(X_p) = \sigma(X) + (p-1)$, $c_1^{2}(X_p) = c_1^2(X) + (p-1)$, and $\chi_{h}(X_p) = \chi_{h}(X)$.
\end{lemma}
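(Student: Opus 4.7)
The plan is to analyze the two ``cap'' pieces $C_p$ and $B_p$ separately, compute what each contributes to the invariants, and stitch everything back together with two standard additivity principles: inclusion--exclusion for the Euler characteristic and Novikov additivity for the signature. Write $W = X \setminus \mathrm{int}(C_p) = X_p \setminus \mathrm{int}(B_p)$ for the common complement, so that $X = W \cup_L C_p$ and $X_p = W \cup_L B_p$ with $L = L(p^2, p-1)$.

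First I would record the invariants of the two caps. The plumbing $C_p$ deformation retracts onto a linear chain of $p-1$ two-spheres meeting transversely in $p-2$ points, so $e(C_p) = 2(p-1) - (p-2) = p$. Its intersection form is the linear plumbing matrix with diagonal $-(p+2), -2, \ldots, -2$ and $1$'s on the super- and sub-diagonal; this is negative definite by the standard criterion for linear chains with all diagonal entries $\leq -2$, hence $\sigma(C_p) = -(p-1)$ and $b_2^+(C_p) = 0$. The rational ball satisfies $b_2(B_p) = 0$ by definition, so $\sigma(B_p) = 0$ and $e(B_p) = 1$. Finally, $L$ is a rational homology sphere, giving $e(L) = 0$ and $H_\ast(L; \mathbb{Q}) = H_\ast(S^3; \mathbb{Q})$.

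Next I would assemble. Inclusion--exclusion for $e$ across the boundary $L$ gives $e(X) - e(X_p) = e(C_p) - e(B_p) = p-1$, and Novikov additivity of the signature (applicable because the three pieces are smooth manifolds sharing the rational homology sphere $L$) gives $\sigma(X_p) - \sigma(X) = \sigma(B_p) - \sigma(C_p) = p-1$. Plugging these into $c_1^2 = 2e + 3\sigma$ and $\chi_h = (e+\sigma)/4$ yields
\[
c_1^2(X_p) - c_1^2(X) = -2(p-1) + 3(p-1) = p-1, \qquad \chi_h(X_p) - \chi_h(X) = \frac{-(p-1) + (p-1)}{4} = 0,
\]
which settles the last two identities.

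For the equality $b_2^+(X_p) = b_2^+(X)$, I would use Mayer--Vietoris over $\mathbb{Q}$: since $H_1(L; \mathbb{Q}) = H_2(L; \mathbb{Q}) = 0$, the sequence collapses to
\[
H_2(X; \mathbb{Q}) \cong H_2(W; \mathbb{Q}) \oplus H_2(C_p; \mathbb{Q}), \qquad H_2(X_p; \mathbb{Q}) \cong H_2(W; \mathbb{Q}) \oplus H_2(B_p; \mathbb{Q}) = H_2(W; \mathbb{Q}).
\]
These decompositions are orthogonal for the intersection form because classes from $W$ and from the cap can be represented by disjoint cycles, and the pairing on the $H_2(W; \mathbb{Q})$ summand is computed intrinsically in $W$, so it is the same in $X$ as in $X_p$. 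Since the $C_p$-summand is negative definite and the $B_p$-summand is rationally trivial, $b_2^+(X) = b_2^+\bigl(\text{form on }H_2(W; \mathbb{Q})\bigr) = b_2^+(X_p)$. The one step that demands any care is the negative-definiteness of the plumbing form of $C_p$; once that is in hand, everything else is a mechanical consequence of the two additivity principles and a one-line arithmetic.
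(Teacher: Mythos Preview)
Your argument is correct and is essentially the paper's proof carried out in full detail: both rely on the negative-definiteness of $C_p$ to see that $b_2^+$ is unchanged and $b_2^-$ drops by $p-1$, then feed the resulting $\sigma$- and $e$-shifts into $c_1^2 = 2e+3\sigma$ and $\chi_h = (e+\sigma)/4$. The paper simply asserts the $b_2^\pm$ statement and the relation $e(X_p)=e(X)-(p-1)$, whereas you justify them via Mayer--Vietoris over $\mathbb{Q}$ and inclusion--exclusion, and you obtain $\sigma(X_p)-\sigma(X)$ directly from Novikov additivity rather than via $b_2^\pm$; these are equivalent routes to the same endpoint.
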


\begin{proof} 

Since the manifold $C_{p}$ is negative definite, we have $b_{2}^{+}(X_{p}) = b_{2}^{+}(X)$ and $b_{2}^{-}(X_{p}) = b_{2}^{-}(X) - (p-1)$. Thus,  $\sigma(X_p) = \sigma(X) + (p-1)$. Using the formulas $c_1^{2} = 3\sigma +2e$ and $\chi_{h} = (\sigma +e)/4$, we have $c_{1}^{2}(X_{p}) = 3\sigma(X_{p}) + 2e(X_{p}) = 3(\sigma(X)+(p-1)) + 2(e(X)-(p-1)) = c_{1}^{2}(X) + (p-1)$ and $\chi_{h}(X_p) = (\sigma(X)+(p-1) + e(X)-(p-1))/4 = \chi_{h}(X)$.  

\end{proof}

It is known that if $X$ admits a symplectic structure for which the spheres are symplectic and intersect positively, then the rational blowdown $X_{p}$ would also admit a symplectic structure induced from the symplectic structures of $X$ and $B_p$ \cite{Sym}. Hence, all the smooth $4$-manifolds we construct below admit symplectic structures.

\subsubsection{An example with $\pi_1=\mathbb Z_5$ and $c_1^2=2$}
\

According to both constructions above $E(1)$ has elliptic fibration with $I_{5}$ singularity, seven fishtail fibers and five disjoint $-1$ sphere sections. Perform a fiber sum with $\mathbb{T}^2 \times \mathbb{S}^2$ using the braided torus $5(\mathbb{T}^2 \times pt)$. By Proposition \ref{sewing}, these five $-1$ sections get glued together, yielding a symplectic $-5$ sphere. Using this $-5$ sphere along with one $-2$ sphere of $I_5$ which intersects only one section, we get the configuration $C_{3}$. By rationally blowing down $C_3$ configuration, we obtain symplectic manifold with $\pi_1=\mathbb Z_5$ and $c_1^2=2$. For the sake of completeness, we spell out the fundamental group computation: using one of the two spheres of $I_5$ fiber adjacent to the above chosen $-2$ sphere, we deform the meridian of $C_3$. Hence, the rational blowdown along $C_3$ kills the fundamental groups $\pi_1(\partial B_3) = {\mathbb{Z}}_{25}$ and $\pi_{1}(B_3) = \mathbb{Z}_{5}$. Thus, the rational blowdown surgery leaves the fundamental group unchanged.

\subsubsection{An example with $\pi_1=\mathbb Z_4$ and $c_1^2=1$}
\

We consider the same elliptic fibration, but use only four $-1$ sections when fiber summing with  $\mathbb{T}^2 \times \mathbb{S}^2$ along the braided torus $4(\mathbb{T}^2 \times pt)$. We choose these $-1$ sections such that one of $-2$ spheres of $I_5$ singularity intersects only one of these sections. By Proposition \ref{sewing}, these four $-1$ sections get glued together to give a $-4$ sphere in a symplectic $4$-manifold with the fundamental group isomorphic to $\mathbb Z_4$. By rationally blowing down $-4$ sphere, we obtian a symplectic $4$-manifold with $c_1^2=1$. Using the fact that the above $-4$ sphere has a dual sphere, with self-intersection $-2$ coming from the necklace fiber $I_5$, we conclude that  $\pi_1=\mathbb Z_4$.

Alternatively, we can obtain symplectic $4$-manifold with $\pi_1=\mathbb Z_4$ and $c_1^2=1$ using the same elliptic fibration, but with four disjoint sections. However, we fiber sum it with  $(\mathbb T^2 \times \mathbb S^2) \#\,\CPb$ along the braided torus $4(\mathbb{T}^2 \times pt)$. The four $-1$ sections are glued with a symplectic $-1$ sphere in class $[pt \times \mathbb S^2]-E$. Summing spheres together, we construct a symplectic $-5$ sphere. Using $-5$ sphere along with $-2$ sphere of $I_5$ intersecting one of the four sections, we have a $C_3$ configuration. By rationally blowing down $C_3$ configuration, we obtain symplectic $4$-manifold with desired properties. 

Another way is using $(\mathbb{T}^2 \times \mathbb{S}^2) \#2\,\CPb$ and a $-2$ sphere in class $[pt\times \mathbb S^2]-E_1-E_2$.  We have a $C_4$ configuration this time if we choose the elliptic fibration in our construction 2. 

One could show that all these three (and other similar constructions) give us the same manifold. The following is a generalization of Gompf's result regarding (rational) blowing down the configuration of a $-4$ sphere with a $-1$ sphere. 

\begin{prop}
Suppose we have following two configurations :
\begin{enumerate}
\item A $C_{n+p}$ configuration along with $p$ spheres of square $-1$ each meeting exactly one points with the $-(n+p+2)$ sphere in $C_{n+p}$. Let $X_1$ be the smooth $4$-manifold obtained by rationally blowing down $C_{n+p}$, and $X_2$ be the smooth $4$-manifold obtained by first blowing down all the $-1$ spheres, then rational blowing down the $C_n$ configuration.

\item A $C_n$ configuration along with one more $-1$ sphere intersecting one point with the last $-2$ sphere in $C_n$. Assume that $X_3$ is obtained by rationally blowing down $C_n$, and $X_4$ is obtained by consecutively blowing down the $-1$ spheres along the reverse order of $C_n$ chain.
\end{enumerate}

Then $X_1\cong X_2$ and $X_3\cong X_4$.
\end{prop}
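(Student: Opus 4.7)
The overall strategy is to reduce each identity to a local statement, showing that the two prescribed operations produce diffeomorphic tubular neighborhoods of the relevant sphere configuration; re-gluing to the common complement then yields the global diffeomorphism. I would begin with part (2), whose core lemma feeds into part (1).

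The neighborhood $\nu(C_n\cup E)$ is presented by a Kirby diagram: a linear chain of Hopf-linked unknots with framings $-(n+2),-2,\ldots,-2,-1$, the terminal $-1$ corresponding to $E$. Iteratively blowing down the $-1$-framed unknot raises the framing of its unique linked neighbor by $+1$, producing a new $-1$; after $n-1$ such blow-downs the chain collapses to a single unknot of framing $-(n+1)$. Hence $X_4$ is obtained from $X$ by excising $\nu(C_n\cup E)$ and regluing $\nu(S^2_{-(n+1)})$. On the other side, $X_3$ replaces $\nu(C_n)$ by the rational ball $B_n$, while the portion of $E$ lying outside $\nu(C_n)$ is a $2$-handle $D$ attached to $\partial B_n=L(n^2,n-1)$ along a meridian $\mu$ of $u_{n-2}$. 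The core claim is therefore
\[ B_n\cup_\mu D \;\cong\; \nu(S^2_{-(n+1)}), \]
which I would prove by Kirby calculus in the standard handle presentation of $B_n$ (one $0$-handle, one $1$-handle, one $2$-handle wrapping the dotted circle $n$ times, so that $\pi_1(B_n)=\mathbb{Z}_n$). Since $\mu$ projects to a generator of $\pi_1(B_n)=\mathbb{Z}_n$ under the inclusion $\partial B_n\hookrightarrow B_n$, the attaching curve of $D$ algebraically links the dotted circle once; a handle slide then makes $D$ and the $1$-handle a cancelling pair, leaving a single $2$-handle whose framing, computed via the continued fraction $[n+2,2,\ldots,2]=n^2/(n-1)$ together with the blow-down framing changes, is $-(n+1)$.

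Part (1) reduces to an iterated version of this cancellation. After the rational blow-down of $C_{n+p}$, each of the $p$ exceptional $-1$ spheres meeting $u_0$ becomes a $2$-handle attached to $B_{n+p}$ along a meridian of $u_0$. A successive handle-cancellation argument, analogous to that of part (2), identifies $B_{n+p}$ with the $p$ residual $2$-handles as $B_n$ together with the extra $-2$-chain; this is precisely the local model for $X_2$ obtained by first blowing down the $p$ exceptional spheres (shifting the leading framing to $-(n+2)$) and then rationally blowing down the resulting $C_n$ sub-configuration.

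The main obstacle is the Kirby-calculus bookkeeping: one must track framings accurately through each handle slide and $1$-handle/$2$-handle cancellation, and verify that the meridians in question project to generators of $\pi_1(B_m)=\mathbb{Z}_m$ under $\partial B_m\hookrightarrow B_m$. Both are standard continued-fraction computations in the lens spaces $L(m^2,m-1)$, but must be executed carefully to ensure the framing arithmetic closes and the iterative cancellation in part (1) indeed reproduces $B_n$ plus the residual $-2$-chain; the latter step is the delicate point because the single $1$-handle of $B_{n+p}$ can be cancelled only once, so the successive steps must be interleaved with handle slides that re-expose a new cancellable $1$-handle at each stage.
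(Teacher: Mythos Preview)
The paper states this proposition without proof; it is offered as a generalization of Gompf's observation that rationally blowing down a $-4$ sphere meeting a transverse $-1$ sphere is the same as ordinary blow-down of the $-1$ (this is the case $n=2$ of your part (2)). There is therefore no argument in the paper to compare your attempt against.

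Your Kirby-calculus strategy is the natural one and the outline for part~(2) is sound: the local identification $B_n\cup_\mu D\cong \nu(S^2_{-(n+1)})$ is indeed the crux, and the handle-cancellation in the standard one-1-handle/one-2-handle picture of $B_n$ is the right mechanism. One point to tighten: cancelling the 1-handle requires that $\mu$ link the dotted circle \emph{geometrically} once, not merely algebraically; in the explicit diagram of $B_n$ this can be arranged by an isotopy, but it should be drawn rather than inferred from the $\pi_1$ computation.

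For part~(1) there is a small mismatch you should address. The $p$ exceptional spheres meet $u_0$, so in $X_1$ the residual 2-handles sit on meridians of the \emph{leading} sphere of $C_{n+p}$, whereas in $X_2$ the leftover linear $(-2)$-chain (the last $p$ vertices of the original chain, after the leading square has been raised to $-(n+2)$ and the first $n-1$ vertices have been rationally blown down) is attached at the \emph{tail} end of $C_n$. So the iteration is not literally a repetition of part~(2); you need either a Kirby move relating the two ends of the chain, or a direct handlebody identification of
\[
B_{n+p}\ \cup\ \bigl(p\text{ 2-handles on meridians of }u_0\bigr)
\quad\text{with}\quad
B_n\ \cup\ \bigl(\text{linear plumbing of }p\text{ }(-2)\text{-spheres on a meridian of }u_{n-2}\bigr).
\]
Your closing remark that the single 1-handle of $B_{n+p}$ can be cancelled only once correctly flags where the work lies; the fix is to interleave slides so that each cancellation produces the next $(-2)$-sphere of the tail chain together with a smaller rational ball $B_{n+p-1},B_{n+p-2},\ldots$, rather than trying to cancel $p$ handles against one 1-handle.
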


\subsubsection{An example with $\pi_1=\mathbb Z_4$ and $c_1^2=2$}
\

We start with any elliptic fibration structure on $E(1)$ with at least eight disjoint $-1$ sections. The existence of such fibration follows from the fact that a generic elliptic fibration on $E(1)$ has $9$ disjoint $-1$ sections. We divide these sphere sections into two groups with $4$ spheres in each group. By performing the symplectic summing with $\mathbb{T}^2 \times \mathbb{S}^2$ along the braided torus $4(\mathbb{T}^2 \times pt)$, we glue four $-1$ sections in each group together to get two disjoint $-4$ spheres. By rationally blowing down these two $-4$ spheres, we obtain a symplectic $4$-manifold with $\pi_1=\mathbb Z_4$ and $c_1^2=2$. 

If we only rationally blow down one of the $-4$ sphere, we obtain another example with  $\pi_1=\mathbb Z_4$ and $c_1^2=1$. 

For the sake of completeness, we present the fundamental group computation in details. First, notice that each $-4$ spheres constructed above arises from two disjoint copies of $\mathbb{S}^2 \times pt$ of the trivial fibration on $\mathbb{S}^2 \times \mathbb{T}^2$ via summing the spheres. Hence any torus $T^2$ that descends from $pt \times \mathbb{T}^{2}$ will intersect each of these $-4$ spheres once. In what follows, we will show that there exist a singular $T^2$, consisting of a circular chain of two $-2$ spheres, and each sphere component of this singular $T^2$ intersects only one of these $-4$ spheres. This point will be important in our fundamental group computation. Let us now choose the $T^2$ such that it is disjoint from the braided torus $4(\mathbb{T}^2 \times pt)$. Notice that of the two standard homology generators of the torus, there is one homotopic to a first homology generator of the braided torus. Let us denote this generator by $a$. We are free to choose two circles $a_1$, $a_2$ on $T^2$ that represent the same homology class as $a$ and they separate $2$ intersection points. 
Recall that the braided torus is glued with $E(1)$ along a regular fiber of $E(1)$. We choose vanishing cycles for the circles on the fiber in $E(1)$ corresponding to $a_i$, $1\le i\le 2$. We will still call these curves $a_i$. Since we have $6$ singular fibers in the generic elliptic fibration of $E(1)$ corresponding to the class $a$, we can pick $2$ different singular fibers out of these $6$ singular fibers. We have $2$ disjoint vanishing cycles  intersecting the chosen fiber at $a_i$ respectively. Using these vanishing cycles and their corresponding vanishing discs, we construct two spheres $S(a_1, a_2)$ and $S(a_2, a_1)$ in the symplectic $4$-manifold $X$. Each of these spheres contains one and only one intersection points with $-4$ spheres as mentioned above. Hence the meridians of these two $-4$ spheres would be trivial in $\pi_1(X)$. Since $\pi_1(X)=\mathbb Z_4$ by Proposition \ref{sewing}, the fundamental group of the manifold after rationally blowing down one or two $-4$ spheres is still $\mathbb Z_4$.

\subsubsection{An example with $\pi_1=\mathbb Z_6$ and $c_1^2=3$}

Consider an elliptic fibration structure on $E(1)$ with one $I_4$ singularity, eight fishtail fibers and six disjoint $-1$ sphere sections. Notice the six sections can be chosen such that five of them intersect the fourth component of $I_4$, and the remaining sphere intersects the first component (as in construction 2 of Section 4.2). 

Next we perform a fiber sum with $\mathbb{T}^2 \times \mathbb{S}^2$ using the braided torus $6(\mathbb{T}^2 \times pt)$ to sew the six $-1$ sections together get a $-6$ sphere. Using the first and second $-2$ spheres of $I_4$, we obtain the configuration $C_{4}$ for our rational blowdown. By rationally blowing down the $C_4$ configuration, we obtain a symplectic $4$-manifold with $c_1^2=3$ and $\pi_1=\mathbb Z_6$.

\subsection{Constructions of symplectic $4$-manifolds with $b^+=3$ and $c_1^2>0$}

Using the same techniques, we can also construct symplectic $4$-manifolds with higher $b^+$ and cyclic fundamental groups. To illustrate this, we will mention three examples with $b^+=3$ and $c_1^2>0$. 

\subsubsection{An example with $\pi_1=\mathbb Z_2$ and $c_1^2=3$}

Let us start with an elliptic fibration structure on $E(1)$ with one $I_6$ singularity, two $I_2$ fibers, two fishtails, and six disjoint $-1$ sphere sections. These $-1$ sections can be chosen such that each of them intersect a different component of $I_6$ fiber. By performing the fiber of two copies of $E(1)$ along a regular fiber, we obtain an elliptic fibration structure on $E(2)$ with two $I_6$ fibers, four $I_2$ fibers, four fishtails, and six disjoint $-2$ spheres. Moreover, it is easy to see that each $-2$ sphere sections has two disjoint dual $-2$ spheres, arrising from the components of two $I_6$ fibers in $E(2)$. Let us group these six $-2$ sphere sections into three pairs. By performing fiber sum with $\mathbb{T}^2 \times \mathbb{S}^2$ along the braided torus $2(\mathbb{T}^2 \times pt)$, we obtain a minimal symplectic $4$-manifold with $b^+=3$, $c_1^2=0$ and $\pi_1=\mathbb Z_2$, which contains three disjoint $-4$ spheres. Moreover, it follows from the above discussion that each  $-4$ sphere has a dual $-2$ sphere. By rationally blowing these three $-4$ spheres, we obtain a minimal symplectic $4$-manifold with $b^+=3$, $c_1^2=3$ and $\pi_1=\mathbb Z_2$. 

\subsubsection{An example with $\pi_1=\mathbb Z_3$ and $c_1^2=3$}

We will again make use of the above elliptic fibration structure on $E(2)$ with two $I_6$ fibers, four $I_2$ fibers, four fishtails, and six disjoint $-2$ sphere sections. Let us group three of the six $-2$ sphere sections together. By performing fiber sum of $E(2)$ with $\mathbb{T}^2 \times \mathbb{S}^2$ along the braided torus $3(\mathbb{T}^2 \times pt)$, we obtain a minimal symplectic $4$-manifold with $b^+=3$, $c_1^2=0$ and $\pi_1=\mathbb Z_3$, which contains symplectic $-6$ sphere. Furthemore, using this $-6$ sphere and two consecutive $-2$ spheres of $I_6$ fiber, we obtain a configuration of $C_{4}$ for a rational blowdown.  Notice that the last $-2$ sphere of $C_4$ has a dual $-2$ sphere, which comes from the remaining spheres of $I_6$ fiber. By rationally blowing down $C_4$ configuration, we obtain a minimal symplectic $4$-manifold with $c_1^2=3$ and $\pi_1=\mathbb Z_3$.

 \subsubsection{An example with $\pi_1=\mathbb Z_2$ and $c_1^2=4$}
We start with an elliptic fibration structure on $E(2)$ with twenty four fishtail fibers and at least eight disjoint $-2$ sphere sections. Let us assume that half of the fishtail fibers have the vanishing cycles $a$ and $b$ in the notation given above.  We group the sections into four pairs. Performing fiber sum with $\mathbb{T}^2 \times \mathbb{S}^2$ along the braided torus $2(\mathbb{T}^2 \times pt)$ yields minimal symplectic 4-manifold $X$ with $\pi_1=\mathbb Z_2$ and $c_1^2=0$. Moreover, this symplectic $4$-manifold contains four disjoint symplectic $-4$ spheres.  After rationally blowing down all these spheres, we obtain a symplectic $4$-manifold with $b^+=3$, $c_1^2=4$ and $\pi_1=\mathbb Z_2$. For the sake of completeness, we explain the computation of the fundamental group in details. Notice that each $-4$ spheres arises from four different $S^2$ of the trivial fibration on $\mathbb{S}^2 \times \mathbb{T}^2$. If we pick a $T^2$ in that fibration, it will intersect each of these $S^2$ (and consequently $-4$ spheres mentioned above) once. Furthermore, this $T^2$ can be chosen in such a way that it is disjoint from the braided torus $2(\mathbb{T}^2 \times pt)$. Recall that of the two standard first homology generators of the torus, there is one homotopic to a first homology generator of the braided torus. We will denote this generator by $a$. We are free to choose four circles $a_1$, $a_2$, $a_3$, $a_4$ on $T^2$ each representing the homology class $a$ such that they separate $4$ intersection points. Notice that the braided torus in $\mathbb{T}^2 \times \mathbb{S}^2$ is glued with a regular fiber of an elliptic fibration on $E(2)$, and thus we choose vanishing cycles for the circles on the fiber in $E(2)$ corresponding to $a_i$, $1\le i\le 4$. We will still call these curves $a_i$. Since we have $12$ singular fibers in the generic elliptic fibration of $E(2)$ corresponding to the class $a$. Take $4$ different singular fibers out of these $12$, which will give us $4$ disjoint vanishing cycles intersecting the chosen fiber at $a_i$ respectively. Furthemore, using the given odering of these vanishing cycles and their corresponding vanishing discs, we  construct  four matching spheres $S(a_1, a_2)$, $S(a_2, a_3)$, $S(a_3, a_4)$, and $S(a_4, a_1)$ in the symplectic $4$-manifold $X$. Each of these spheres contains one and only one intersection points with $-4$ spheres as mentioned above. Hence the meridians of these four $-4$ spheres would be trivial in $\pi_1(X)$. Since $\pi_1(X)=\mathbb Z_2$, the fundamental group of the manifold after rationally blowing down the four $-4$ spheres is still $\mathbb Z_2$.




\section*{Acknowledgments} A. Akhmedov was partially supported by NSF grants FRG-1065955 and DMS-1005741. W. Zhang was partially supported by AMS-Simons travel grant. We would like to thank I. Dolgachev for helpful discussions. We are grateful to Tian-Jun Li for suggestions and for his kind encouragement. The authors are also thankful to Rafael Torres for his remarks and suggestions. Finally, we would like to thank the anonymous referees who gave us many constructive suggestions and valuable comments on the previous versions of this paper, which led to the improvement of the presentation of the material in the paper.

\end{document}